\title[]{Dirichlet and Neumann problems for planar domains with parameter}
\author[]{Florian Bertrand \and  Xianghong Gong}
 \address{Department of Mathematics,
 University of Wisconsin, Madison, WI 53706, U.S.A.}
 \email{bertrand@math.wisc.edu}\email{gong@math.wisc.edu}
 \keywords{Dirichlet and Neumann problems, Kellogg's theorem with parameter,
   domains with parameter, integral equations with parameter}
 \subjclass[2010]{31A10, 45B05, 30C35, 35B30}
  \thanks{Research of X.~Gong was  supported in part by NSF grant DMS-0705426.}
\newcommand{\dist}{\operatorname{dist}}
\newtheorem{thm}{Theorem}[section]
\newtheorem{cor}[thm]{Corollary}
\newtheorem{prop}[thm]{Proposition}
\newtheorem{lemma}[thm]{Lemma}
\theoremstyle{definition}
\newtheorem{rem}[thm]{Remark}
\renewcommand{\th}[1]{\begin{thm}\label{#1}}
\newcommand{\eth}{\end{thm}}
\newcommand{\co}[1]{\begin{cor}\label{#1}}
\newcommand{\eco}{\end{cor}}
\renewcommand{\le}[1]{\begin{lemma}\label{#1}}
\newcommand{\ele}{\end{lemma}}
\newcommand{\pr}[1]{\begin{prop}\label{#1}}
\newcommand{\epr}{\end{prop}}
\newcommand{\ga}{\begin{gather}}
\newcommand{\ega}{\end{gather}}
\newcommand{\gan}{\begin{gather*}}
\newcommand{\egan}{\end{gather*}}
\newcommand{\al}{\begin{align}}
\newcommand{\eal}{\end{align}}
\newcommand{\aln}{\begin{align*}}
\newcommand{\ealn}{\end{align*}}
\newcommand{\eq}[1]{\begin{equation}\label{#1}}
\newcommand{\eeq}{\end{equation}}
\newcommand{\pdoz}{\partial_{\overline z}}
\newcommand{\ci}{~\cite}
\newcommand{\f}[2]{\frac{#1}{#2}}
\newcommand{\df}{\overset{\text{def}}{=\!\!=}}
\newcommand{\D}{\mathbb{D}}
\newcommand{\cc}{{\bf C}}
\newcommand{\rr}{{\bf R}}
\newcommand{\ov}{\overline}
\newcommand{\RE}{\operatorname{Re}}
\newcommand{\IM}{\operatorname{Im}}
\newcommand{\jq}[1]{<\!#1\!>}
\newcommand{\cL}{\mathcal}
\newcommand{\all}{\alpha}
\newcommand{\gaa}{\gamma}
\newcommand{\Gaa}{\Gamma}
\newcommand{\del}{\delta}
\newcommand{\Del}{\Delta}
\newcommand{\var}{\varphi}
\newcommand{\e}{\epsilon}
\newcommand{\om}{\omega}
\newcommand{\Om}{\Omega}
\newcommand{\la}{\lambda}
\newcommand{\ta}{\tau}
\newcommand{\pd}{\partial}
\newcommand{\yt}{\frac{1}{2}}
\newcommand{\re}[1]{(\ref{#1})}
\newcommand{\rea}[1]{$(\ref{#1})$}
\newcommand{\rl}[1]{Lemma~\ref{#1}}
\newcommand{\nrc}[1]{Corollary~\ref{#1}}
\newcommand{\rp}[1]{Proposition~\ref{#1}}
\newcommand{\rt}[1]{Theorem~\ref{#1}}
\newcommand{\rla}[1]{Lemma~$\ref{#1}$}
\newcommand{\rpa}[1]{Proposition~$\ref{#1}$}
\newcommand{\supp}{\operatorname{supp}}
\newcounter{pp}
\newcommand{\bpp}{\begin{list}{$\hspace{-1em}\alph{pp})$}{\usecounter{pp}}}
\newcommand{\epp}{\end{list}}
\newcounter{ppp}
\newcommand{\bppp}{\begin{list}{$\hspace{-1em}(\roman{ppp})$}{\usecounter{ppp}}}
\newcommand{\eppp}{\end{list}}
\begin{document}
\begin{abstract}
Let $\Gamma(\cdot,\lambda)$ be smooth, i.e.\, $\mathcal C^\infty$,
 embeddings from $\overline{\Omega}$ onto $\overline{\Omega^{\lambda}}$, where
$\Omega$ and $\Omega^\lambda$ are bounded domains with smooth boundary in the complex plane
and $\lambda$ varies in $I=[0,1]$.
 Suppose that $\Gamma$ is smooth on $\overline\Omega\times I$ and $f$ is a
  smooth function on $\partial\Omega\times I$. Let $u(\cdot,\lambda)$ be the harmonic functions on $\Omega^\lambda$ with
  boundary values $f(\cdot,\lambda)$. We show that $u(\Gamma(z,\lambda),\lambda)$ is
smooth on $\overline\Omega\times I$.  Our main result is proved for suitable H\"older spaces
for  the Dirichlet and Neumann problems with parameter. By observing that the regularity of solutions
of the two problems with parameter is not local, we show the existence of   smooth
embeddings $\Gamma(\cdot,\lambda)$ from $\overline{\mathbb D}$,
the closure of the unit disc, onto $\overline{\Omega^\lambda}$
such that $\Gamma$ is smooth on $\overline{\mathbb D}\times I$
and real analytic at $(\sqrt{-1},0)\in\overline{\mathbb D}\times I$, but for every family
of Riemann mappings $R(\cdot,\lambda)$
from $\overline{\Omega^\lambda}$ onto $\overline{\mathbb D}$, the function $R(\Gamma(z,\lambda),\lambda)$
is not real analytic at $(\sqrt{-1},0)\in\overline{\mathbb D}\times I$.
\end{abstract}

%{\hfill May 28, 2011}

 \maketitle

%\tableofcontents

\setcounter{section}{0}
\setcounter{thm}{0}\setcounter{equation}{0}
\section{Introduction}\label{sec1}

Let $k\geq0$ be an integer and $0<\all<1$.  Let $\Om^\lambda\ (0\leq\la\leq1)$ be
a   family of bounded domains in $\cc$ of $\cL C^{k+1+\all}$ boundary.
Let $f^\la$ and $g^\la$ be $\cL C^\all$ functions on $\pd\Om^\la$.
We   consider the   Dirichlet problem with parameter
\eq{dpb}
 \Delta u^\la=0 \quad \text{on $\Om^\lambda$}, \qquad
      u^\la=f^\la \quad \text{on $\pd\Om^\lambda$.}
      \end{equation}
By analogy,  the Neumann problem with parameter is
\eq{npb}
 \Delta v^\la=0 \quad \text{on $\Om^\lambda$}, \qquad
     \pd_{\nu^\la} v^\la=g^\la \quad \text{on $\pd\Om^\lambda$}.
      \end{equation}
Here $\Del$ is the Laplacian and  $\nu^\la$ is
the outer unit normal vector of $\pd\Om^\la$. For the existence and uniqueness
 of solutions $v^\la$, we impose conditions
\eq{npb+}
\int_{\pd\Om^\la}g^\la\, d\sigma^\la=0,\qquad\int_{\pd\Om^\la}v^\la\, d\sigma^\la=0
      \end{equation}
with $d\sigma^\la$  being  the arc-length element of $\pd\Om^\la$.
We are interested in the regularity of
solutions $u^\la,v^\la$ in the parameter $\la$. To state our results, we first define two H\"older spaces.
Let integers $k,j$ satisfy  $k\geq j\geq0$. By an element $\{u^\la\}$ in
 $\cL C^{k+\all,j}(\pd\Om)$ (resp.\,$\cL C^{k+\all,j}(\ov\Om)$) we   mean
 a family of functions $u^\la$ on $\pd\Om$ (resp.\,$\ov\Om$) such
that,  for every integer $i$ with  $0\leq i\leq j$,  $\la\to\pd_\la^iu^\la$ is a continuous map from $[0,1]$ into $\cL C^{k-i+\all}(\pd\Om)$
(resp.\,$\cL C^{k-i+\all,j}(\ov\Om)$).
We will prove the following.
\pr{dpp} Let non negative integers $l, k$ and $ j$   satisfy
 $k\geq j$ and $k+1\geq l\geq j$. Let   $0<\all<1$.
Let $\Om$ be  a bounded domain in $\cc$ with $\cL C^{k+1+\all}$ boundary.
 Let $\Gaa^\la$ $(\la\in[0,1])$ embed $\ov\Om$ onto $\ov{\Om^\la}(\subset\cc)$ with
  $\{\Gaa^\la\}$ in $\cL C^{k+1+\all,j}(\ov\Om)$.
Assume that $ f^\la$ and $ g^\la$    are   functions on $\pd\Om^\la$ such that
 $\{f^\la\circ\Gaa^\la\}$ is in $\cL C^{l+\all,j}(\pd\Om)$ and
  $ \{g^\la\circ\Gaa^\la\}$ is in $\cL C^{k+\all,j}(\pd\Om)$.
For each $\la$, let $u^\la\in\cL C^\all(\ov{\Om^\la})$ be the unique solution  to \rea{dpb}
and let $v^\la\in\cL C^{1}(\ov{\Om^\la})$ be the unique
  solution
  to \rea{npb}-\rea{npb+}. Then $\{u^\la
  \circ\Gaa^\la\}$ is in $\cL C^{l+\all,j}(\ov\Om)$ and
    $\{v^\la\circ\Gaa^\la\}$ is in $\cL C^{k+1+\all,j}(\ov\Om)$.
\epr
We observe that
if a function $u$ is harmonic on the unit disc $\D$ and is continuous on $\ov\D$, then
  the product $b(\la)u(z)$ for a function $b$ on $[0,1]$  is still harmonic on
$\D$.
Thus, even if $b u$ is    real analytic
near a point $(p,0)\in\pd\ov\D\times[0,1]$, $b u$ might not be $\cL C^1$
near the same point   $(p,0)\in\ov\D\times[0,1]$. Such an example is provided, when
  $u|_{\pd\D}$ vanishes near $p$ but is not identically zero
 and $b$ is continuous on $[0,1]$ but not differentiable at $0$.
Therefore, the
regularity of solutions for the Dirichlet problem with parameter
 is not a local property.  By contrast, the harmonic function  $u$ must be $\cL C^\omega$ near $p\in\ov\D$
   when $u|_{\pd\D}$ is $\cL C^\omega$ near $p\in\pd\D$. The observation leads us to   demonstrate
 the failure of the local Schwarz reflection principle with parameter by the following result.
\th{nrefl} There are embeddings $\Gaa(\cdot,\la)$ from $\ov\D$ onto $\ov{\Om^\la}$ such that $\Gaa$ is
$C^{\infty}$  on $E=
\ov\D\times[0,1]$ and real analytic at $(1,0)\in E$, but
 $R(\Gaa(z,\la),\la)$ is not real analytic  at $(1,0)\in E$
 for every family of Riemann mappings $R(\cdot,\la)$
from $\ov{\Om^\la}$ onto $\ov\D$.
\eth
The existences of solutions $u^\la, v^\la$  in \rp{dpp}  are  classical results;
see  Kellogg~\ci{Keontw} for
the Dirichlet problem and Miranda
  \cite{Mirseze} (p.~84) for work of Giraud on the Neumann problem. For higher dimensional
Dirichlet problem, see Gilbarg-Trudinger (\cite{GTzeon}, p. 211, Theorem 8.34).
  The reader is referred to \cite{Mirseze} for extensive references.
We will use the   Fredholm theory on compact integral operators.
Of course, the compactness of the integral operators  is valid
when    the parameter is fixed and it will play important rules in our arguments, although
there is no compactness  when all variables are considered.
With some modifications, we will follow  Kellogg's approach to   the Dirichlet problem
(\cite{Kezeei}--\cite{Ketwni}).
For instance, by constructing  a
   second resolvent,  Kellogg proved  the $\cL C^{1+\all'}$-regularity  of the solutions
to the Dirichlet problem for $\cL C^{1+\all}$ boundary (\cite{Kezeeib}). Instead, we will
obtain the  regularity of solutions to
the Dirichlet problem via the integral equations associated to
 the Neumann problem.    The reduction
can be achieved, because solving Dirichlet problem
on a simply connected planar domain
can be reduced to finding a harmonic conjugate of the solution. We do
not meet   difficulties in the reduction
for multi-connected domains.
Using
the Cauchy transform, we will also refine
   Kellogg's original arguments to recover a loss of regularity.
We   mention that  Courant proved a version
of Carath\'eodory's Riemann mapping theorem
for variable Jordan domains (see~\ci{Tsfini}, p.~383).
  Courant's theorem implies the  continuous, i.e. $\cL C^0$,  dependence of solutions to the Dirichlet problems
 for Jordan domains with parameter.
    One of  applications of
solutions of the planar Dirichlet problem is Kellogg's theorem on the
boundary regularity of Riemann mappings for Jordan domains of $\cL C^{1+\all}$
boundary~\ci{Kezeeib}.    Warschawski proved the sharp version
of Kellogg's Riemann mapping theorem for Jordan domains of
$\cL C^{k+\all}$ boundary for all $k>0$ (\cite{Wathtw},   \cite{Wathtwb});
see also
Pommerenke~\ci{Ponitw} (p.~49). As an immediate consequence of \rp{dpp} we
 get a parameter version of Kellogg's Riemann mapping
theorem in \nrc{krm}.

The paper is organized as follows.

In section~\ref{sec2}, we define various H\"older spaces for domains and
functions with parameter. We     discuss the
 dependence of functions spaces on the parameterizations of domains
 and their boundaries.
Section~\ref{sec3} contains some standard estimates on   Cauchy transform (see Vekua~\cite{Vesitw}).
We present details as the arguments are used in the   parameter case.
In section~\ref{sec4}, we refine Kellogg's estimates on kernels for
the integral equations; lacking a
  reference to the sharp regularity    on solutions to
   the integral equations,  we  provide  some details.
      These arguments are generalized in section~\ref{sec5}
for the parameter case. In section~\ref{sec6}, after collecting
 results about compact operators for the Dirichlet and Neumann problems,  we deduce
 the $\cL C^1$ regularity of solutions of the integral equation
 for the Dirichlet problem in \rl{klem}.

Section~\ref{sec7} consists of our main results about the regularity
of solutions of integral equations
with parameter. For the proofs, we   differentiate integral equations
and orthogonal projections   onto  the null spaces of $I\pm \cL K^\la$ and $I\pm \cL K^{\la *}$
and we then derive estimates by using the compactness of integral operators
$\cL K^\la$ and $\cL K^{\la*}$ for fixed parameter $\la$.
In section~\ref{sec8}, we thoroughly discuss the H\"older spaces  defined
in section~\ref{sec2} before
 we define the spaces for exterior domains with parameter.
In section~\ref{sec9}, we solve the real analytic integral equations for the Dirichlet
and Neumann problems
with a real analytic parameter.
Our main results, theorems \ref{dnpb} and   \ref{nrefl},  are  proved in section~\ref{sec9}.
\rp{dpp} is contained in \rt{dnpb}.

Note that when domains $\Om^\la$ are  fixed and only the boundary values
vary with a parameter,
our results  essentially follow
from the solutions of   Dirichlet and Neumann problems without parameter.
Furthermore, the results hold for
general H\"older spaces with parameter (see the remark at the end of section~\ref{sec9}).
With   H\"older spaces  to be defined in section~\ref{sec2}, we conclude the
 introduction with the following open problem:

 \smallskip

\noindent{\bf Problem.}  Let $k,l, j$ be non negative integers.
Let $l\leq k+1$ and  $0<\all<1$. Let $\Gaa^\la$ embed
$\ov\Om$ onto $\Om^\la$, where $\Om$ and $\Om^\la$ are bounded domains in $\cc$.
 Let $u^\la\in\cL C^0(\ov{\Om^\la})$ be harmonic functions on
 $\Om^\la$. Suppose that  $\pd\Om\in
\cL C^{k+1+\all}$, $\Gaa\in\cL C_*^{k+1+\all,j}(\ov\Om)$,   and $\{u^\la\circ\Gaa^\la\}\in
\cL C_*^{l+\all,j}(\pd\Om)$. Is $\{u^\la\circ\Gaa^\la\}$ in
 $\cL C_*^{l+\all,j}(\ov\Om)$ for $j>0$?

\setcounter{thm}{0}\setcounter{equation}{0}
\section{H\"older spaces for interior domains  with parameter
}\label{sec2}

To deal with the Dirichlet problem with parameter, we will introduce
two types of H\"older spaces   with parameter,  ${\cL C}^{k+\all, j}(\ov\Om_\Gaa)$
 and ${\cL B}^{k+\all,j}(\ov\Om_\Gaa)$. Both are  suitable for the formulation and
 proofs of our  results.
 In this paper the parameter $\la$ will be in $[0,1]$, unless it is restricted to a subinterval.

We first define spaces when a domain is fixed.
 Let $k,j$ be non-negative integers  and let $0\leq\all<1$. Let $\Om$ be a bounded domain in $\cc$.
Let $\cL C^{k+\all}(\ov\Om)$ be the standard H\"older spaces with norm
$|\cdot|_{k+\all}$ on $\ov\Om$.
Let $u^\la$ be a family of functions on $\ov\Om$. We say that
  $\{u^\la\}$ belongs to ${\cL B}_*^{k+\all, j}(\ov\Om)$, abbreviated by $u=\{u^\la\}
  \in{\cL B}_*^{k+\all, j}(\ov\Om)$,
    if
$\la\to\pd_\lambda^iu^\lambda$ maps
$[0,1]$ {\it continuously}   into $\cL C^{k}(\ov\Om)$
and   {\it boundedly}  into $\cL C^{k+\all}(\ov\Om)$ for each $i$ with $0\leq i\leq j$.
We say $\{u^\lambda\}\in
{\cL C}_*^{k+\all, j}(\ov\Om)$, if $\pd_\lambda^iu^\lambda$ maps  $[0,1]$  {\it continuously}
 into $\cL C^{k+\all}(\ov\Om)$ for $0\leq i\leq j$. We define
   ${\cL B}_*^{k+\all, j}(\pd\Om)$
and its subspace  ${\cL C}_*^{k+\all, j}(\pd\Om)$    by substituting  $\Om$ with
$\pd\Om\in\cL C^{k+\all}\cap \cL C^1$ in the above expressions.

Next, we define spaces on domains with parameter.
Let $\Gamma^\lambda$   ($0\leq\la\leq1$) be a family of $\cL C^1$ embeddings from
 $\ov\Om$  onto $\ov{\Om^\lambda}$,
and let $\gamma^\lambda$   ($0\leq\la\leq1$) be a family of $\cL C^1$ embeddings
from $\pd\Om$  onto $\pd{\Om^\lambda}$.
Suppose that $u^\lambda$ is a family of functions on $\ov{\Om^\lambda}$ or on $\pd\Om^\la$.
 Define the following:
\begin{itemize}
\item $\{u^\lambda\}\in{\cL B}_*^{k+\all,j}(\ov\Omega_\Gamma)$,
if $\{u^\lambda\circ\Gamma^\lambda\}\in {\cL B}_*^{k+\all,j}(\ov{\Om})$;
\item
 $\{u^\lambda\}\in{\cL C}_*^{k+\all,j}(\ov\Omega_\Gamma)$,
if  $\{u^\lambda\circ\Gamma^\lambda\}\in {\cL C}_*^{k+\all,j}(\ov{\Om})$;
\item $\{u^\la\}\in\cL B_*^{k+\all,j}(\pd\Om_\gaa)$, if
$\{u^\la\circ\gaa^\la\}\in\cL B_*^{k+\all,j}(\pd\Om)$;
\item  $\{u^\la\}\in\cL C_*^{k+\all,j}(\pd\Om_\gaa)$, if
$\{u^\la\circ\gaa^\la\}\in\cL C_*^{k+\all,j}(\pd\Om)$.
\end{itemize}
For  integers $k\geq j\geq0$, define
\gan
\cL B^{k+\all,j}(\ov\Om_\Gaa)= \cap_{i=0}^j{\cL B}_*^{k-i+\all, i}(\ov\Om_\Gaa),\quad
\cL C^{k+\all,j}(\ov\Omega_\Gamma)= \cap_{i=0}^j{\cL C}_*^{k-i+\all, i}(\ov\Omega_\Gamma).
%,\\
%\cL B^{k+\all,j}(\pd\Om_\gaa)= \bigcap_{i=0}^j{\cL B}_*^{k-i+\all, i}(\pd\Om_\gaa),\quad
%\cL C^{k+\all,j}(\pd\Om_\gaa)= \bigcap_{i=0}^j{\cL C}_*^{k-i+\all, i}(\pd\Om_\gaa).
\end{gather*}
Substituting $\ov\Om_\Gaa$ with $\pd\Om_\gaa$ in the above identities, we define
 $\cL B^{k+\all,j}(\pd\Om_\gaa)$ and $\cL C^{k+\all,j}(\pd\Om_\gaa)$;
  dropping the subscripts
 $\Gaa$ and $\gaa$ from the above identities, we   define   $\cL B^{k+\all,j}(\ov\Om)$,
$\cL C^{k+\all,j}(\ov\Omega)$,
$\cL B^{k+\all,j}(\pd\Om)$ and
$\cL C^{k+\all,j}(\pd\Om)$, respectively.
 The norms on these   spaces are   defined and abbreviated as follows:
 \ga
 \label{ukaj}
|u|_{k+\all,j }=\sup_{  0\leq i\leq j,  \lambda\in[0,1]}
\{|\pd_\lambda^iu^\lambda |_{k+\all}\},\quad\text{if $u\in\cL B_*^{k+\all,j}(\pd\Om)$
or $\cL B_*^{k+\all,j}(\ov\Om)$},
\\
 |u|_{k+\all,j}=|\{u^\lambda\circ\Gamma^\lambda\}|_{k+\all,j},\quad  |u|_{k+\all,j}
 =|\{u^\lambda\circ\gamma^\lambda\}|_{k+\all,j},\\
 \label{norm2}
 \|u\|_{k+\all,j}=\max\{|u|_{k-i+\all, i}\colon 0\leq i\leq j\}, \quad j\leq k.
 \end{gather}
The definition  of
spaces $\cL B_*^{k+\all,j}(\pd\Om)$  requires $\pd\Om\in\cL C^{k+\all}\cap\cL C^1$
implicitly.
Throughout the paper, we assume that $\ov\Om$ is bounded,   $\pd\Om\in\cL C^1$,
 $\Gaa\in\cL C^{1,0}(\ov\Om)$ and $\gaa\in\cL C^{1,0}(\pd\Om)$.
 For $X=\pd\Om$ or $\ov\Om$ and $0\leq j,k\leq\infty$,
 define $\cL B_*^{k+\all,j}(X)=
\cap_{l<k+1,i<j+1}^\infty\cL B_*^{l+\all,i}(X)$. For $j\leq k\leq\infty$, define
$\cL B^{k+\all,j}(X)=\cap_{i\leq l<k+1, i<j+1}^\infty\cL B^{l+\all,i}(X)$.
 Define analogous spaces by
replacing $\cL B$ and $\cL B_*$
with  $\cL C$ and $\cL C_*$, respectively.

Having defined the spaces, we now   briefly discuss how they depend on the embeddings.
 We first need a fact to change the order of differentiation. Let $\pd_i=\pd_{x_i}$ be
 derivatives on $\rr^n$.
\le{comcs} Let $f$ be a continuous function defined on an open subset $\Om$ of $\rr^n$. Assume
that on $\Om$, $\pd_{i_1}\cdots\pd_{i_k}f=g$ is continuous and
 $\pd_{i_{j_1}}\cdots\pd_{i_{j_l}}f$
are continuous for all $1\leq j_1<\cdots<j_l\leq k$. Then $\pd_{i'_1}\cdots\pd_{i'_k}f$
exists
and equals $g$, where $\pd_{i'_1}\cdots\pd_{i'_k}$ is a change of order of
 $\pd_{i_1}\cdots\pd_{i_k}$.
\ele
\begin{proof} Let $\chi$ be any smooth function with compact support in $\Om$.
Replace $f$ by  $\chi f$. Then $f$
satisfies the same hypotheses  and it suffices to verify the assertion for
  the new $f$. Assume that $\supp f\subset
(a,\infty)^n$ for a finite $a$. Let $X$ be the set
of continuous functions on $\rr^n$ with support in $(a,\infty)^n$.
Define $\cL I_i\colon X\to X$ by
\gan
\cL I_i\phi(x)=\int_{a}^{x_i}\phi(x_1,\ldots,x_{i-1}, t,x_{i+1},\ldots, x_n)\, dt.
\end{gather*}
Then  $\cL I_i\cL I_j=\cL I_j\cL I_i$ on $X$. Also, $\pd_i\cL I_if=f=\cL I_i\pd_if$
if $f$ and $\pd_if$ are in $X$.
Now
$
f=\cL I_{i_k}\cdots\cL I_{i_1}g=\cL I_{i_k'}\cdots\cL I_{i_1'}g,
$
which yields $\pd_{i_1'}\cdots\pd_{i_k'}f=g$.
\end{proof}
The above lemma shows that $\pd_{x,y,\la}^L=\pd_{x,y}^I\pd_\la^j$ holds on
 $\cL C_*^{k,j}(\ov\Om)$,
if
$|I|=k$ and $\pd_{x,y,\la}^L$ is obtained from $\pd_{x,y}^I\pd_\la^j$
by   changing
the order of differentiation. Also $\pd_{\ta,\la}^L=\pd_\ta^k\pd_\la^j$
on $\cL C_*^{k,j}(\pd\Om)$, if $\pd_{\ta,\la}^L$ is a change of order of  $\pd_\ta^k\pd_\la^j$.
 \le{chainp} Let $\ov\Om$ be a bounded domain with $\pd\Om\in\cL C^{k+\all}\cap \cL C^1$.
 Let $\Gamma_1^\lambda$ and $
\Gamma_2^\lambda$ embed  $\ov\Om$   onto
$\ov{\Om^\lambda}$. Let $\gamma_1^\lambda$ and $
\gamma_2^\lambda$ embed $\pd\Om$   onto
$\pd{\Om^\lambda}$.
\bppp
\item
A $\cL C^1$ mapping from $\ov{\Om}$ into $\ov{\Om_1}$  pulls back $\cL C^{\all,0}(\ov{\Om_1})$
and $\cL B^{\all,0}(\ov{\Om_1})$ into $\cL C^{\all,0}(\ov\Om)$
and
 $\cL B^{\all,0}(\ov\Om)$, respectively.
\item Let $\var^\la$
map   $\ov\Om$ into an open subset $D$ of $\rr^n$.
If $F$ is a function in $\cL C^1(D)$ and
$\var\in\cL B^{\all,0}(\ov\Om)$, then
 $\{F\circ\var^\la\}\in\cL B^{\all,0}(\ov\Om)$.
 If $F\in\cL C^2(D)$ and $\var\in\cL C^{\all,0}(\ov\Om)$, then
  $\{F\circ\var^\la\}\in\cL C^{\all,0}(\ov\Om)$.
\item If $\Gaa_i\in\cL B^{k+\all,j}(\ov\Om)\cap\cL C^{1,0}(\ov\Om)$, then $
{\cL B}^{k+\all,j}(\ov\Om_{\Gamma_1})= {\cL
B}^{k+\all,j}(\ov\Om_{\Gamma_2})$.
\item Let $\all>0$.
If $(\Gaa_1^\lambda)^{-1}\Gaa_2^\lambda$ are independent of $\lambda$
and $\Gaa_i$ are in $\cL C^{k+\all,j}(\ov\Om)\cap\cL C^{1,0}(\ov\Om)$, then $
{\cL C}^{k+\all,j}(\ov\Om_{\Gamma_1})= {\cL
C}^{k+\all,j}(\ov\Om_{\Gamma_2})$.
\eppp
The   assertions in $(i)$-- $(iv)$ remain true   if $\pd\Om$,
$\pd\Om_1$, $\pd\Om^\la$,   and $\gaa_i$ substitute for $\ov\Om$,
$\ov{\Om_1}$, $\ov{\Om^\la}$,   and
$\Gaa_i$, respectively.
The  identical spaces in $(iii)$ and $(iv)$  have equivalent norms.
\ele
\begin{proof} (i). Since $\pd\Om\in\cL C^1$, then $|\var(z_2)-\var(z_1)|
\leq C|z_2-z_1|$ if $\var\in\cL C^1(\ov\Om)$ or
 $\cL C^1(\pd\Om)$.
The assertions  follow immediately from the definition of the spaces.

(ii). We take a bounded open subset $D'$ of $D$ such that
$\ov{D'}$ has piecewise smooth boundary and contains  ranges of all $\var^\la$. Since $F$
is $\cL C^1$, then $F$ is Lipschitz on $\ov{D'}$. It is easy to check
that $\{F\circ\var^\la\}$ is in $\cL B^{\all,0}$. Assume now that $F\in\cL C^2$. We already
know that $\{F\circ\var^\la\}$ is in $\cL B^{\all,0}$.
Without loss of generality, we may assume that $|\la_2-\la_1|$ is so small   that the range
of  $t\var^{\la_2}+(1-t)\var^{\la_1}$ for $0\leq t\leq1$ is contained in
 $D'$. Then $\nabla F$ is Lipschitz on $\ov{D'}$.
Write
$$
(F(\var^{\la_2})-F(\var^{\la_1}))(x)=(\var^{\la_2}-\var^{\la_1})(x)
\cdot\int_0^1(\nabla F)((t\var^{\la_2}+(1-t)\var^{\la_1})(x))
\, dt.
$$
We obtain $|F(\var^{\la_2})-F(\var^{\la_1})|_\all\leq C|\var^{\la_2}-\var^{\la_1}|_\all
(1+|\var^{\la_2}|_\all+|\var^{\la_1}|_\all)$. Hence, $\{F\circ\var^\la\}\in C^{\all,0}$.

(iii).  Let
$\Gaa_1^{\lambda}\circ \Gaa_{12}^\lambda=\Gaa_2^\lambda$.
Since $\Gamma^\lambda_i$ are   embeddings with
$\Gamma_i \in\cL C^{1,0}(\ov\Om)$, we have
\eq{cta}\nonumber
|\zeta-z|/{C}\leq|\Gamma_{12}^{\lambda}(\zeta)- \Gamma_{12}^{\lambda}(z)|\leq C|\zeta-z|.
\end{equation}
Note that on $\cL C^{k,j}(\ov\Om)$
 all mixed derivatives of order $k-j$ in $x,y$ and of order $j$ in $\la$
can be written as $\pd^{k-j-a}_x\pd_{y}^a\pd_\la^j$. Abbreviate   the latter derivatives
as a set
by $\pd^{k-j}\pd_\la^j$. For a later purpose of expressing a commutator, it will be convenient to
write first-order derivatives of a function in a column vector. So let us form
the Jacobean matrix $(\Gamma_i^\lambda)'$ of the (real) map $\Gamma_i^\lambda$ in such a way.
Then the chain rule takes the form
  $$(\Gamma_2^\lambda)'=(\Gamma_{12}^{\lambda})'(\Gamma_1^\lambda)'
  \circ\Gamma_{12}^{\lambda},\quad
  \pd_\lambda
\Gamma_2^\lambda=(\pd_\lambda\Gamma_1^{\lambda})\circ\Gamma_{12}^\lambda
  +\pd_\lambda\Gamma_{12}^{\lambda}(\Gamma_1^\lambda)'\circ\Gamma_{12}^{\lambda}.
  $$
We solve for $\pd\Gaa_{12}^\la$ and $\pd_\la\Gaa_{12}^\la$;  in general, for $k\geq j$,
we express $\pd^{k-j}\pd_\lambda^j\Gaa_{12}^\lambda$ as a
polynomial in % entries of  matrices
\ga  \label{det1}  [\det (\Gamma_1^\lambda)'\circ\Gamma_{12}^{\lambda}]^{-1},\quad
\pd^{a} \pd_\lambda^{b}\Gamma_2^\lambda,\quad
(\pd^{a} \pd_\lambda^{b}\Gamma_1^\lambda)\circ\Gamma_{12}^{\lambda},   \quad a+b\leq k, b\leq j.
\end{gather}
To repeat the above computation for $\gaa_i^\la$, let $(\gaa_1^\la)
\circ\gaa_{12}^\la=\gaa_2^\la$. Then
\eq{det12}\begin{gathered}
\pd_\ta\gaa^\la_2(z)=\pd_\ta\gaa^\la_{12}(\pd_\ta\gaa^\la_1)\circ\gaa^\la_{12},\\
\pd_\la\gaa^\la_2(z)=(\pd_\la\gaa^\la_1)\circ\gaa^\la_{12}
+\pd_\la\gaa^\la_{12}(\pd_\ta\gaa_1)\circ
\gaa_{12}^\la(z).
\end{gathered}
\eeq
Hence
 $\pd_\ta^{k-j}\pd_\la^j\gaa_{12}^\la$
is a polynomial in
$$
[(\pd_\ta\gaa_1^\la)\circ\gaa_{12}^\la]^{-1},\quad\pd_\ta^b\pd_\la^b\gaa_2^\la,
\quad (\pd_\ta^a\pd_\la^b\gaa_1^\la)\circ\gaa_{12}^\la, \quad a+b\leq k,\ b\leq j.
$$

Assume that $\Gaa_1,\Gaa_2$ are in $\cL B^{k+\all,j}$. Then
functions in \re{det1}   are in $\cL B^{\all,0}$, so $\Gaa_{12}\in\cL B^{k+\all,j}(\ov\Om)$.
For  $u\in \cL B^{k+\all,j}(\ov\Om_{\Gaa_1})$, we express
$\pd^{k-j}\pd_\lambda^j(u^\lambda\circ\Gaa_2^\lambda)$ as a linear combination of
$(\pd^{a_1}\pd_\lambda^{b_1}(u^\lambda\circ\Gaa_1^\lambda))
\circ\Gaa_{12}^\la\in\cL B^{\all,0}$, whose coefficients are polynomials in entries of
\re{det1}. Here we replace the $(a,b)$ in \re{det1}   by $(a_2,b_2)$; also
$a_i+b_i\leq k$ and $ b_1+b_2\leq j$. Therefore, $u\in\cL B^{k+\all,j}(\ov\Om_{\Gaa_2})$.
Assume now that $u\in \cL B^{k+\all,j}(\pd\Om_{\gaa_1})$. Then
 $\pd_\ta^{k-j}\pd_\la^j(u^\la\circ\gaa_2^\la)$
is a linear combination in $\pd_\ta^{a_1}\pd_\la^{b_1}(u^\la\circ\gaa_2^\la)$, whose
coefficients are polynomials in \re{det12} with  $(a,b)$ being replaced by $(a_2,b_2)$. Here
$a_i+b_i\leq k$ and $b_1+b_2\leq j$. Thus, we get $u\in \cL B^{k+\all,j}(\pd\Om_{\gaa_2})$.

(iv). Assume that $\Gaa_1,\Gaa_2$ are in $\cL C^{k+\all,j}(\ov\Om)$.
By the independence of $\Gaa_{12}^\lambda\equiv \Gaa_{12}$ in $\lambda$
and (i), we know that all functions in \re{det1} are in $\cL C^{\all,0}(\ov\Om)$.
Furthermore,
\aln|(\pd^{a_1}\pd_{\mu}^{b_1}(u^{\mu}\circ\Gaa_1^{\mu}))\circ\Gaa_{12}&-
(\pd^{a_1}\pd_\lambda^{b_1}(u^\lambda\circ\Gaa_1^\lambda))\circ\Gaa_{12}|_\all\\
& \leq
C|\pd^{a_1}\pd_{\mu}^{b_1}(u^{\mu}\circ\Gaa_1^{\mu})-
\pd^{a_1}\pd_\lambda^{b_1}(u^\lambda\circ\Gaa_1^\lambda)|_\all.
\end{align*}
Let $u\in\cL C^{k+\all,j}(\ov\Om_{\Gaa_1})$. The above inequality shows
that  $(\pd^{a_1}\pd_\lambda^{b_1}(u^\lambda\circ\Gaa_1^\lambda))\circ\Gaa_{12}^\la$ are
in $\cL C^{\all,0}(\ov\Om)$. By (ii), the latter is closed under addition,
 multiplication, and division
(for non-vanishing denominator); hence, $u$ is
in $\cL C^{k+\all,j}(\ov\Om_{\Gaa_2})$. By analogy, we can verify that
$\cL C^{k+\all,j}(\pd\Om_{\gaa_1})=\cL C^{k+\all,j}(\pd\Om_{\gaa_2})$.
For (iii) and (iv), the equivalence of norms  is easy to verify, too.
\end{proof}

We now set up some notation to be used throughout the paper.

We   assume that $\Om$ and $\Om^\la$ are bounded domains of at least   $\cL C^1$ boundaries.
We denote by $\gamma_0$ the outer boundary of $\Om$ and by $\gamma_1,\ldots,
\gamma_m$ the connected components of its inner boundary.
 Without loss of generality, we  choose the
 standard orientation for $\pd\Om$
 and $\pd\Om^\la$ and  assume that $\cL C^1$ embeddings $\gaa^\la\colon\pd\Om\to\pd\Om^\la$
preserve the orientation and send outer boundary to outer boundary.
Denote by $\tau$ and $\tau^\la$  the unit tangential vectors of $\pd\Om$ and $
 \pd\Om^\la$ that agree with  the orientation, and by $\nu$ and $\nu^\la$
 the outer unit normal vectors of $\pd\Om$
 and $\pd\Om^\la$. The arc-length elements on $\pd\Om$ and $\pd\Om^\la$
 are denoted by $d\sigma$ and $d\sigma^\la$, respectively.
 Sometimes,
 we parameterize $\pd\Om$ by $\gaa(t)$
 in arc-length such that $dt$
 agrees with the orientation of $\pd\Om$, and we
  regard $\ta_z$ and $\gaa'(t)$ as   complex numbers instead of vectors.
With the above notation, on
$\pd\Om$ we have
\ga
\label{dsot} df=\pd_{\tau}f\, d\sigma,\quad d\sigma(\zeta)=\ov\ta_\zeta\, d\zeta,\quad
d\sigma^\la=|\pd_\tau\gaa^\la|\, d\sigma.
\end{gather}

To simply the use of the chain rule, we need to compute   derivatives in $\pd\Om^\la$
or $\overline{\Omega^\lambda}$.   At $z^\la=\gaa^\la(z)$, we have
\eq{tazf}
\ta^\la_z=|\pd_\ta\gaa^\la|^{-1}\pd_\ta\gaa^\la(z),\quad
(\pd_{\ta^\la}u^\la)(z^\la)=|\pd_\ta\gaa^\la(z)|^{-1}\pd_\ta(u^\la(\gaa^\la))(z).
\eeq
Combining with
$[\pd_\la,\pd_\ta]=0$, on
 $\cL C^{1,1}_*(\pd\Om_\gaa)$ and with $\gaa\in\cL C_*^{1,1}(\pd\Om)$
 we define and compute the  following commutator:
\ga
[\pd_\la,\pd_{\ta_z^\la}](f^\la(z^\la))=\pd_\la[\pd_{\ta_z^\la}(f^\la(z^\la))]
-\pd_{\ta_z^\la}[\pd_\la((f^\la(z^\la)))],\\
\label{comt}
[\pd_\la,\pd_{\ta_z^\la}]=|\pd_{\ta_z}\gaa^\la|\pd_\la|\pd_{\ta_z}\gaa^\la|^{-1}\pd_{\ta^\la_z}
=-(\pd_{\la}\log|\pd_{\ta_z}\gaa^\la|)\pd_{\ta^\la_z}.
\end{gather}
Therefore, for $ \gaa\in\cL C_*^{i,j}(\pd\Om)
\cap\cL C_*^{1,0}(\pd\Om)$, we have
\gan
\pd_{\tau^\la}\colon\cL C_*^{i,j}(\pd\Om_\gaa)\to\cL C_*^{i-1,j}(\pd\Om_\gaa), \quad
\pd_{\la}\colon\cL C_*^{i,j}(\pd\Om_\gaa)\to\cL C_*^{i,j-1}(\pd\Om_\gaa),\\
[\pd_\la,\pd_{\ta^\la}]\colon
\pd_{\tau^\la}\colon\cL C_*^{i,j}(\pd\Om_\gaa)\to\cL C_*^{i-1,j-1}(\pd\Om_\gaa),
\end{gather*}
whenever the exponents are non negative. For $ \gaa\in\cL C^{k,j}(\pd\Om)$, we have
\gan
\pd_{\tau^\la}\colon\cL C^{k,j}(\pd\Om_\gaa)\to\cL C^{k-1,j}(\pd\Om_\gaa),\ k-1\geq j;\\
\pd_{\la}\colon\cL C^{k,j}(\pd\Om_\gaa)\to\cL C^{k-1,j-1}(\pd\Om_\gaa);\\
[\pd_{\la},\pd_{\tau^\la}]\colon\cL C^{k,j}(\pd\Om_\gaa)\to\cL C^{k-2,j-1}(\pd\Om_\gaa),
\quad k\geq j+1\geq2;\\
[\pd_{\la},\pd_{\tau^\la}]\colon\cL C^{k,j}(\pd\Om_\gaa)\to\cL C^{k-1,j-1}(\pd\Om_\gaa), \quad
 \text{for $\gaa\in\cL C^{k+1,j}(\pd\Om)$ and $k\geq2$}.
\end{gather*}

Throughout the paper, we   denote by $C_{k+\all,j}$, or $C$,  a constant which depends on
\eq{const}
\sup_\la|\det(\Gaa^\la)'|_0^{-1},\ |(\Gaa^\la)'|_0,\
 \|\Gaa\|_{k+\all,j},\ ||\hat\gaa'|^{-1}|_0,\ |\hat\gaa'|_0,\  |\hat\gaa|_{k+\all},
 \eeq
  where $\hat\gaa$ is a parameterization
 for $\pd\Om$ of class $\cL C^{k+\all}\cap\cL C^1$. We
 also denote by $C_{k+\all}$ or $C$
  a constant which depends on the
 last three quantities.
The constants $\cL C_{l+\beta,j}^*$ will depend only on quantities in \re{const},
 where $\|\Gaa\|_{k+\all,j}$
is replaced by $|\Gaa|_{l+\all,j}$.

 A  consequence of  \re{tazf}-\re{comt} is the following.
\le{chainp++}  Let $\pd\Om\in\cL C^{k+\all}\cap\cL C^{1}$.
 Let $\gaa^\lambda$ embed $\pd\Om$ onto $\pd\Om^\lambda$ with
  $\gaa\in{\cL B}^{k +\all,j}(\pd\Om)\cap\cL C^{1,0}(\pd\Om)$.
Then $\{u^\lambda\}\in\cL B^{k+\all,j}(\pd\Om_\gaa)$ if and only if
 $\{\partial_{\tau^\la}^a\pd_\lambda^bu^\la\}$ or $\{\pd_\lambda^b\partial_{\tau^\la}^a u^\la\}$
 is in $\cL B^{\all,0}(\pd\Om_\gaa)$
for every $(a,b)$ with $a+b\leq k$ and $b\leq j$.  Moreover,
$$
C_{k+\all,j}^{-1}\|u\|_{k+\all,j}\leq
\sum_{a+b\leq k,b\leq j}\sup_\la\|\partial_{\tau^\la}^a\pd_\lambda^bu^\la\|_{\all,0}
\leq C_{k+\all,j}\|u\|_{k+\all,j}.$$
These  conclusions remain true  if $\cL C^{k+\all,j}$ and $\cL C^{\all,0}$ substitute  for
$\cL B^{k+\all,j}$ and $\cL B^{\all,0}$, respectively.
\ele

 We distinguish the first-order  derivatives on $\Om^\la$ by $\pd_{x^\la}$ in real
 variables $x^\la$
 and denote the first-order derivatives on $\Om$   by $\pd_x$.  Then for $x^\la=\Gaa^\la(x)$
\eq{tazf+}
\pd_{x^\la}u^\la=(\pd_x\Gaa^\la)^{-1}\pd_x(u^\la\circ\Gaa^\la).
\eeq
Combining with
$[\pd_\la,\pd_x]=0$, we define and compute on
 $\cL C^{1,1}_*(\ov\Om_\Gaa)$ with $\Gaa\in\cL C_*^{1,1}(\ov\Om)$
  the following commutator:
\ga
[\pd_\la,\pd_{x^\la}](f^\la(x^\la))=\pd_\la[\pd_{x^\la}(f^\la(x^\la))]
-\pd_{x^\la}[\pd_\la((f^\la(x^\la)))],\\
\label{comt+}
[\pd_\la,\pd_{x^\la}]=\pd_{\la}((\pd_{x}\Gaa^\la)^{-1})\pd_x\Gaa^\la\pd_{x^\la}.
\end{gather}
We denote by $\pd_{x^\la}^a$ the derivatives of order $a$ in $x^\la$.
The following can be verified easily.
\le{chainp+}
Let $\Gamma^\lambda$ embed  $\ov\Om$ onto $\ov{\Om^\lambda}$ with
  $\Gaa\in{\cL B}^{k+\all,j}(\ov\Om)\cap\cL C^{1,0}(\ov\Om)$.
Then $\{u^\lambda\}\in\cL B^{k+\all,j}(\ov\Om_\Gaa)$ if and only if
$\{\pd_{x^\la}^a\pd_\lambda^bu^\la\}$ or $\{\pd_\lambda^b\pd_{x^\la}^a u^\la\}$ is in
$\cL B^{\all,0}(\ov\Om_\Gaa)$
for every $(a,b)$ with $a+b\leq k$ and $b\leq j$.  Moreover,
$$
C_{k+\all,j}^{-1}\|u\|_{k+\all,j}\leq
\sum_{a+b\leq k,b\leq j}\|\pd_{x^\la}^a\pd_\lambda^bu^\la\|_{\all,0}
\leq C_{k+\all,j}\|u\|_{k+\all,j}.$$
The  conclusions remain true  if $\cL C^{k+\all,j}$ and $\cL C^{\all,0}$ substitute  for
$\cL B^{k+\all,j}$ and $\cL B^{\all,0}$, respectively.
\ele

We have seen the dependence of spaces $\cL C^{k+\all,j}$ in parameterizations through \rl{chainp}.
Throughout the paper, we   assume that $\gaa^\la$ is the restriction of $\Gaa^\la$ on $\pd\Om$.
We will return in   section~\ref{sec8} to further
discuss the spaces $\cL C^{k+1+\all,j}$ and $\cL B^{k+1+\all,j}$ and define H\"older
spaces for exterior domains.

We conclude the section with   further notation. Recall that $\Om\in\cL C^1$ is bounded and has
the standard orientation.
On $\pd\Om\times\pd\Om$ and off its diagonal, define
$
K(z,\zeta)=\f{1}{\pi}\pd_{\tau_\zeta}\arg(z-\zeta)$.
 By \re{dsot}, we have $K(z,\zeta)\,d\sigma(\zeta)=\f{1}{\pi}d_{\zeta}\arg(z-\zeta)$ and hence
\ga\nonumber
 \int_{\pd\Om}K(z,\zeta)\, d\sigma(\zeta)=
1,\quad z\in\pd\Om.
\end{gather}
A basic property of kernel $K$ is that $|K(z,\zeta)|\leq C|\zeta-z|^{\all-1}$ for
 $\zeta,z\in\pd\Om$, when
$\pd\Om\in\cL C^{1+\all}$ with $0<\all<1$.
By Fubini's theorem
 and H\"older inequalities (or Young's inequality), we have two bounded operators
 on $L^p(\pd\Om)$ ($p\geq1$)
\gan
\cL Kf(z)=\int_{\pd\Om}f(\zeta)K(z,\zeta)\, d\sigma(\zeta),
\quad\cL K^*f(z)= \int_{\pd\Om}f(\zeta)K(\zeta,z)\, d\sigma(\zeta).
\end{gather*}
These two operators  play   important roles in solving the Dirichlet and Neumann problems.
We will regard
$\cL K$ and $\cL K^*$ as operators on $L^1(\pd\Om)$, unless otherwise specified.
%
% For   $z\neq\zeta$, we have
% \gan
% \partial_{\nu_\zeta}\log|z-\zeta| =\pd_{\tau_\zeta}\arg(z-\zeta),
% \quad
% \pd_{\tau_\zeta}\log|z-\zeta|=
%- \partial_{\nu_\zeta}\arg(z-\zeta).
% \end{gather*}

\setcounter{thm}{0}\setcounter{equation}{0}
\section{Integral equations for Dirichlet and Neumann problems}
\label{sec3}

Let $\Om$ be a bounded domain in $\cc$ with $\cL C^1$ boundary and let $f\in L^1(\pd\Om)$.
On $\Om$ and $\Om'=\cc\setminus\ov\Om$,   the double and simple
potentials with moment $f$ are respectively
\ga
\label{Ufz}
Uf(z)=  \f{1}{\pi}
\int_{\pd\Om}f(\zeta)\pd_{\tau_\zeta}\arg {(z-\zeta )}\, d\sigma(\zeta),\\
\label{Ufz+}
Wf(z)=
\f{1}{\pi } \int_{\pd\Om} f(\zeta)\log|z-\zeta |\, d\sigma(\zeta).
\end{gather}
 The following formulae lead the   solutions of the Dirichlet and Neumann problems
 via the Fredholm theory.
\pr{pjumpf}
Let $\pd \Om\in \cL C^{1+\alpha }$ with $0<\all<1$.
Suppose that $f$ is a continuous function on $\pd\Om$. Then $Wf$ is continuous on $\cc$ and
 $Uf$ extends to  functions
$U^+f\in\cL C^0(\ov\Om)$ and $U^-f\in\cL C^0(\ov{\Om'})$.  On $\pd\Om$
\ga
\label{u+-f}
U^+f=f+\cL Kf,\quad
 U^-f=-f+\cL Kf;\\
\partial_{\nu}Wf =f +\cL K^*f,
\quad-\pd_{-\nu}Wf= - f+\cL K^*f.\label{dnwf}
\end{gather}
\end{prop}
\begin{proof}  Recall that we   parameterize $\pd\Om$ by $\gaa(t)$  such that
 $dt$ is the arc-length element agreeing with the standard orientation of $\pd \Om$.
Let $l$ be the arc-length of $\pd\Om$.
We abbreviate $f(\gaa(t))$, $\tau(\gaa(t)),$ and $ \nu(\gaa(t))$ by $f(t),\tau(t),$ and
$\nu(t)$, respectively.

Write  $\gaa(t)=\gaa(s)+\ta(s)(t-s)+R(t,s)$ with $|R(t,s)|\leq |t-s|/4$ for $|t-s|<1/C$. Then
\eq{dtsc}
 \sqrt{h^2+|t-s|^2}/2\leq |\gaa(s)+h\nu( s )-\gamma(t)|\leq 2\sqrt{h^2+|t-s|^2}.
\eeq
For a later purpose we remark that the above merely needs $\gaa\in\cL C^1$.
Note that
$
\nu(t)\cdot (\gamma(t)-\gamma(s ))
=\nu(t)\cdot\int^{t}_{s  }(\gamma'(r)-\gamma'(t))\, dr.
$  Returning to condition $\gaa\in\cL C^{1+\all}$, we have, for $|t-s|<1/C$,
\eq{ntgt}
\f{|\nu(t)\cdot(\gaa(t)-\gaa(s))|}
{|\gaa(s)+ h\nu(s)-\gaa(t)|^2}\leq \f{C|s-t|^{1+\all}}{|t-s|^2+h^2}\leq
C|s-t|^{\all-1}.
\eeq
In particular,
 $$ %\eq{ksti}
 k(s,t)\df\pd_t\arg(\gamma(s)-\gamma(t) )=\f{\nu(t)\cdot (\gamma (t)
 -\gamma (s))}
{|\gamma(s)-\gamma(t)|^2}
$$ % \end{equation}
satisfies
$
 |k(s,t)|\leq C|s-t|^{\alpha-1},
 $
  and $k(s,\cdot)$ is integrable.

Recall that
$$
Uf(z)=  \f{1}{\pi}
\int_{0}^{l}f(t)\pd_t\arg {(z-\gaa(t) )}\, dt.
$$ Fix a small  $\e>0$ and $\gaa(s)\in\pd\Om$.
Let  $\delta=\dist(z,\pd\Om)$.
Choose $s_*$ such that $|\gaa(s_*)-z|=\del$. Note that as $\gaa\in\cL C^{1+\all}$
with $\all<1$, $s_*$ may not be unique even if $\delta$ is sufficiently small.
Nevertheless, $z=\gaa(s_*)+\del\nu(s_*)$.
Let $|z-\gaa(s)|$ be so small that $|s_*-s|<\e/2$.
 We have
\aln
&\pd_{t}\arg(z-\gaa(t)) =\f{\nu(t)\cdot(\gaa(t)-\gaa(s_*))
 }{|\gaa(t)-z|^2}+
\f{\nu(t)\cdot(\gaa(s_*)-z)
}{|\gaa(t)-z|^2}.
\end{align*}
 By \re{dtsc}-\re{ntgt}, we get
 \eq{pdta}\nonumber
 |\pd_{t}\arg(z-\gaa(t))|= \f{|\nu(t)\cdot(\gaa(t)-z)|
}{|\gaa(t)-z|^2}
 \leq C\f{|t-s_* |^{1+\alpha}+\delta}{\delta^2+|t-s_* |^2}.
 \eeq
 Since $s_*$ depends only on $z$, this shows that
 \eq{nlog}
\int_{0}^{l}\Bigl|\pd_{t}\arg(z-\gaa(t))\Bigr|\, dt
<C_0, \quad z\in\cc.
 \end{equation}
Here $C_0$ is independent of $s_*,z$ and $\delta$.
We have
\aln
&\bigl|\int_{0}^{l}
(f(t)-f(s))(\pd_{t}\arg {(z-\gaa(t) )}-\pd_{t}\arg {(\gaa(s)-\gaa(t) )})\, dt\bigr|\\
&\ \leq 2\|f\|_0\sup_{|\zeta-\gaa(s)|>\e}|\pd_t\arg(z-\gaa(t))-\pd_t\arg(\gaa(s)-\gaa(t))|
+2C_0\sup_{|t-s|<\e}|f(t)-f(s)|.
\end{align*}
By \re{nlog} and the continuity of $f$ at $s$, we conclude
\al\label{s3s5}
&\lim_{\pd\Om\not\ni z\to \gaa(s)}\int_{0}^{l}
(f(t)-f(s))\pd_{t}\arg {(z-\gaa(t) )}\, dt\\
&\qquad\qquad = \int_{0}^{l}
(f(t)-f(s))\pd_{t}\arg {(\gaa(s)-\gaa(t) )}\, dt.\nonumber
\end{align}
Expand both sides. By the values of $\int_{0}^l\pd_t\arg(z-\gaa(t))\, dt$ on $\cc$, we get \re{u+-f}.

For \re{dnwf},   recall that
$$
Wf(z)=
\f{1}{\pi } \int_{0}^{l} f(t)\log|z-\gaa(t)|\, dt.
$$
  We want to show that
 the interior and exterior    normal derivatives  of $Wf$ exist at $\gaa(s)$.
 Let  $|h|> 0$ be small. By the fundamental theorem of calculus, we have
\aln
&\f{W\!f(\gaa(s)+h\nu(s))-Wf(\gaa(s))}{h/\pi}
%=\int_0^lf(t)\int_0^1\f{\nu(s)\cdot(\gaa(s)+rh\nu(s)-\gaa(t))}
%{|\gaa(s)+rh\nu(s)-\gaa(t)|^2}\, dr dt \\
=\int_0^l\int_0^1f(t)\f{\nu(s)\cdot(\gaa(s)-\gaa(t))\, dr dt}
{|\gaa(s)+rh\nu(s)-\gaa(t)|^2}\\
& \hspace{5em}+\int_0^l\int_0^1\f{f(t)rh\, dr dt}
{|\gaa(s)+rh\nu(s)-\gaa(t)|^2}\df R_1(s,h)+R_2(s,h).
\end{align*}
We see that $R_1(s,h)$ tends to $\int_0^lf(t)\f{\nu(s)\cdot
(\gaa(s)-\gaa(t))}{|\gaa(s)-\gaa(t)|^2}\,
dt$ as  $h\to0$, by \re{ntgt} and  the dominated convergence theorem.

  Decompose $R_2$ into   integrals $R_\e', R_\e''$ in $(t,r)$ with $|t-s|<\e$
and $|t-s|>\e$, respectively. It is immediate that, for fixed $\e>0$, $R_\e''(s,h)$
tends to $0$ as
$h\to0$. Note that the integrand in $R_2$ does not change the sign
when $f\geq0$.  By the continuity of $f$, it remains to show that when $f\equiv 1$
\eq{lelh}
\lim_{\e\to0}\lim_{h\to0^+}R_\e'(s,h)=\pi, \quad \lim_{\e\to0}\lim_{h\to0^-}R_\e'(s,h)=-\pi.
\eeq
Let $E(s,t)=\gaa(s)-\gaa(t)+\gaa'(s)(t-s)$. Then $|E(s,t)|\leq C|s-t|^{1+\all}$ and for $|h|<1$
\al
%\label{gsrh}
\nonumber
&|\gaa(s)+ h\nu(s)-\gaa(t)|^2=|-\ta(s)(t-s)+ h\nu(s)+E(s,t)|^2\\
\nonumber &\hspace{21.5ex}=
(s-t)^2+h^2+\tilde E(s,t,h),\\
\nonumber\label{tEcr}
&|\tilde E(s,t,h)|\leq C(| h||t-s|^{1+\all}+|t-s|^{2+\all})\leq2 C\e^\all(h^2+|s-t|^2).
\end{align}
Let $h$ tend to $0^+$ and then let $\e$ tend to $0^+$. We get
\aln
R_\e'(s,h)&=(1+C\e^\all)\int_{|t-s|<\e} \int_0^1\f{rh\, dr dt }
{(s-t)^2+(rh)^2}\to\pi.
\end{align*}
This yields the first identity in \re{lelh}. The second is obtained by analogy.
\end{proof}

Let $\Om$ be a bounded domain in $\cc$ with $\cL C^1$ boundary.
Recall the Cauchy transform
\eq{cfz1}
\cL Cf(z)=\f{1}{2\pi i}\int_{\pd\Om}\f{f(\zeta)}{\zeta-z}\, d\zeta
\eeq
on $\cc\setminus\pd\Om$ for   $f\in L^1(\pd\Om)$.
Away from $\pd\Om$,
$$ %\eq{uf-i}
 Uf=2\RE\cL Cf,\quad \text{for $ f=\ov f$};\quad
\pd_zWf=-i\cL C[ \ov\ta f].
$$ %\eeq
%Here we regard $\tau_\zeta=\gaa'/{|\gaa'|}$ as a complex number for a parameterization $t\to \zeta=\gaa(t)\in\cc$ of $\pd\Om$.
We will  derive   estimates of $Uf,Wf$ via $\cL Cf$, when $f$ is in H\"older spaces.

\le{jumpf} Let $0<\all<1$ and let $k,l\geq0$ be integers.
Let $\Om$ be a bounded domain in $\cc$ with $\pd \Om\in \cL C^{1 }$ and let
 $\Om'=\cc\setminus\ov\Om$.
\bppp \item
 Let $f$ be  a function  in $C^\all(\pd \Om)$. Then $\cL Cf$
extends
to functions $\cL C^+f\in\cL C^{\alpha}(\ov\Om)$ and $\cL C^-f\in\cL C^{\alpha}(\ov{\Om'})$.
Moreover, on $\pd\Om$
\ga\label{c+fs}
\cL C^-f(z)=
\f{1}{2\pi i}\int_{\pd\Om}\f{ f(z)-f(\zeta)}{z-\zeta} \, d\zeta, \quad
\cL C^+f(z)-\cL C^-f(z)=f(z).
\end{gather}
\item Let $f\in \cL C^{l+\alpha}(\pd \Om)$ and $\pd \Om\in \cL
C^{k+1+\alpha}$ with $k+1\geq l$. Then  $\cL C^+f\in \cL C^{l+\alpha}(\ov\Om)$
and $\cL C^-f\in \cL C^{l+\alpha}(\ov{\Om'})$.  If $f$ and
$\pd\Om$ are real analytic, then $\cL C^+f\in\cL C^\om(\ov\Om)$.
\item
If $f\in L^\infty(\pd\Om)$, then $Wf$ extends to a
continuous function   on $\cc$.
\eppp
\end{lemma}
\begin{proof}
(i). Let $z\not\in\pd\Om$ and
let $z_*=\gamma(s)$ satisfy $|z-z_*|=\dist(z,\pd \Om)=\delta$.  Assume that
 $\delta$ is small. We have
$$ %\eq{cfpz}
(\cL Cf)'(z) =
\f{1}{2\pi i}\int_{\pd\Om}\f{ f(\zeta)-f(z_*)}{(\zeta-z)^{2}} \, d\zeta.
$$ %\eeq
By \re{dtsc},
$
|(\cL Cf)'(z)|\leq C\int_{0}^{\infty} (r+\delta)^{\alpha-2}\, dr
\leq C'\delta^{-1+\alpha}=C'\dist(z,\pd \Om)^{-1+\alpha}.
$
By the Hardy-Littlewood lemma, $\cL Cf$ is of class
$\cL C^{\alpha}$ on $\ov\Om$ and $\ov{\Om'}$.

To find  the boundary values of $\cL C^+f$ and $\cL C^-f$, it suffices to compute  limits
  of $\cL Cf$
in the normal directions. Let $z=\gaa(s)+\del \nu(s)\in\Om'$ and $z_*=\gaa(s)$. Write
$$
\cL C^-f(z)= \f{1}{2\pi i}\int_{\pd\Om}\f{ f(\zeta)-f(z_*)}
{\zeta-z} \, d\zeta.
$$
By \re{dtsc}, we obtain
$$
 \f{ |f(\gaa(t))-f(\gaa(s))|}
{|\gaa(t)-z|}\leq C|t-s|^{\alpha-1}.
$$
By the dominated convergence theorem, we find on $\pd\Om$
$$
\cL C^-f(z_*)= \f{1}{2\pi i}\int_{\pd\Om}\f{ f(\zeta)-f(z_*)}
{\zeta-z_*} \, d\zeta.
$$
Analogously, we can verify the formula for $
\cL C^+f.$

(ii).
For higher order derivatives,   for $l\leq k+1$  we get from \re{dsot}
\ga
\nonumber
\pd_z \cL Cf(z) =\f{1}{2\pi i}\int_{\pd\Om}\f{f(\zeta)\, d\zeta}{(\zeta-z)^{2}}
=\f{1}{2\pi i}\int_{\pd\Om}\ov\tau\pd_\tau f(\zeta)\f{
d\zeta}{\zeta-z},\\
\pd_z^{l} \cL Cf(z)
\label{dzcf}
= \f{1}{2\pi i}\int_{\pd\Om}\f{(\ov\tau\pd_\tau)^{l}f(\zeta)
}{\zeta-z}\, d\zeta,\\
\nonumber
\pd_z^{l+1} \cL Cf(z) %&= \f{1}{2\pi i}\int_{\pd\Om}
%(\ov\tau\pd_\tau)^{m}f(\zeta)\f{  d\zeta}{(\zeta-z)^{2}}
%\\
= \f{1}{2\pi i}\int_{\pd\Om}\f{(\ov\tau\pd_\tau)^{l}f(\zeta)
-(\ov\tau\pd_\tau)^{l}f(z_*  )}{(\zeta-z)^2}\, d\zeta.
\end{gather}
By \re{dtsc} again, we obtain
\ga
\label{pdzl1-}
|\pd_z^l\cL Cf(z)|\leq |(\ov\ta\pd_\ta)^lf|_0+C_1|(\ov\tau\pd_\ta)^lf|_{\all},\\
\nonumber %\label{pdzl1}
|\pd_z^{l+1} \cL Cf(z)|\leq C_{1}|(\ov\ta\pd_\ta)^{l}f|_{\all}\dist(z,\pd\Om)^{-1+\alpha}.
\end{gather}
Therefore,  $\cL Cf$ is of class $\cL C^{l+\alpha}$ on $\ov\Om$ and $\ov{\Om'}$.

For the real analytic case, we note that the constant $C_{1}$ in \re{pdzl1-}
is independent of $l$.
By Taylor's theorem,
 a function $f$ on $\ov\Om$ with $\pd\Om\in\cL C^\om$ is real analytic if and
 only if $$
 |\pdoz^i\pd_z^jf(z)|\leq i!j!C^{i+j+1}
 $$
for some $C>1$ independent of $z$.  Note that
$|(\ov\ta\pd_{\ta})^lf|_\all\leq C|(\ov\ta\pd_{\ta})^{l+1}f(z)|_0$.
 By \re{pdzl1-} it suffices to show that  near each point $z_0 \in\pd\Om$, we have
\eq{tapdk}
|(\ov\ta\pd_{\ta})^lf(z)|\leq C^{l+1}l!.
\eeq
Let $x\to \var(x)$ be a local real analytic parameterization of $\pd\Om$
with $\var(0)=z_0$.  Then $(d\var^{-1})(\ov\ta\pd_{\ta})$
is given by $A(x)\pd_x$ with $A\neq0$. Extend $\var(x)$, $A(x)$ and $f(\var(x))$
as holomorphic functions
and denote them by the same symbols.
We find   local holomorphic coordinates
$z=\psi(w)$ such that $(d_w\psi)^{-1}(A\pd_z)
=\pd_w$. Then $(\ov\ta\pd_\ta)^lf(\zeta)=\pd_w^l(f\circ\var\circ\psi)(w)$
 with $\zeta=\var\circ\psi(w)$.
Since $f\circ\var\circ\psi$ is holomorphic, we get \re{tapdk} easily.

(iii). One can verify  the
 continuity of $Wf$ via \re{dtsc}.
\end{proof}

\setcounter{thm}{0}\setcounter{equation}{0}
\section{Derivatives of $\cL Kf$ and $\cL  K^*f$}
\label{sec4}
In this section, we recall some calculation on kernels by Kellogg~\ci{Kezeeib}, \ci{Keontw}
and express $\cL K$ and $\cL K^*$ via the Cauchy transform.
We   write $\nu_{\gaa(t)}=\nu(t), \tau_{\gaa(t)}=\tau(t)$ and $f(\gaa(t))=f(t)$.
Let $l_j$ be  the $j$-th component $\gaa_j$ of $\pd\Om$. Recall that
  $\gaa_0$ is the outer boundary of $\pd\Om$.
Set $l_{-1}=0$.
\le{kta} Let $\pd\Om\in\cL C^{k+1+\all}$ with $k\geq0$ and $0<\all<1$. On $\pd\Om\times\pd\Om$,
 we have
\ga
\label{kta1}
|\pd_{\tau_z}^kK(\zeta,z)|\leq C_{k+1+\all}|z-\zeta|^{\all-1},\\
\label{kta2}
|K(z_2,\zeta)-K(z_1,\zeta)|\leq C_{1+\all}\frac{|z_2-z_1|}{|\zeta-z_1|^{2-\all}},\\
\quad |\pd_{\tau_{z_2}}^kK(\zeta, z_2)-\pd_{\tau_{z_1}}^kK(\zeta, z_1)|
\leq C_{k+1+\all}\frac{|z_2-z_1|^\all}{|\zeta-z_1|},
\label{kta3}
\end{gather}
where the last two inequalities require $|\zeta-z_1|>2|z_2-z_1|$.
\ele
\begin{proof}
We first verify \re{kta2}. We have
$$
\pd_t\arg(\gamma(s)-\gamma(t))
=\f{N(s,t)}{|\gamma(s)-\gamma(t)|^2}, \quad N(s,t)=\nu(t)\cdot\int_{s}^t
(\gamma'(r)-\gamma'(t))\, dr.
$$
First, we obtain $
 |N(s_1,t)|\leq C|t-s_1|^{1+\alpha}$ and
\ga
\left||\gamma(s_2)-\gamma(t)|^2-|\gamma(s_1)-\gamma(t)|^2\right|\leq
C |s_2-s_1|(|t-s_1|+|t-s_2|).
\label{gs2s}\nonumber
\end{gather}
Note that
$$
N(s_2,t)-N(s_1,t)=-\nu(t)\cdot
\int_{s_1}^{s_2}(\gamma'(r)-\gamma'(t))\, dr.
$$
Using $|\gamma'(r)-\gamma'(t)|\leq |\gaa'(r)-\gaa'(s_1)|+|\gaa'(t)-\gaa'(s_1)|$,
we obtain
$$
|N(s_2,t)-N(s_1,t)|\leq C(|s_2-s_1|^{1+\alpha}+
 |s_2-s_1||t-s_1|^\alpha ).
$$
Combining the above, we get for $|t-s_1|\geq2|s_2-s_1|$,
\al\label{ks2s}\nonumber
|K(s_2,t)-K(s_1,t)|\leq C  \f{
|s_2-s_1| }{|t-s_1|^{2-\all}}.
\end{align}

To verify \re{kta3},   we may assume that $x'(t)\neq 0$ for $t$ near $s$.
For a later use, we remark that the rest of computation does not need
 $dt$ to be the arc-length element. The condition $x'(t)\neq 0$ is
only
to ensure that  $C^{-1}|t-s|\leq |x(t)-x(s)|\leq C|t-s|$.
Following \ci{Keontw}, let
\eq{xsxt}\nonumber
(x(t)-x(s))q(s,t)=y(t)-y(s).
\end{equation}
 By \re{tazf}, we have
$\pd_{\ta}u(\gaa(t))=|\pd_t \gaa|^{-1}\pd_t(u(\gaa(t)))$. By $\arg(x+iy)=\arctan(y/x)\mod\pi$, we get
$$
\pd_\ta^kK(\gaa(s),\gaa(t))=\sum_{j\leq k}Q_{j}^\la(t)\pd_t^{j}q(s,t).
$$
where $Q_{j}$  are $C^\infty$ functions in  $|\pd_t\gaa|^{-1},
\pd_t\gaa,\ldots, \pd_t^{k+1}\gaa$, and $q(s,t)$. Hence \re{kta3} follows from
\eq{kta3t}
|\pd_{t_2}t^kq(s, t_2)-\pd_{t_1}^kq(s, t_1)|\leq C_{k+1+\all}\frac{|t_2-t_1|^\all}{|s-t_1|}.
\eeq
Differentiate the equation  and solve for $\pd_t^jq$.  Then $(x(t)-x(s))^{k+2}\pd_t^{k+1}q$
equals the determinant
\begin{equation*}
\begin{vmatrix}
  x(t)-x(s) & 0 & 0&\cdots & 0 & y(t)-y(s) \\
  x'(t) & x(t)-x(s) & 0 & \cdots & 0 & y'(t)\vspace{.75ex}\\
  x''(t) & \binom{2}{1}x'(t) & x(t)-x(s) &\cdots & 0 & y''(t)\\
  \cdot &\cdot &\cdot &\cdot&\cdot&\cdot\\
  \cdot &\cdot &\cdot &\cdot&\cdot&\cdot \\
  \cdot &\cdot &\cdot &\cdot&\cdot&\cdot\\
  x^{(k)}(t) & \binom{k}{1}x^{(k-1)}(t) & \binom{k}{2}x^{(k-2)}(t) & \cdots & x(t)-x(s)
  &y^{(k)}(t)
  \vspace{1ex} \\
x^{(k+1)}(t) & \binom{k+1}{1}x^{(k)}(t) & \binom{k+1}{2}x^{(k-1)}(t) & \cdots &
\binom{k+1}{k}x'(t) & y^{(k+1)}(t)
\end{vmatrix}.
\end{equation*}
Multiply  the $i$-th row by $\f{1}{i!}(s-t)^i$ and add  it to
the first row. The entries in the first row become
$$
\frac{1}{j!}(s-t)^{j}(\cL P_{k+1-j}x(s,t)-x(s)), \quad 0\leq j\leq k,
\quad \cL P_{{k+1}}y(s,t)-y(s),
$$
where   $\cL P_kf(s,t)$ denotes the Taylor polynomial
of degree $k$ of $f$ about $s=t$. Then
the remainder $\cL R_{k}f(s,t)=f(s)-\cL P_{k}f(s,t)$ can be written as
$$
\cL R_{k}f(s,t)=\frac{(s-t)^{k}}{k!}\hat R_kf(s,t), \quad  \hat R_kf(s,t)
=\int_0^1\Bigl\{f^{(k)}(t+r(s-t))
-f^{(k)}(t)\Bigr\}\, dr.
$$
Therefore,
\eq{keli}
(x(s)-x(t))^{k+2}\pd_t^{k+1}q(s,t)=(s-t)^{k+1}\Bigl\{
P_0\hat R_{k+1}y(s,t)+\sum_{i=1}^{k+1}
P_i
\hat R_ix(s,t)\Bigr\},
\eeq
where $P_i(s,t)$ are polynomials in $\pd_t\gaa, \ldots,\pd_t^{k+1}\gaa,
x(s)-x(t)$.
Then \re{kta1} follows from
$|\hat R_{k+1}\gaa(s,t)|\leq C|\gaa|_{k+1+\all}
|s-t|^\all$. Assume that $|s-t_2|\geq2|t_2-t_1|$. We have
$|\hat R_i\gaa(s,t_2)|\leq C|s-t_1|^\all$ and
\gan
 |(P_i,\hat R_{k+1}\gaa)(s,t_2)-(P_i,\hat R_{k+1}\gaa)(s,t_1)|
 \leq C|\gaa|_{k+1+\all}|t_2-t_1|^\all.
\end{gather*}
Using $|t_2-t_1|\leq |s-t_1|^{1-\all}|t_2-t_1|^\all$, we get
 \gan
 |(s-t_2)^{k+1}-(s-t_1)^{k+1}|\leq C|s-t_1|^{k+1-\all}|t_2-t_1|^\all,\\
|(x(s)-x(t_2))^{-k-2}-(x(s)-x(t_1))^{-k-2}|\leq C |s-t_1|^{-k-2-\all}|t_2-t_1|^\all.
\end{gather*}
By the above inequalities, we get \re{kta3t} and hence \re{kta3}.
\end{proof}

We   need a function $\Theta$, which   plays an important role in Kellogg's
first-order derivative estimate.
Define a single-valued continuous
function $\pi\Theta(t,t)$ on $[0,l]$, which measures the angle from the $x$-axis
 to the tangent
line of $\pd\Om$ at $\gaa(t)$.
Set
$$
\Theta(s,t)=\Theta(s,s)+\f{1}{\pi}\int_s^t\pd_r\arg(\gaa(s)-\gaa(r))\, dr,\quad s,t\in[0,l].
$$
Then
$
\partial_t\Theta(s,t)=K(s,t),
\Theta(s,t)=\Theta(t,s)$, and $
 \Theta(s,l)-\Theta(s,0)=1.
$
\le{dsar}   Let $\pd\Om\in \cL C^{k+1+\alpha}$ with $k\geq0$ and $0<\alpha<1$.
Let $I_i=\{s\colon 0<s-(l_0+\ldots+l_{i-1})<l_i\}$.
\bppp
\item Let   $\varphi\in L^1(\pd\Om)$.
 In the sense of distributions,
\ga\label{psar}
\pd_s\int_0^l\var(\gaa(t))\Theta(s,t)\, dt=
\int_0^l\var(\gaa(t))K(\gaa(t),\gaa(s))\, dt,\quad  s\in I_i,\\
\label{psar+} \pd_{\tau}^{k}\int_{\pd\Om}\var(\zeta) K(\zeta,z)\, d\sigma(\zeta)=
 \int_{\pd\Om}\var(\zeta)\pd_{\tau_z}^{k}K(\zeta,z)\, d\sigma(\zeta),\quad
  \text{  $z\in\pd\Om$}.
\end{gather}
\item
 If   $\var$ and $\pd_{\tau}\varphi$ are in $ L^1(\pd\Om)$, then on $\pd\Om$ and
 in the sense of distributions
\ga
 \pd_{\tau}\int_{\pd\Om}\varphi(\zeta)K(z,\zeta)\, d\sigma(\zeta)=
  -\int_{\pd\Om}\pd_{\tau}\varphi(\zeta)K(\zeta,z)\, d\sigma(\zeta).
\label{psar++}\end{gather}
\eppp
\end{lemma}
\begin{proof}
(i). Note  that $\Theta(s,t)$
is a continuous branch of $\f{1}{\pi}\arctan\f{y(s)-y(t)}{x(s)-x(t)}$ on $[0,l]\times[0,l]$.
Then
$$
\pd_s\int_{I_j}\var(t)\Theta(s,t)\, dt=
\int_{I_j}\var(t)\pd_s\arg(\gamma(s)-\gamma(t))\, dt
$$
holds on $I_i$ when $j\neq i$.
It suffices to  verify that on $I_i$
$$
\pd_s\int_{I_i}\var(t)\Theta(s,t)\, dt=
\int_{I_i}\var(t)\pd_s\arg(\gamma(s)-\gamma(t))\, dt.
$$
Thus we may assume that $\pd\Om=\gaa_i$.
We have
$$
 \int_0^l\int_{0}^{l} \f{| \var(t)|}{|t-s|^{1-\alpha}} \, dt  ds
\leq C|\var|_{L^1}.
$$
Hence,
\gan   \int_0^l \int_{0}^{l}| \var(t)\pd_s\Theta(s,t)|\, dt     ds
\leq C\int_{0}^{l}\int_{0}^{l}\f{|\var(t)|}{|t-s|^{1-\alpha}}\, dtds\leq C'|\var|_{L^1}.
\end{gather*}
Therefore, $\int_{0}^{l}| \var(s)\pd_s\Theta(s,t)|\, ds$ is in $L^1(\pd\Om)$.
For a test function $\phi$ on $(0,l)$,
\aln
\int_{0}^{l}\phi'(s)\int_{0}^{l}\var(t)\Theta(s,t)\, dt  ds
 &=\int_{0}^{l}\var(t) \int_{0}^{l} \phi'(s)\Theta(s,t)\,  dsdt
\\
&=-\int_{0}^{l}\phi(s)
\int_0^l\var(t)\pd_s\Theta(s,t)\, dt   ds.
\end{align*}
which gives us \re{psar}.

To verify \re{psar+}, we let $k\geq1$ and  use
\gan
\pd_\ta^{k-1}\int\var(\zeta)K(\zeta,z)\, d\sigma(\zeta)=
\int\var(\zeta)\pd_{\ta_z}^{k-1}K(\zeta,z)\, d\sigma(\zeta).
\end{gather*}
Let $\phi$ be a $\cL C^1$ function on $\pd\Om$.
Let $\chi_\e(\zeta,z)-1$ be a $\cL C^1$ functions on $\pd\Om\times\pd\Om$ which has support
in $|\zeta-z|<\epsilon$ such that $|\chi_\e|\leq 1$
and  $|\pd_{\ta_z}\chi_\epsilon(\zeta,z)|<C\e^{-1}$.
 Now
\aln
I&=\int\pd_{\ta_z}\phi(z)\int\var(\zeta)\pd_{\ta_z}^{k-1}K(\zeta,z)\,
d\sigma(\zeta)\, d\sigma(z)\\
&=\lim_{\e\to0}
\int\var(\zeta)\int\chi_\e(\zeta,z)\pd_{\ta_z}[\phi(z)-\phi(\zeta)]\pd_{\ta_z}^{k-1}K(\zeta,z)
\, d\sigma(z)\, d\sigma(\zeta).
\end{align*}
Write the last double integral as $-I'_\e-I''_\e$ with
\gan
I_\e'=\int\var(\zeta)\int(\pd_{\ta_z}\chi_\e(\zeta,z))[\phi(z)-\phi(\zeta)]
\pd_{\ta_z}^{k-1}K(\zeta,z)\, d\sigma(z)\, d\sigma(\zeta),\\
I_e''=\int\var(\zeta)\int\chi_\e(\zeta,z) [\phi(z)-\phi(\zeta)]
\pd_{\ta_z}^{k}K(\zeta,z)\, d\sigma(z)\, d\sigma(\zeta).
\end{gather*}
Then $I_\e'$ tends to $0$ uniformly in $\zeta$
as $\e\to0$, because as $\e$ tends to $0$
\aln
&\Bigl|\int(\pd_{\ta_z}\chi_\e(\zeta,z))[\phi(z)-\phi(\zeta)]
\pd_{\ta_z}^{k-1}K(\zeta,z)\, d\sigma(z)\Bigr|\\
&\qquad \leq C\e^{-1}\sup_{|z-\zeta|<\e}|\phi(z)-\phi(\zeta)|
\int_{z\in\pd\Om,|z-\zeta|<\e}|\pd_{\ta_z}^{k-1}K(\zeta,z)|\, d\sigma(z)\to0.
\end{align*}
 Since $| \chi_\e|\leq1$ and $| (\phi(z)-\phi(\zeta))
\pd_{\ta_z}^{k}K(\zeta,z)|\leq C$ by \re{kta1}, then $\lim_{\e\to0}I_\e''$ equals
$$
I''=\int\var(\zeta)\int [\phi(z)-\phi(\zeta)]
\pd_{\ta_z}^{k}K(\zeta,z)\, d\sigma(z)\, d\sigma(\zeta).
$$
Since $k\geq1$, then
\aln
\int\pd_{\ta_z}^{k}K(\zeta,z)\,   d\sigma(z)&=\lim_{\e\to0}\int_{|z-\zeta|>\e}
\pd_{\ta_z}^{k}K(\zeta,z)\,   d\sigma(z)\\
&=
\lim_{\e\to0} \{\pd_{\ta_z}^{k-1}K(\zeta,\zeta''_\e)-\pd_{\ta_z}^{k-1}K(\zeta,\zeta'_\e)\}=0.
\end{align*} Here we have
used the continuity of $\pd_{\ta_z}^{k-1}K(\zeta,z)$, and identities $\{\zeta'_\e,\zeta''_\e\}=\pd\Om\cap
\{z\colon|z-\zeta|=\e\}$ for small $\e$  and $\lim_{\e\to0}\zeta'_\e=z=\lim_{\e\to0}
\zeta''_\e$. Now \re{psar+} follows from
$$I=-\int\var(\zeta)\int \phi(z)
\pd_{\ta_z}^{k}K(\zeta,z)\, d\sigma(z)\, d\sigma(\zeta).
$$

(ii). When $\pd\Om$ is parameterized $\gaa(t)$,
  at $z=\gaa(t)$, we have $\pd_\ta f(z)\,d\sigma(z)=df(z)=\pd_t(f(\gaa(t)))\, dt$.
Then  \re{psar++} is obtained by
  integration by parts and \re{psar}.
\end{proof}

 We have seen from \re{psar++}
that differentiating integral operator $\cL K$ inevitably leads
to the   kernel $\cL K^*$. To recover a loss of
regularity in Kellogg's arguments. We will need to combine  \re{kta1} and \re{kta3}
with estimates on   $\cL K,\cL K^*$ from the Cauchy transform.
\le{k1rg}
Let $\pd\Om\in\cL C^{k+1+\alpha}$ with  $0<\alpha<1$. Then for a real
 function $\psi\in\cL C^\all(\pd\Om)$,  $$
\cL  K\psi=  2\RE\{   \cL C^+\psi\}-  \psi,\quad
\cL  K^*\psi=\psi-2 \RE\{ \ta  \cL C^+(\ov\ta\psi)\}.
$$
 In particular,   $
\cL K^*(\cL C^{k+\all}(\pd\Om))\subset\cL C^{k+\all}(\pd\Om)$;
and  for $l\leq k+1$, $
\cL K(\cL C^{l+\all}(\pd\Om))\subset\cL C^{l+\all}(\pd\Om)
$.
\ele
\begin{proof}
The first formula follows from \re{c+fs} immediately.
 Parameterize $\pd\Om$
by $\gaa$ in  arc-length.  By a simple computation we obtain
\al
\label{pdsta}
\pd_s\arg(\gamma(s)-\gamma(t))&=-\RE(\gamma'(s)\ov{\gamma'(t)})\pd_t\arg(\gamma(s)-\gamma(t))\\
&\quad -
\IM(\gamma'(s)\ov{\gamma'(t)})\pd_t\log|\gamma(s)-\gamma(t)|.\nonumber
\end{align}
To verify the second one, we use \re{pdsta} to decompose
\gan
 \cL  K^*\psi(z)=\f{1}{\pi}\int_{\pd\Om} \psi(\zeta)
\pd_{\tau_z}\arg(z- \zeta )\, d\sigma(\zeta)=J_1(z)+J_2(z)
\end{gather*}
with
\gan
 J_1(z)=-\psi(z)+ \f{1}{\pi}\RE\Bigl\{ \tau_z\int_{\pd\Om} (\psi(z)
 \ov{\tau_z}-\psi(\zeta)\ov{\tau_\zeta})
\pd_{\tau_\zeta}\arg(z-\zeta)\,
 d\sigma(\zeta)\Bigr\},\\
J_2(z)= \f{1}{\pi}\IM\Bigl\{ \tau_z  \int_{\pd\Om} (\psi(z)
\ov{\tau_z}-\psi(\zeta)\ov{\tau_\zeta})\pd_{\tau_\zeta}\log|z-\zeta|\,
d\sigma(\zeta)\Bigr\}.
\end{gather*}
By a simple computation,
$$
\cL  K^*\psi(z)=-\psi(z)+\f{1}{\pi}\IM\Bigl\{ \tau_z  \int_{\pd\Om}
(\psi(z)\ov{\tau_z}-\psi(\zeta)\ov{\tau_\zeta}
)\f{d\zeta}{\zeta-z}\Bigr\}.
$$
Therefore,
$
\cL  K^*\psi=-\psi-2 \RE\{ \ta  \cL C^-(\ov\ta\psi)\}.
$
The assertions follow from \rl{jumpf}.
\end{proof}

\setcounter{thm}{0}\setcounter{equation}{0}
\section{Kernels with parameter}
\label{sec5}
We have derived estimates for $K, K^*$ and $
\cL Cf$. In this section we   modify the arguments for the parameter case.
The   requirement that $k\geq j$
in the H\"older spaces $\cL C^{k+\all,j}(\pd\Om)$
will be evident in  identity \re{coun1} below
and in the proof of \rl{ktap} for the Cauchy transform
with parameter.

\le{logd} Let $\Om$ be a bounded domain in $\cc$ with
$\pd\Om\in\cL C^1$.
 Let $\Gamma^\lambda$ embed  $\ov\Om$ onto $\ov{\Om^\lambda}$ with $\Gaa\in\cL C_*^{1,j}(\ov\Om)$.
Let $z^\la=\Gaa^\la(z)$ and $k>0$.
For $z,\zeta\in\ov\Om$ with $\zeta\neq z$,
\ga
\label{flze}
\Bigl|\f{1}{(\zeta^\mu - z^\mu )^k }-
\f{1}{(\zeta^{\lambda} - z^{\lambda}  )^k}\Bigr|\leq
 C_{1,0}\f{|\Gamma^{\mu}-\Gamma^{\lambda}|_1}{|\zeta-z|^k},\\
\label{flxe}
\Bigl|\f{1}{(x^\mu(\zeta)-x^\mu(z))^{k}}-\f{1}{(x^\la(\zeta)-x^\la(z))^{k}}\Bigr|
\leq C_* C_{1,0}\f{|x^\mu-x^\la|_1}{|\zeta-z|^k},
\\
 \bigl|\pd_\la^j
\log|\zeta^{\lambda} - z^{\lambda}|\bigr|\leq
 C_{1,j}^*,\quad   j\geq1,
\label{f1ze+-}
\\
\bigl|\pd_\mu^j\log|\zeta^\mu - z^\mu  |-\pd_\la^j
\log|\zeta^{\lambda} - z^{\lambda}|\bigr|\leq
C_{1,j}^*  |\Gamma^{\mu}-\Gamma^{\lambda}|_{1,j},\quad j\geq0,
\label{f1ze+}
\end{gather}
where \rea{flxe} is for $ \zeta,z\in\pd\Om$
and   under the assumptions that $|\pd_{\ta_z} x^\la|\geq
1/{C_*}$ and
$|x^\mu-x^\la|_1, |\zeta-z|$ are sufficiently small.
Assume further that $\Gaa^\la\in\cL B_*^{1+\all,j}(\pd\Om)$. Then for $\zeta\in\pd\Om$
\al
\label{flze2}&\bigl| \pd_\la^j\pd_{\ta_\zeta^\la}\arg(\zeta^\la-z^\la)\bigr|\leq
 C_{1+\alpha,j}^*
\f{|\zeta-z|^{1+\alpha}+\dist(z,\pd\Om)}{|\zeta-z|^2}. %,\\
%\label{flze3}&\bigl|\pd_\mu^j\pd_{\ta_\zeta^\mu}\arg(\zeta^\mu-z^\mu)-\pd_\la^j
%\pd_{\ta_\zeta^\la}\arg(\zeta^\la-z^\la)\bigr|\\
%& \qquad\quad\leq
%C_{1+\alpha,j}^* |\Gamma^{\mu}-\Gamma^{\lambda}|_{1+\all,j}
%\f{|\zeta-z|^{1+\alpha}+\dist(z,\pd\Om)}{|\zeta-z|^2}.
%\nonumber
\end{align}
\end{lemma}
\begin{proof}
Since $\Gamma^\lambda$ are embeddings with $\Gaa\in\cL C^{1,0}(\ov\Om)$, we have
\eq{zzeg}
|\zeta-z|/{C}\leq|\zeta^{\lambda} - z^{\lambda}|\leq C|\zeta-z|.
\eeq
 Take a path $\rho$ in $\ov\Om$ such that $\rho(0)=z,
\rho(1)=\zeta$ and $|\rho'|\leq C|\zeta-z|$.  When $j=0$, \re{f1ze+} follows from
$|\log(1+x)|\leq 2|x|$ for $|x|<1/2$ and
\al\label{zmzl}
|\pd_\mu^j(\zeta^\mu - z^\mu)-\pd_\la^j
(\zeta^{\lambda} - z^{\lambda})|&=\Bigl|\int_0^1\nabla(\pd_\mu^j\Gamma^{\mu} -\pd_\la^j
 \Gamma^{\lambda})(\rho(t))\cdot\rho'(t)\, dt\Bigr|\\
&
 \leq C|\Gamma^{\mu}-\Gamma^{\lambda}|_{1,j}
|\zeta  - z |.
\nonumber\end{align}
And \re{flze} follows from \re{zzeg}-\re{zmzl}  too.  By analogy, one can verify \re{flxe}.
For $j>0$, $\pd_\la^j
\log|\zeta^{\lambda} - z^{\lambda}|$ is a linear combination of
$$
Q^\la(\zeta,z)= \frac{\pd_\la^{i_1}(\zeta^\la-z^\la)\cdots\pd_\la^{i_a}(\zeta^\la-z^\la)
}
{(\zeta^\la-z^\la)^{a}}
$$
and their conjugates, where  $ a>1$ and $  i_l\leq j$. Using
$|\pd_\la^{j}(\zeta^\la-z^\la)|\leq C_{1,j}|\zeta-z|$,
we obtain  \re{f1ze+} by \re{zmzl} and  \re{f1ze+-}.  Note that $Q^\la$ may not extend
continuously  to $z=\zeta$.

To verify \re{flze2}, we choose   local $\cL C^{k+1+\all}$
   coordinates such that $\Om$
contains $[-1,1]\times(0,1]$ and $\pd\Om$ contains $[-1,1]\times\{0\}$.
Assume that $\zeta=\xi+i0$, $z\in\Om$  and $|\zeta|+|z|<1/2$.
Then $\gaa_1^\la(x,y)=\pd_x\gaa^\la(x,y)$ is tangent to  $\pd\Om^\la$ and
$$
\pd_{\ta^\la_\zeta}\arg(\zeta^\la-z^\la)=|\gaa_1^\la(\xi,0)|^{-1}
\IM\left\{\f{\gaa_1^\la(\xi,0)\ov{(\gaa^\la(\xi,0)-\gaa^\la(x,y))}}
{|\gaa^\la(\xi,0)-\gaa^\la(x,y)|^2}\right\}.
$$
Set
$\gaa_2^\la(x,y)=\pd_y\gaa^\la(x,y)$.  We have
\al
\nonumber
&\gaa^\la(x,y)-\gaa^\la(\xi,0)=\int_0^1\left\{(x-\xi)\gaa_1^\la(\xi+r(x-\xi),0)
 +y\gaa_2^\la(x,ry)\right\}\, dr,\\
\nonumber&\IM\left\{\gaa_1^\la(\xi,0)\ov{(\gaa^\la(\xi,0)-\gaa^\la(x,y))}\right\}\\
\nonumber&\hspace{5.6em}=
\IM\left\{\ov{\gaa_1^\la(\xi,0)}[(x-\xi)\hat R_1\gaa_1^\la(x,y,\xi)+y
R_1^*\gaa_2^\la(x,y,\xi)]\right\},\\
\label{hr1}
&\hat R_1\gaa_1^\la(x,y,\xi)=\int_0^1[\gaa_1^\la(\xi+r(x-\xi),0)-\gaa_1^\la(\xi,0)]\, dr, \\
\label{r1s}
&  R_1^*\gaa_2^\la(x,y,\xi)=\int_0^1 \gaa_2^\la (x,ry)\, dr.\end{align}
Therefore, $\pd_\la^j\pd_{\ta^\la_\zeta}\arg(\zeta^\la-z^\la)$ is a linear combination of
\begin{align}\label{i0i1}
\pd_\la^{i_0}|\gaa_1^\la(\xi,0)|^{-1}\IM&\Bigl\{\ov{\pd_\la^{i_1}
\gaa_1^\la(\xi,0)}[(x-\xi)\hat R_1\pd_\la^{i_2}
\gaa_1^\la(x,y,\xi)
\\  &\quad +y  R_1^*\pd_\la^{i_2}\gaa_2^\la(x,y,\xi)]\Bigr\}q_{i_3}^\la(\xi,x,y).
\nonumber\end{align}
Here $i_0+i_1+i_2+i_3=j$ and $
q_{i_3}^\la(\xi,x,y)=\pd_\la^{i_3}|\gaa^\la(\xi,0)-\gaa^\la(x,y)|^{-2}.
$
We can verify that
\gan
|\pd_\la^{i_0}|\gaa_1^\la(\xi,0)|^{-1}|\leq C_{1,j}, \quad
|\pd_\la^{i_1}\gaa_1^\la(\xi,0)|\leq C_{1,j}.
\end{gather*}
By the arguments for \re{f1ze+-}-\re{f1ze+}, we obtain
\gan
|q_{i_3}^\la(\xi,x,y)|\leq C|z-\zeta|^{-2}.
\end{gather*}
By \re{hr1}-\re{r1s}, we get $|\hat R_1\pd_\la^{i_2}\gaa_1^\la(x,y,\xi)|
\leq C_{1+\all,j}|x-\xi|^\all$ and
 $| R_1^*\pd_\la^{i_2}\gaa_2^\la(x,y,\xi)|\leq C_{1,j}$.
In \re{i0i1}, we have $y=\dist(z,\pd\Om)$ and $|x-\xi|\leq|z-\zeta|$.
Combining the above estimates, we get \re{flze2}.
\end{proof}

Given a family of continuous function $f^\lambda$ on $\pd\Om^\la$, let
$
\cL C^\lambda  f
$
be the Cauchy transform defined off $\pd\Om^\la$ by \re{cfz1}.
Let  $\cL C_+^\la f$ be its restrictions on   $
 \Om^\la$.
Denote by $W^\la f$ and $U^\la f$ the single and double layer potentials with moment $f^\la$
on $\pd\Om^\la$. Denote by $W_+^\la f$ and $U_+^\la f$  their restrictions on $\Om^\la$
and extensions to $\ov{\Om^\la}$ if continuous extensions exist.

It will be convenient to use notation
\ga
\label{norm3-} |u^\la|_{i+ \all,j}=\max_{l\leq j }|\pd_{\la}^{l}  (u^\la\circ\Gaa^\la)|_{i+\all},
\quad
  \|u^\la\|_{k+ \all,j}=\max_{i\leq j }|u^\la|_{k+\all-i,i},
  \\
\label{norm3} |u^{\mu}-u^\la|_{i+ \all,j}=\max_{l\leq j }| \pd_{\mu}^l    (u^{\mu}\circ\Gaa^\mu)
 - \pd_{\la}^l  (u^\la\circ\Gaa^\la)|_{i+\all},
 \\  \|u^{\mu}-u^\la\|_{k+ \all,j}=\max_{i\leq j }|u^{\mu}-u^\la|_{k-i+ \all,i},\quad j\leq k.
\label{norm4} \end{gather}
Define analogous norms by replacing $\Gaa^\la$ with $\gaa^\la$.

\begin{prop}\label{c0pa}
Let $\Gamma^{\lambda}$ embed $\ov\Om$ onto $\overline{\Omega^\lambda}$
 with $\Gaa\in\cL B^{k+1+\all,j}(\ov\Om)$. Let $j\leq k$ and $j\leq l\leq k+1$. With the norms
 defined by \rea{ukaj}-\rea{norm2} and \rea{norm3-}-\rea{norm4}, we have
\ga
\|\cL C_{+} f\|_{0,0}\leq
C_{1,0}|f|_{\all,0},\quad \|\cL C_{+} f\|_{l+\all,j}\leq
C_{k+1+\all,j}\|f\|_{l+\all,j},
\label{clc+}\\
\|\cL C_{+}^{\mu} f-\cL C_{+}^\lambda f\|_{0,0}\leq
C_{1,0}(|\Gamma^{\mu}-\Gamma^{\lambda}|_{1}|f^{\lambda}|_\all
+  \|f^\mu-f^\la\|_{\all,0}),
\label{clc+1}\\
\|\cL C_{+}^{\mu} f-\cL C_{+}^\lambda f\|_{l+\all,j}\leq
C_{k+1+\all,j}(\|\Gamma^{\mu}-\Gamma^{\lambda}\|_{k+1+\all,j}|f^{\lambda}|_{l+\all}
+  \|f^{\mu}-f^{\lambda}\|_{l+\all,j}).
\label{clc+2}\end{gather}
If $\pd\Om$ is real analytic, $\Gaa^\la(z)$ and
$f^\la\circ\Gaa^\la(z)$ are real analytic on $\ov\Om\times[0,1]$, then
$\cL C_+^\la f\circ\Gaa^\la(z)$ and $W^\la_+f\circ\Gaa^\la(z)$ are
real analytic on $\ov\Om\times
[0,1]$ too.
\end{prop}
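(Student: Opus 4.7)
The plan is to reduce everything to the non-parameter estimates of Lemma \ref{jumpf} by pulling back the Cauchy transform via $\Gaa^\la$. Writing $F^\la=f^\la\circ\gaa^\la$ and $z^\la=\Gaa^\la(z)$, one has
\begin{equation*}
\cL C^\la f(z^\la)=\f{1}{2\pi i}\int_{\pd\Om}\f{F^\la(\zeta)(\pd_\ta\gaa^\la)(\zeta)}{\gaa^\la(\zeta)-\Gaa^\la(z)}\, d\sigma(\zeta),\qquad z\in\ov\Om.
\end{equation*}
For $j=0$ the bound \eqref{clc+} follows from Lemma \ref{jumpf}(i)--(ii) at each fixed $\la$, the constants depending only on the $\cL C^{k+1+\all}$ norm of $\pd\Om^\la$, which is uniformly controlled by $\|\Gaa\|_{k+1+\all,0}$.

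For the higher-order bound in \eqref{clc+} I would differentiate under the integral. Spatial derivatives $\pd_{x^\la}^a$ are handled exactly as in Lemma \ref{jumpf}(ii), by integration by parts on $\pd\Om^\la$, which on $\pd\Om$ introduces $\cL C^{k+\all,j}$ factors arising from $\gaa^\la$ and its tangential derivatives. A $\pd_\la^b$-derivative, applied by Leibniz and the quotient rule, produces a finite sum of terms of the shape
\begin{equation*}
\f{\prod_{l=1}^{m}\pd_\la^{i_l}\bigl(\gaa^\la(\zeta)-\Gaa^\la(z)\bigr)}{\bigl(\gaa^\la(\zeta)-\Gaa^\la(z)\bigr)^{m+1}}\, H^\la(\zeta),\qquad i_l\geq 1,
\end{equation*}
where $H^\la$ gathers the remaining $\pd_\la$-derivatives of $F^\la\pd_\ta\gaa^\la$ and lies in a suitable $\cL C^{l-a+\all}$-space in $\zeta$. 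The key pointwise bound from the proof of Lemma \ref{logd}, namely $|\pd_\la^i(\gaa^\la(\zeta)-\Gaa^\la(z))|\leq C|\zeta-z|$, obtained by integrating $\pd_\la\Gaa^\la$ along a path from $z$ to $\zeta$, reduces the effective singularity back to $|\zeta-z|^{-1}$. Hence each summand is again a Cauchy-type integral, and the arguments behind Lemma \ref{jumpf}(ii) produce the claimed $\cL C^{l+\all}$ estimate.

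The difference bounds \eqref{clc+1}--\eqref{clc+2} are obtained by a parallel expansion. Writing symbolically
\begin{equation*}
\pd_\mu^b(\cdot)_\mu-\pd_\la^b(\cdot)_\la=\pd_\mu^b[(\cdot)_\mu-(\cdot)_\la]+[\pd_\mu^b-\pd_\la^b](\cdot)_\la,
\end{equation*}
the first bracket is controlled by $\|F^\mu-F^\la\|_{l+\all,j}$ and the second by $\|\Gaa^\mu-\Gaa^\la\|_{k+1+\all,j}$, using the pointwise kernel differences \eqref{flze} and \eqref{f1ze+} from Lemma \ref{logd}. The real-analytic assertion follows because, under the hypotheses, the integrand extends holomorphically in $(z,\la)$ to a complex neighborhood of $\ov\Om\times[0,1]$: one deforms $\pd\Om$ slightly into the complex plane (using $\pd\Om\in\cL C^\omega$) so that $\gaa^\la(\zeta)$ stays away from $\Gaa^\la(z)$, exactly as in the analytic part of Lemma \ref{jumpf}(ii). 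The single layer $W^\la_+f\circ\Gaa^\la$ is then treated through $\pd_z W=-i\cL C[\ov\ta f]$ together with integration, both of which preserve real analyticity.

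The main obstacle will be in the second step: organizing the combinatorial expansion of $\pd_{x^\la}^a\pd_\la^b$ applied to $F^\la\pd_\ta\gaa^\la\cdot(\gaa^\la-\Gaa^\la)^{-1}$ so that each resulting summand can be matched, via the Lemma \ref{logd} pointwise bound, to a Cauchy-type integral estimable by Lemma \ref{jumpf}(ii). The same bookkeeping, with $\Gaa^\mu-\Gaa^\la$ or $F^\mu-F^\la$ inserted at one factor at a time, governs the split between the two contributions in \eqref{clc+1}--\eqref{clc+2}; once the algebraic pattern is fixed, the constants fall out in the stated form.
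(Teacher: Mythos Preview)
Your outline for the finite-regularity bounds is on the right track, but the central step is not justified as written. The pointwise bound $|\pd_\la^i(\gaa^\la(\zeta)-\Gaa^\la(z))|\leq C|\zeta-z|$ reduces your modified kernel to size $|\zeta-z|^{-1}$, and that suffices for a sup-norm bound on the integral. It does \emph{not} by itself give $\cL C^\all$ regularity: Lemma~\ref{jumpf} obtains H\"older continuity via Hardy--Littlewood, which needs one more $z$-derivative and then the specific cancellation $\int_{\pd\Om^\la}(\zeta^\la-z^\la)^{-2}\,d\zeta^\la=0$; for a generic kernel of size $|\zeta-z|^{-1}$ no such conclusion holds. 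The paper's device is an integration by parts that turns $\pd_\la$ of a Cauchy transform back into a genuine Cauchy transform. Starting from
\[
\pd_\la(\cL C^\la f(z^\la))=\f{1}{2\pi i}\int\f{\pd_\la[\pd_{\ta_\zeta}\zeta^\la\,f^\la(\zeta^\la)]}{\zeta^\la-z^\la}\,d\sigma-\f{1}{2\pi i}\int\f{(\pd_\la\zeta^\la-\pd_\la z^\la)f^\la(\zeta^\la)}{(\zeta^\la-z^\la)^2}\,d\zeta^\la,
\]
one integrates the second term by parts in $\zeta$; two second-order derivatives cancel and one arrives at identity \eqref{coun1}, expressing $\pd_\la(\cL C^\la f)$ as a Cauchy transform of a density that has lost one derivative (either $\pd_\la f$ or $\pd_{\ta}f$). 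This is what makes the induction on $j$ close inside $\cL B^{l+\all,j}$. Your brute-force expansion recovers the same thing only after a further step you did not state: expanding each numerator $\prod_l(\pd_\la^{i_l}\gaa^\la(\zeta)-\pd_\la^{i_l}\Gaa^\la(z))$ into $z$- and $\zeta$-factors and recognizing each piece as $\pd_{z^\la}^m$ of a Cauchy transform of a $\cL C^{m+\all}$ density. That amounts to the same integration by parts done all at once; without it the appeal to Lemma~\ref{jumpf}(ii) is a gap.

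For the real-analytic assertion your sketch diverges from the paper in a second way. The analytic part of Lemma~\ref{jumpf}(ii) does not deform contours; it normalizes $\ov\ta\pd_\ta$ by a local holomorphic coordinate change, and the paper explicitly remarks that this cannot handle two operators ($\pd_\ta$ and $\pd_\la$) simultaneously. Instead the paper iterates \eqref{coun1} and its spatial analogue \eqref{coun2} to write $\pd_z^i\pd_{\ov z}^{j'}\pd_\la^{k-i-j'}(\cL C_+^\la f)$ as a sum of at most $3^k$ Cauchy transforms with explicitly bounded densities (formulae \eqref{coun3}--\eqref{nkdf}), and reads off the growth $C^k k!$ of Taylor coefficients directly. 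A contour-deformation argument may be workable, but complexifying both $z$ and $\la$ while keeping $\gaa^\la(\zeta)-\Gaa^\la(z)$ bounded away from zero for $\zeta$ ranging over all of $\pd\Om$ and $z$ near the boundary is a genuine issue you have not addressed. For $W_+^\la f$ the paper does proceed, as you suggest, via $\pd_{z^\la}W^\la f=-i\cL C_+^\la(\ov{\ta^\la}f^\la)$ and integration along a real-analytic path in $\ov\Om$.
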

\begin{proof}    Let $z\in \Om$. Take $z_*\in\pd\Om$ such that
 $|z_*-z|=\dist(z,\pd\Om)$. We have
$$
\cL C^\lambda f(z^\la)=f^{\lambda}(z_*^\la)+
\f{1}{2\pi i}\int_{\pd\Om}\f{f^{\lambda}( \zeta^\la)-f^{\lambda}(z_*^\la)}
{  \zeta^\la - z^\la }\, d\zeta^\la.
$$
Denote the last integral by $A^\la(z)$. By \re{flze} it is easy to see that
\aln
&|A^{\mu} -A^{\lambda}|_0\leq
 C_{1,0}(\|f^{\mu}-f^{\lambda}\|_{\all,0}+
|f^\lambda|_\all|\Gamma^{\mu}-\Gamma^{\lambda}|_1)\int_{\pd\Om}
\f{|\zeta-z_*|^{\all}}{|\zeta-z|}\, |d\zeta|.
\end{align*}
The last integral is bounded by a constant; indeed
when $\del=\dist(z,\pd\Om)$ is sufficiently small, for $z_*=\gaa(s)$ and $\zeta=\gaa(t)$
we have $|\zeta-z_*|\leq C|s-t|$ and $|z-\zeta|\geq(\delta+|t-s|)/C$.
This verifies \re{clc+1}. By \rl{jumpf}, $\cL C_+^\la f$ is continuous when $\la$
 is fixed. Then \re{clc+1} also implies that $\cL C_+f$
 is in $\cL C^{0,0}(\ov\Om_\Gamma)$. One can also verify the first inequality in \re{clc+}.
 Notice that the proof merely needs $\Gaa\in\cL C^{1,0}(\ov\Om)$. Next, we will verify
 \re{clc+2} and the second inequality in \re{clc+}. Note that \re{clc+2} and \rl{jumpf}
 implies that $\cL C_+f$ is in $\cL C^{l,j}(\ov\Om_\Gaa)$.

Denote by  $\pd_{z^\la}$ the derivative in $z$ on $\Om^\la$. By analoge of \rl{chainp+},
 it suffices
to estimate norms for    $\pd_{z^\la}\cL C_+^\la f$. We first consider the case where $j=0$.
Differentiate $\cL C^{\la}f$ and then
apply Stokes to transport the derivatives
to $f^\la(\gamma^{\lambda}(\zeta))$.   By \re{dzcf} we have  \aln
g^\la( z^\la)&\df\pd_{z^\la}^{l} \cL C^\lambda f(z^\la) =
\f{1}{2\pi i}  \int_{\zeta\in\pd\Om}\f{\tilde\pd_{\ta^\la}^{l}f^\lambda(\zeta^\la)
}{\zeta^\la-z^\la}d\zeta^\la
\df  \int_{\zeta\in\pd\Om}\f{h^\lambda(\zeta^\la)\,d\zeta^\la
}{\zeta^\la-z^\la},
\end{align*}
where $\tilde \pd_{\tau^\la}=
\ov{\ta^\la }\pd_{\ta^\la}$.
 We have
$$
\pd_z g^\lambda( z^\la)=
\pd_z\Gaa^\lambda(z)  I^\la(z), \quad I^\la(z)=\int_{\pd\Om}
\f{h^\lambda(\zeta^\la)-h^\lambda(z_*^\la)
}{(\zeta^\la-z^\la)^2}d\zeta^\la.
$$
Note that  by   the product role,
$\tilde\pd_{\ta^\la}^lf^\la(\zeta^\la)$ involves derivatives of order at most
$l$ in $\zeta^\la$
and $f^\la(\zeta^\la)$. Thus $|h^\la|_\all\leq
C_{k+1+\all}|f^\la|_{l+\all}$.  We can verify that
\aln
|I^\la(z)|&\leq C|h^\la|_\all
\int_{\pd\Om}|\zeta^\la-z^\la|^{\all-2}\, d\sigma
\leq C'|h^\la|_\all\int_{\pd\Om}|\zeta -z |^{\all-2}\, d\sigma
\\ &\leq C_{k+1+\all}|f^\la|_{l+\all}\dist(z,\pd\Om)^{\all-1}.
\end{align*}
Combining with the first inequality
 in \re{clc+}, we get its   second inequality   for $j=0$. Also, by \re{flze}
\aln
&|I^{\mu}(z)-I^\lambda(z)|
\leq C(\|h^\mu-h^\la\|_{\all,0}+|h^\lambda|_{\all}|\Gamma^{\mu}-\Gamma^\lambda|_{1})
\int_{\pd\Om}|\zeta-z|^{\all-2}\, d\sigma(\zeta).
\end{align*}
Thus, $|I^{\mu}(z)-I^\lambda(z)|\leq C\dist(z,\pd\Om)^{\all-1}$. For $l\leq k+1$ we obtain
\aln
\|g^{\mu}-g^{\lambda}\|_{\all,0}
\leq C(\|h^\mu-h^\la\|_{\all,0}+|h^\lambda|_{\all}|\Gamma^{\mu}-\Gamma^\lambda|_{k+1+\all}).
\end{align*}
Note that $\|h^\mu-h^\la\|_{\all,0}\leq C_{k+1+\all}(
\|f^{\mu}-f^\lambda\|_{k+1+\all,j}+|f^\la|_{k+1+\all}|\gaa^{\mu}-\gaa^\lambda|_{l+\all})$
for $l\leq k+1$.
This gives us \re{clc+2}   for $j=0$.

Assume that $j>0$ and \re{clc+2} is valid when $j$ is replaced by $j-1$.
 Here we need a crucial cancellation.
By \re{dsot},  we have  $d\gaa^\la=
\pd_{\tau}\gaa^\la \, d\sigma$, i.e., $d\zeta^\la=\pd_{\ta_\zeta}\zeta^\la\, d\sigma(\zeta)$.
Thus
\aln
\pd_{\la} (\cL C^\lambda f(z^\la))
&=\f{1}{2\pi i}\int_{\zeta\in\pd\Om} \f{\pd_\la[\pd_{\ta_\zeta} \zeta^{\lambda}
 f^\lambda(\zeta^\la)]
}{\zeta^\la-z^\la}d\sigma(\zeta)\\
&\quad -
\f{1}{2\pi i}   \int_{\zeta\in\pd\Om}\f{( \pd_{\la}\zeta^\la-\pd_\la z^\la )
f^\la(\zeta^\la)}
{(\zeta^\la-z^\la)^2}d\zeta^\la.\nonumber
\end{align*}
We apply  integration by parts to   the second term and write the above as
\aln
\f{1}{2\pi i}\int_{\pd\Om} \f{\pd_\la[\pd_{\ta_\zeta} \zeta^{\lambda} f^\lambda(\zeta^\la)]
}{\zeta^\la-z^\la}d\sigma(\zeta) -
\f{1}{2\pi i}   \int_{\pd\Om}\f{\pd_{\tau_\zeta}[( \pd_{\la}\zeta^\la-\pd_\la z^\la )
f^\la(\zeta^\la)]}
{\zeta^\la-z^\la}d\sigma(\zeta).
\end{align*}
Cancelling two second-order derivatives,  we arrive at
\al\label{coun1}
\pd_{\la} (\cL C^\lambda f(z^\la))
&=\f{1}{2\pi i}\int_{\zeta\in\pd\Om} \f{\pd_\la( f^\lambda(\zeta^\la))-
\pd_\la\zeta^\la(\pd_{\tau_\zeta}\zeta^\la)^{-1} \pd_{\tau_\zeta} ( f^\lambda(\zeta^\la))
}{\zeta^\la-z^\la}d\zeta^\la\\
\nonumber
&\quad +
\f{\pd_\la z^\la}{2\pi i}   \int_{\zeta\in\pd\Om}
\f{(\pd_{\tau_\zeta}\zeta^\la)^{-1}\pd_{\tau_\zeta}
(f^\la(\zeta^\la))}
{\zeta^\la-z^\la}d\zeta^\la.
\end{align}
Now \re{clc+2} follows from the induction hypothesis
 where $(j,l)$ is replaced with $(j-1,l-1)$.
By a simpler computation,  estimates \re{clc+} for $j>0$ follow from \re{coun1} too.

The   proof for real analyticity in
 \rl{jumpf} cannot be applied to the parameter case,  as generally we cannot normalize two
 differential operators simultaneously.  Instead, we will prove it by estimating
 Taylor coefficients.
We start with
\al\label{coun2}
\pd_{z} (\cL C^\lambda f(z^\la)) &=\f{\pd_z z^\la}{2\pi i}  \int_{\zeta\in\pd\Om}
\f{\ov{\tau_{\zeta}}\pd_\tau(f^\lambda(\zeta^\la))
}{\zeta^\la-z^\la}d\zeta^\la.
\end{align}
Analogous formula holds for $\pdoz(\cL C^\lambda f(z^\la))$.  By \re{coun1}-\re{coun2},
we express
\eq{coun3}
\pd_{z}^{i}\pdoz^j\pd_\la^{k-i-j}\cL C_+^\la f^\la(z^\la)=
\sum_{l=1}^{N_{i,j;k}} P_{i,j,k,l}^\la(z^\la)\cL C_+^\la\{ Q_{i,j,k ,l}^\la(\zeta^\la)\}(z^\la).
\eeq
Here $P_{i,j,k,l}^\la(z^\la)Q_{i,j,k,l}^\la(\zeta^\la)$ is the product of elements of the form
\ga
\nonumber\pd_z^{a_1 }\pdoz^{b_1}\pd_\la^{c_1+1} z^\la, \quad\pd_z^{a_2+1 }
\pdoz^{b_2}\pd_\la^{c_2}{z^\la},
\quad\pd_z^{a_3}\pdoz^{b_3+1}\pd_\la^{c_3}  {z^\la}; \\
\nonumber %\label{a4b4}
\pd_{\ta }^{a_4}\pd_\la^{b_4+1}\zeta^\la,\quad
 \pd_{\ta }^{a_5}\pd_\la^{b_5}(\pd_\ta\zeta^\la)^{-1},\quad\pd_{\ta }^{a_6}\ov{\tau_\zeta};
 \\
\nonumber\pd_{\ta }^{a_7}\pd_\la^{b_7} ( f^\la(\zeta^\la));\quad
  L(\zeta,z,\la)=(\pd_\la z^\la,\pd_{ z}{z^\la},\pd_{\ov z}{z^\la},
 \pd_\la\zeta^\la,(\pd_\ta\zeta^\la)^{-1}, \ov{\tau_\zeta}).
\end{gather}
Let us explain how the above terms are used.
We introduce $L$ that includes all first-order derivatives   appearing
in \re{coun1}-\re{coun2} and $\pd_{\ov z}z^\la$ in the formulae
analogous to $\re{coun2}$, except those of $f$. To count   the total of the orders of derivatives
 efficiently, we will   count
 the first-order derivatives  appearing  in $L$  separately,
when \re{coun1} or \re{coun2} is used each time.  Set $b_6=c_4=\cdots=c_7=0$. For the purpose
of  counting, we duplicate
  the above
terms associated to $(a_n,b_n,c_n)$ for $n<7$ and denote by $m_n$
the   number of the  copies associated to $(a_n,b_n,c_n)$
that appear in $P_{i,j,k,l}^\la Q_{i,j,k,l}^\la$. Since  $\pd_{\ta }^{a_7}\pd_\la^{b_7}
( f^\la(\zeta^\la))$ appears   once
in  $P_{i,j,k,l}^\la Q_{i,j,k,l}^\la$, we set
  $m_7=1$.
  By an abuse of notation, we have
not expressed the dependence of $m_n$   on $i,j,
k$, $l$, $ a_{n'}$, $b_{n'}$ and $c_{n'}$. Nevertheless, we have
$$ %\eq{dkmi}
d_{k}=\max_{i,j,l}\sum_{n=1}^7m_n(a_n+b_n+c_n)\leq k,\quad
\prod_{n=1}^7 (a_n!b_n!c_n!)^{m_n}\leq k!.
$$ %\eeq
Since $z^\la$ and $ f^\la(\zeta^\la)$ are real analytic, we have
\gan
|L|\leq C_0, \quad |\pd_z^{a}\pdoz^{b}\pd_\la^{c} z^\la|\leq (a+b+c-1)! C_0^{a+b+c}, \quad
a+b+c>0,\\
|\pd_{\ta }^{a}\pd_\la^{b}(\pd_\la \zeta^\la,(\pd_\ta\zeta^\la)^{-1},
\ov{\tau_\zeta}, f^\la(\zeta^\la))|\leq (a+b)!C_0^{a+b+1}.
\end{gather*}
Here the last inequality is obtained by using real analytic parameterization in arc-length.
Thus, the product of the terms
 in $P_{i,j,k,l}^\la Q_{i,j,k,l}^\la$, excluding these in $L$, is bounded in sup norm by
\eq{prod17}
\prod_{n=1}^7(a_n!b_n!c_n!)^{m_n}C_0^{m_n(a_n+b_n+c_n)}\leq C_0^{k}k!.
\eeq
Next, we count $l_k$, the maximum number of first-order   derivatives in $L$ which
appear in each $P_{i,j,k,l}^\la Q_{i,j,k,l}^\la$ as $i,j$ and $l$ vary. From \re{coun1},
taking one derivative in $\pd_\la$ produces
at most two terms in $L$; from \re{coun2}, taking one derivative in $z$ or
$\ov z$ produces two terms in $L$. Therefore,
$l_k\leq 2k+1.$  Thus, the product of the terms  in $L$ that appear
 in $P_{i,j,k,l}^\la Q_{i,j,k,l}^\la$ is bounded in sup norm by $C_0^{l_k}\leq C_0^{2k+1}$.
 Combining with \re{prod17}, we get
\eq{plql}
|P_{i,j,k,l}^\la Q_{i,j,k,l}^\la|_0\leq C_0^{l_k}
\prod_{n=1}^7(a_n!b_n!c_n!)^{m_n}C_0^{m_n(a_n+b_n+c_n)}\leq C_0^{3k+1}k!.
\end{equation}
 Finally, we count
the maximum number of terms in \re{coun3}.
  When we take one derivative in $\la$ on $\cL C^\la_+f$, we get   $3$ terms by using
   \re{coun1};
when we take one derivative in $z$ or $\ov z$ on $\cL C^\la_+f$, we   have just one term
in \re{coun2}. Therefore,
\eq{nkdf}
  N_k\df \max_{i,j} N_{i,j;k}\leq 3^k.
\eeq

We have
$|Q_{i,j,k-1,l}(\cdot,\cdot)|_{\all}\leq C_1|Q_{i,j,k-1,l}(\cdot,\cdot)|_{1}$. Taking a
 $\zeta$-derivative
on $P_{i,j,k-1,l}^\la\cdot Q_{i,j,k-1,l}^\la$ introduces at most $N_{k}$ terms of
the form $P_{i',j',k,l'}^\la\times Q_{i',j',k,l'}^\la$.    This shows that
\aln
|P_{i,j,k-1,l}(\cdot)|_{0}
|Q_{i,j,k-1,l}(\cdot)|_{\all}&\leq N_{k}C_1\max_{i',j',l'}|P_{i',j',k,l'}(\cdot)|_{0}
|Q_{i',j',k,l'}(\cdot)|_{0}
\\  &\leq C_13^{k}{k}!C_0^{3k+1}, \quad (\text{by \re{plql}, \re{nkdf}}).
\end{align*}
By \re{coun3},\re{clc+} and the above inequality, we obtain
\aln
|\pd_{z}^{i}\pdoz^j\pd_\la^{k-1-i-j}\{\cL C_+^\la f^\la(z^\la)\}|&
\leq N_{k-1}C_{1,0}|P_{i,k-1-i,l}(\cdot,\cdot)|_{0}
|Q_{i,k-1-i,l}(\cdot,\cdot)|_{\all} \\
&\leq
C_1C_{1,0}3^{2k-1}k!C_0^{3k+1}.
\end{align*}
Using
 $k!\leq i!j!(k-1-i-j)!3^{k-1}k$, we obtain
 the desired estimate on Taylor coefficients to show
 that $\cL C^\la_+ f^\la(z^\la)$ is real analytic on $\ov\Om\times[0,1]$.

It is clear that $W^\la f(z)$ is real analytic on $\Om\times [0,1]$.
 We need to show that it is real
analytic near  $(z_1,\la_1)\in  \ov\Om\times[0,1]$ for $z_1\in\pd\Om$.
 We use a local real analytic coordinates to find a real analytic function $\var(z_0,z,t)$
 defined on $U\times U\times[0,1]$
  such
  that $\var(z_0,z,0)=z_0$ and $\var(z_0,z,1)=z$, where $U$ is an open set containing $z_1$.
   Moreover, $\var(z_0,z,t)$ is
  in $\Om$ when $t\in(0,1)$
  and $z_0,z$ are in $U\cap\ov\Om$. Fix $z_0\in U\cap\Om$ and vary $z\in U\cap\ov\Om$. We have
 \aln
 W^\la f(\Gaa^\la(z))=W^\la f(\Gaa^\la(z_0))
   +2\RE\int_0^1 \{\pd_z((W^\la f)\circ\Gaa^\la)\}
 (\var(z_0,z,t))\pd_t\var(z_0,z,t)\, dt.
 \end{align*}
Since $\pd_{z^\la}W^\la f=-2iC^\la_+(\ov\ta^\la f^\la)$ and
$\pd_{\ov z^\la}W^\la f^\la$ are  real analytic in $(z,\la)\in\ov\Om
\times[0,1]$, then $\pd_zW^\la f$ is real analytic in $z$ and $\la$
 by the chain rule.
Thus, the   integrand in the above integral is   real analytic in $(z_0,z,\la,t)
\in (U\cap\ov\Om)^2\times[0,1]^2$. This shows that $W^\la f^\la(z^\la)$ is real analytic
in $(z,\la)\in\ov\Om
\times[0,1]$.
\end{proof}

We have seen that the kernels of $Uf$ and $Wf$
 behave better than that of $\cL Cf$ for spaces
of continuous functions. In the parameter case, we have the following analogue of \re{u+-f} and
\rl{jumpf} (iii).
\begin{prop}\label{c0paU}
Let $\Gamma^{\lambda}$ embed $\ov\Om$ onto $\ov{\Om^\la}$ with $\Gaa\in\cL C_*^{1,j}$. Suppose
that $\Gaa^\la$ preserve the orientation.
 Assume that $f\in\cL C_*^{0,j}(\pd\Om_\gaa)$. Then $Wf\in\cL C_*^{0,j}(\ov\Om_\Gaa)$
 and
\aln
\pd_\la^jW^\la f(z)&=\f{1}{\pi}\sum_{i=0}^j\binom{j}{i}\int_{\pd\Om}\pd_\la^i
(|\pd_\ta\gaa^\la(\zeta)|f^\la(\zeta^\la))\pd_\la^{j-i}
\{\log|\zeta^\la-z^\la|\}\, d\sigma(\zeta), \  z\in\ov\Om.
\end{align*}
 Assume further that $\pd\Om\in\cL C^{1+\all}$ and $\Gaa\in\cL B_*^{1+\all,j}$.
 Then $Uf\in\cL C_*^{0,j}(\ov\Om_\Gaa)$ and  on $\pd\Om$
\aln %\label{pdju}
\pd_\la^jU_+^\la f(z)&=\f{1}{\pi}\sum_{i=0}^{j}\binom{j}{i}\int_{\pd\Om}
\pd_\la^i\{f^\la(\zeta^\la)\}\pd_\la^{j-i}
\{|\pd_\ta\gaa^\la(\zeta)|\pd_{\ta^\la}\arg(\zeta^\la-z^\la)\}\, d\sigma(\zeta)\\
\nonumber&\quad +
\pd_\la^j (f^\la (z^\la)).
\end{align*}
 Under
 the norms defined by \rea{ukaj}-\rea{norm2} and \rea{norm3-}-\rea{norm4},
\gan %\label{uf0j}
 |W_+ f|_{0,j}\leq
C_{1,j}^*|f|_{0,j},\
|W_+^{\mu} f-W_+^\lambda f|_{0,j}\leq
C_{1,j}^*(|\Gamma^{\mu}-\Gamma^{\lambda}|_{1,j}|f^\la|_{0,j}
+  |f^\mu-f^\la|_{0,j}).%\label{wmfwl}
\end{gather*}
%In particular, if $\pd\Om\in\cL C^{k+1+\all}$,
% $\Gaa\in\cL B^{k+1+\all,j}(\ov\Om)$ {\rm (}resp. $\cL C^{k+1+\all,j}(\ov\Om)${\rm)} and
%$f\in\cL B^{k+\all,j}(\pd\Om_\gaa)$ {\rm (}resp. $\cL C^{k+\all,j}(\pd\Om_\gaa)${\rm)}
%with $k\geq j$, then $W_+f\in\cL B^{k+1+\all,j}(\ov{\Om}_\Gaa)$ {\rm (}resp. $\cL C^{k+1+\all,j}(\ov\Om_\Gaa)${\rm)}.
In particular, if $\pd\Om\in\cL C^{k+1+\all}$,
 $\Gaa\in\cL B^{k+1+\all,j}(\ov\Om)$ and
$f\in\cL B^{k+\all,j}(\pd\Om_\gaa)$
with $k\geq j$, then $W_+f\in\cL B^{k+1+\all,j}(\ov{\Om}_\Gaa)$; the same assertion holds
if $\cL C$ substitutes for $\cL B$.
\end{prop}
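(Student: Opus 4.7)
The plan is to push every integral back to the fixed contour $\pd\Om$ via $d\sigma^\la(\zeta^\la)=|\pd_\ta\gaa^\la(\zeta)|\,d\sigma(\zeta)$ and $d\zeta^\la=\pd_\ta\gaa^\la(\zeta)\,d\sigma(\zeta)$, then differentiate under the integral using Leibniz, and quote the pointwise estimates of \rl{logd} to control each resulting factor. First I would write
$$
W^\la f(z^\la)=\f{1}{\pi}\int_{\pd\Om} f^\la(\zeta^\la)|\pd_\ta\gaa^\la(\zeta)|\log|\zeta^\la-z^\la|\,d\sigma(\zeta).
$$
The integrand is a product of factors to which Leibniz applies. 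Differentiation under the integral is legitimate because $\log|\zeta^\la-z^\la|$ is locally integrable in $\zeta$ uniformly in $z\in\ov\Om$ (from \re{zzeg}), while its $\pd_\la^i$-derivatives for $1\leq i\leq j$ are bounded by $C_{1,j}^*$ by \re{f1ze+-}, and the remaining factors are bounded in $\cL C^0(\pd\Om)$ together with their $\la$-derivatives by the hypothesis $\Gaa\in\cL C_*^{1,j}$ and $\{f\}\in\cL C_*^{0,j}(\pd\Om_\gaa)$. Dominated convergence then yields both the asserted formula for $\pd_\la^j W^\la f$ and continuity of $\la\mapsto \pd_\la^i W^\la f$ in $\cL C^0(\ov\Om)$; the bound $|W_+f|_{0,j}\leq C^*_{1,j}|f|_{0,j}$ comes from estimating each Leibniz term via \re{f1ze+-}, and the difference estimate $|W_+^\mu f-W_+^\la f|_{0,j}\leq C^*_{1,j}(|\Gaa^\mu-\Gaa^\la|_{1,j}|f^\la|_{0,j}+|f^\mu-f^\la|_{0,j})$ follows identically by applying \re{f1ze+} termwise.

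Next I would treat the double-layer potential the same way: for $z\in\Om$,
$$
U^\la f(z^\la)=\f{1}{\pi}\int_{\pd\Om} f^\la(\zeta^\la)|\pd_\ta\gaa^\la(\zeta)|\pd_{\ta^\la}\arg(\zeta^\la-z^\la)\,d\sigma(\zeta).
$$
Here the kernel is singular, but \re{flze2} gives
$$
|\pd_\la^i\pd_{\ta_\zeta^\la}\arg(\zeta^\la-z^\la)|\leq C^*_{1+\all,j}\f{|\zeta-z|^{1+\all}+\dist(z,\pd\Om)}{|\zeta-z|^2}
$$
for $0\leq i\leq j$, which is the same type of bound that made \rp{pjumpf} work when $j=0$. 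Repeating the cancellation argument from \rp{pjumpf} for each term produced by Leibniz on $\pd_\la^j$ (i.e.\ adding and subtracting the value at $z_*$ and using \re{nlog} together with the normalization $\int K\,d\sigma=1$) shows that $\pd_\la^j U^\la f$ extends continuously from $\Om^\la$ to $\ov{\Om^\la}$, that the boundary value is exactly the formula stated with the jump contribution $\pd_\la^j(f^\la(z^\la))$, and that the map $\la\mapsto \pd_\la^i U_+^\la f$ is continuous in $\cL C^0(\ov\Om)$. This is where the strengthened hypothesis $\Gaa\in\cL B_*^{1+\all,j}$ is essential.

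For the final implication concerning $\cL B^{k+1+\all,j}(\ov\Om_\Gaa)$-regularity of $W_+f$, I would combine the $\la$-derivative formula above with the spatial identity $\pd_{z^\la}W^\la f=-i\cL C_+^\la[\ov{\ta^\la}f^\la]$ and its $\ov{z^\la}$-conjugate. The chain-rule lemmas \ref{chainp+} and \ref{chainp++} show that $\ov{\ta^\la}f^\la\in\cL B^{k+\all,j}(\pd\Om_\gaa)$, and then \rp{c0pa} delivers $\cL C_+^\la[\ov{\ta^\la}f^\la]\in\cL B^{k+\all,j}(\ov\Om_\Gaa)$, which combines with \rl{chainp+} to give $W_+f\in\cL B^{k+1+\all,j}(\ov\Om_\Gaa)$; the $\cL C$ version is identical since the two previous ingredients each have $\cL C$ analogues. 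The main obstacle I anticipate is the $\la$-derivative analysis of $U^\la f$ at the boundary: although the pointwise bound \re{flze2} is structurally the same as in the unparameterized case, the cancellation argument of \rp{pjumpf} must be carried out carefully for every Leibniz term, uniformly in $\la$, in order to justify interchanging limits and $\pd_\la$ up to order $j$ while staying in $\cL C^0(\ov\Om)$.
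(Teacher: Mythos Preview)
Your proposal is correct and follows essentially the same route as the paper: pull everything back to $\pd\Om$, apply Leibniz, justify differentiation under the integral via the bounds \re{f1ze+-}--\re{flze2} of \rl{logd}, run the cancellation argument of \rp{pjumpf} termwise for $U$, and pass to higher regularity of $W_+f$ through $\pd_{z^\la}W^\la f=-i\cL C_+^\la(\ov{\ta^\la}f^\la)$ and \rp{c0pa}. The one place the paper is more explicit than your sketch is the boundary formula for $\pd_\la^j U_+^\la f$: for the Leibniz terms with $l=j-i>0$ derivatives landing on the kernel, the paper verifies directly (via the arctan representation and a limit $\e\to 0$) that $\int_{\pd\Om}\pd_\la^l\{a^\la(\zeta)\pd_{\ta_\zeta^\la}\arg(z_0^\la-\zeta^\la)\}\,d\sigma(\zeta)=0$ for $z_0\in\pd\Om$, which is what allows you to expand the subtracted integrals and obtain the stated formula with jump exactly $\pd_\la^j(f^\la(z^\la))$; you should make that step explicit when you write it up.
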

\begin{proof} We write $d\sigma^\la=a^\la\, d\sigma$ on $\pd\Om$. Recall from \re{dsot}
that $a^\la(z)=|\pd_\ta\gaa^\la(z)|$.
Fix $z_0\in\pd\Om$.
For $z\in\Om$ we have $ \pd_\la^jU^\la f(z)= \sum\binom{j}{k}I_{k}(z)$ with
\aln
&  I_{k}^\la(z)=\f{1}{\pi}\int_{\pd\Om}\pd_\la^{k}\{f^\la(\zeta^\la)\}
\pd_\la^{j-k}\{a^\la(\zeta)\pd_{\ta_\zeta^\la} \arg(z^\la-\zeta^\la)\}\, d\sigma.
\end{align*}
 Using
%$$\int_{\pd\Om}\f{|\zeta-z|^{1+\all}+\dist(z,\pd\Om)}{|\zeta-z|^2}\, d\sigma(\zeta)<C$$
%and
\re{flze2} for $\gaa\in\cL B^{1+\all,j}_*$, we get for $l\leq j$
\ga\label{1ad2-}
\int_{\zeta\in\pd\Om,|\zeta-z_0|<\e}|\pd_\la^{l}\pd_{\ta_\zeta^\la}\arg(z^\la-\zeta^\la)|\, d\sigma^\la\leq C\e^\all,\quad z_0\in\pd\Om,\\
\int_{\zeta\in\pd\Om}|\pd_\la^{l}\pd_{\ta_\zeta^\la}\arg(z^\la-\zeta^\la)|\, d\sigma^\la<C,\quad
z\in\cc.
\label{1ad2}\end{gather}
Then, by analogue of \re{s3s5},  from $f\in\cL C^{0,j}_*$, $\gaa\in\cL B^{1,j}_*$
and \re{1ad2} we get
\al\label{dlac}
&\lim_{\pd\Om\not\ni z\to z_0}\int_{\pd\Om} \{\pd_\la^{k}\{f^\la(\zeta^\la)\}-\pd_\la^{k}\{f^\la(z_0^\la)\}\}
\pd_\la^{j-k}\{a^\la(\zeta)\pd_{\ta_\zeta^\la} \arg(z^\la-\zeta^\la)\}\, d\sigma(\zeta)\\
&\qquad =\int_{\pd\Om} \{\pd_\la^{k}\{f^\la(\zeta^\la)\}-\pd_\la^{k}\{f^\la(z_0^\la)\}\}
\pd_\la^{j-k}\{a^\la(\zeta)\pd_{\ta_\zeta^\la} \arg(z_0^\la-\zeta^\la)\}\, d\sigma(\zeta),
\nonumber
\end{align}
where the convergence is uniform in $\la$.
Let $0<l\leq j$.
Note that $$\int_{\pd\Om}
\pd_\la^{l}\{a^\la(\zeta)\pd_{\ta_\zeta^\la} \arg(z^\la-\zeta^\la)\}\, d\sigma(\zeta)=0,\quad
z\not\in\pd\Om.
$$
 For $z=z_0\in\pd\Om$ and $\gaa\in\cL C_*^{1,j}$, the last integral equals
\aln
&\lim_{\e\to0}
\int_{\zeta\in\pd\Om,|\zeta-z_0|>
\e}
\pd_\la^{l}\{a^\la(\zeta)\pd_{\ta_\zeta^\la} \arg(z_0^\la-\zeta^\la)\}\, d\sigma\\
&\qquad\quad =\lim_{\e\to0}\pd_\la^{l}\{\pi-\arg(\gaa^\la\circ\hat\gaa(\e_2)-\gaa^\la\circ\hat\gaa(0))-
\arg(\gaa^\la\circ\hat\gaa(0)-\gaa^\la\circ\hat\gaa(\e_1 ))\}\\
&\qquad\quad =\lim_{\e\to0}\pd_\la^{l}\left\{\pi-\arctan
\f{\int_{0}^1(y^\la\circ\hat\gaa)'( r\e_2)\, dr}{\int_{0}^1(x^\la)'(r\e_2)\, dr}+
\arctan\f{\int_{0}^1(y^\la)'(r\e_1)\, dr}{\int_{0}^1(x^\la)'( r\e_1)\, dr}\right\}=0.
\end{align*}
 Here we have used
 $\arg(x+iy)=\arctan(y/x)\mod\pi$ and a local $\cL C^1$
 parameterization $\hat\gaa$ of $\pd\Om$ near $z_0$
 with $\hat\gaa(0)=z_0$. Also,
  $\pd\Om$ intersects $\{|\zeta-z_0|=\e\}$ at $\hat\gaa(\e_1)$,
   $\hat\gaa(\e_2)$ for $\e$ sufficiently small. We have also assumed without loss of generality that $\pd_\ta x^\la(z_0)\neq0$.
Expanding both sides of \re{dlac} we get the formula for $\pd_\la^jU^\la_+f$. Combining
the formula with \re{1ad2-}, we see that $U^\la_+f(z_0^\la)$ is continuous in $\la$.
 Then, the uniform
convergence of $U^\la f(z^\la)$ as $z\to z_0$ yields   $U_+f\in\cL C_*^{0,j}(\ov\Om_\Gaa)$.

Write $\pd_\la^jW^\la f$ as a linear combination of
$$
h_{j_1j_2}^\la(z)=\int_{\pd\Om}\pd_\la^{j_1}\{a^\la(\zeta) f^\la(\zeta^\la)\}
\pd_\la^{j_2}\log|z^\la-\zeta^\la|\, d\sigma, \quad j_1+j_2=j.
$$
Using \re{f1ze+-} and \re{dtsc}, we obtain
$$
|\pd_\la^{j_2}\log|\zeta^\la-z^\la||\leq C, \  j_2>0; \quad
|\log|\zeta^\la-z^\la||\leq C(|\log|t-s||+1),
$$
where $z=\hat\gaa(s)+h\nu(s)$, $\zeta=\hat\gaa(t)$, and $\hat\gaa$ is a parameterization
of $\pd\Om$. We conclude easily that $h_{j_1j_2}^\la(z)$ are continuous on $\ov\Om\times[0,1]$.
This verifies the   formula for $\pd_\la^jW^\la_+f$.

By the  formulae of $\pd_\la^jW^\la f$, we obtain    $W_+f\in
\cL C_*^{0,j}(\ov\Om_\Gaa)$ and the desired estimate for $W_+f$  by \re{f1ze+-}-\re{f1ze+},
  $\int_{\pd\Om^\la}|\log|\zeta^\la-z^\la||\, d\sigma^\la<C$ and $\dist(z,\pd\Om)
  \int_{\pd\Om}|\zeta-z|^{-2}\,
d\sigma<C$ for $z\in\Om$. The assertion on higher order derivatives following from
 $\pd_zWf=-i\cL C[ \ov\ta f]$
and \rp{c0pa}.
 \end{proof}

To prepare our estimates in section~\ref{sec7}
for integral equations with parameter, we use the rest of the section to
extend \rl{kta} to the parameter case.

For convenience, we will  use the following  difference operators
$$
\delta^{\la\mu}f=f^{\mu}-f^\la, \quad
\del_{t_1t_2}g=g(t_2)-g(t_1).
$$
For clarity, we will also write the above as $\del^{\la\mu}f^\bullet$
and $\del_{t_1t_2}g(\cdot)$ where $\cdot$ and $\bullet$ indicate the  variables used in the operators.
  Both satisfy the product rule to the extent that
\gan
\delta^{\la\mu}(f g)=\delta^{\la\mu}f g^\nu+
 f^\nu\delta^{\la\mu}g,\quad
\del_{t_1t_2}(fg)
=\del_{t_1t_2}fg(t_3)+f(t_3)\del_{t_1t_2}g,
\end{gather*}
where $\nu=\la$ or $\mu$ (and two $\nu$'s are different),
and  $t_3=t_1$ or $t_2$.
 For $\gaa\in\cL B^{k+1+\all,j}(\pd\Om)$, with  the above notation we have
\gan
K_{k-j,j}^\la(\zeta^\la,z^\la)=
\pd_\la^j\pd_{\tau^\la_{z}}^{k+1-j}\{\arg(\zeta^\la-z^\la)\},\qquad
\zeta^\la,z^\la\in\pd\Om^\la,\\
 \delta^{\la\mu}K_{k-j,j}(\zeta,z)=
K_{k-j,j}^\mu(\zeta^\mu,z^\mu)-K_{k-j,j}^\la(\zeta^\la,z^\la), \quad \zeta,z\in\pd\Om.
\end{gather*}
\le{ktap} Let $\gaa^\la$ embed  $\pd\Om$ onto $\pd\Om^\la$ with
 $\gaa\in\cL C^{k+1+\all,j}$ and $k\geq j$. Then on $\pd\Om^\la\times\pd\Om^\la$ and off its diagonal,
\ga
\bigl|K_{k-j,j}^\la(\zeta^\la,z^\la)
\bigr|
\leq C_{k+1+\all,j}|\zeta-z|^{\all-1},
\label{kta3p--}\\
\bigl|K_{k-j,j}^\la(\zeta^\la,z_2^\la)
-K_{k-j,j}^\la(\zeta^\la,z_1^\la)
\bigr|
\leq C_{k+1+\all,j}\frac{|z_2-z_1|^\all}{|\zeta-z_1|},\label{kta3np}\\
\bigl|\delta^{\la\mu}K_{k-j,j}(\zeta,z)
\bigr|
\leq C_{k+1+\all,j}\|\gaa^\mu-\gaa^\la\|_{k+1+\all,j}|\zeta-z|^{\all-1},
\label{kta3p-}\\
\bigl|\delta_{z_1z_2}\delta^{\la\mu}K_{k-j,j}(\zeta,\cdot)
\bigr|
\leq C_{k+1+\all,j}\|\gaa^\mu-\gaa^\la\|_{k+1+\all,j}\frac{|z_2-z_1|^\all}{|\zeta-z_1|},
\label{kta3p}\\
\bigl|\del^{\la\mu}K_{0,j}(z_2,\zeta)-\del^{\la\mu}K_{0,j}(z_1,\zeta)
\bigr|
\leq C_{1+\all,j}^*\|\gaa^\mu-\gaa^\la\|_{1+\all,j}\frac{|z_2-z_1|}{|\zeta-z_1|^{2-\all}},
\label{nkta3p}
\end{gather}
where
 \rea{kta3np}, \rea{kta3p} and \rea{nkta3p} are for  $|\zeta-z_1|>2|z_2-z_1|$.
\ele
\begin{proof}   It suffices to verify \re{kta3p--}-\re{kta3p}
for $\zeta, z_1, z_2,z$ near a point $w\in\pd\Om$ at which $|\pd_{\ta_{w}}x^\la|\neq 0$.
We may assume that $w=0$ and $\pd\Om$ contains $(-1,1)+i0$.
We may  assume that $\|\gaa^\mu-\gaa^\la\|_{1,0}$ is sufficiently small; otherwise,
 \re{kta3p-}-\re{kta3p}
follow  from \re{kta3p--}-\re{kta3np}. We may further assume  that $\pd\Om^\la$
is embedded through $\gaa^\la(t)$ such that,
for $|t|\leq 1$, $(x^\la)'>1/C$ and $|(x^\la(s)-x^\la(t))^{-1}|\leq C|s-t|^{-1}$.
In the following, we assume that $s,t,t_1$
and $t_2$ are in $(-1,1)$.
Define $q^\la(s,t)=(y^\la(s)-y^\la(t))/{(x^\la(s)-x^\la(t))}$ and \gan
\hat K_{k-j,j}^\la(s,t)=
\pd_\la^j\pd_{t}^{k+1-j}q^\la(s,t),\quad
 \delta^{\la\mu}\hat K_{k-j,j}(s,t)=
\hat K_{k-j,j}^\mu(s,t)-\hat K_{k-j,j}^\la(s,t).
\end{gather*}
 By \re{tazf}, we have
$(\pd_{\ta^\la}u^\la)(\gaa^\la(t))=|\pd_t \gaa^\la|^{-1}\pd_t(u^\la(\gaa^\la))$. Hence
$$ %\eq{kkjj}
K_{k-j,j}^\la(\gaa^\la(s),\gaa^\la(t))=\sum_{j'\leq j,k'\leq k}Q_{j'k'}^\la(t)
\hat K_{k'-j',j'}^\la(s,t).
$$ %\eeq
Here $Q_{j'k'}^\la$  are $C^\infty$ functions in  $|\pd_t\gaa^\la|^{-1},
\pd_t\gaa^\la,\ldots, \pd_t^{k+1}\gaa^\la$, and
$$
q^\la(s,t)=\f{\int_0^1  (y^\la)'(s+r(t-s))\, dr}{\int_0^1  (x^\la)'(s+r(t-s))\, dr}.
$$   We have for $Q^\la=Q^\la_{j'k'}$
\gan
|Q^\la(t)|\leq C_{k+1,j}, \quad |Q^\la(t_2)-Q^\la(t_1)|\leq C_{k+1+\all,j}|t_2-t_1|^\la,\\
|(Q^\mu-Q^\la)(t_2)-(Q^\mu-Q^\la)(t_1)|\leq C_{k+1+\all,j}
\|\gaa^\mu-\gaa^\la\|_{k+1+\all,j}|t_2-t_1|^\la.
\end{gather*}
Therefore, to show \rea{kta3p--}-\rea{nkta3p}, it suffice to verify them when
 $K^\la(\zeta^\la,z^\la),\zeta$, and $z$
are replaced by $\hat K^\la(s,t),s$, and $t$, respectively.

Recall that $\hat R_kf(s,t)=\int_0^1\{f^{(k)}(t+r(s-t))
-f^{(k)}(t)\}\, dr.$
We apply  formula \re{keli} and obtain for $(x^\la(s)-x^\la(t))q^\la(s,t)=y^\la(s)-y^\la(t)$,
\aln
(x^\la(s)&-x^\la(t))^{k+2-j}\pd_t^{k+2-j}q^\la(s,t)\\
&=(s-t)^{k+1-j}\Bigl\{
P_0^\la\hat R_{k+1-j}y^\la(s,t)+\sum_{i=1}^{k+1-j}
P_i^\la
\hat R_ix^\la(s,t)\Bigr\},
\end{align*}
where $P_i^\la(s,t)$ are polynomials in $s-t$,
$\pd_t(x^\la,y^\la)$, $\ldots$, $\pd_t^{k+1-j}(x^\la,y^\la)$,
$\pd_s^{k+1-j}\gaa^\la$, and $ x^\la(s)-x^\la(t)$.
Hence $\pd_\la^j\pd_t^{k+1-j}q^\la(s,t)$ is a linear combination of
$L^\la(s,t)$ of the form
$$
(s-t)^{k+1-j}\frac{\pd_\la^{j_1}(x^\la(s)-x^\la(t))\cdots\pd_\la^{j_a}(x^\la(s)-x^\la(t))}
{(x^\la(s)-x^\la(t))^{k+2-j+a}}\pd_\la^{j_b}P_i^\la\pd_\la^{j_c}\hat R_i(x^\la,y^\la)(s,t).
$$
Here $j_1+\cdots+j_a+j_b+j_c=j$ and $i+j\leq k+1$. Assume  that $j'\leq j$.
 We first bound each term in $L^\la(s,t)$
as follows:
\ga\label{dljp1}
|(\pd_\la^{j}\gaa^\la(s)-\pd_\la^{j}\gaa^\la(t))|\leq C|s-t|,\quad
|(x^\la(s)-x^\la(t))^{-1}|\leq C|s-t|^{-1}, \\
\label{dljp2}|\pd_\la^{j}\hat R_i\gaa^\la(s,t)|\leq
C|s-t|^{\all},\quad  |\pd_\la^{j}
P_i^\la(s,t)|\leq C, \quad i+j\leq k+1.
\end{gather}
By \re{dljp1}-\re{dljp2} we get $|L^\la(s,t)|\leq C|s-t|^{\all-1}$, which gives us \re{kta3p--}.
We now assume that $|s-t_2|\geq2|t_2-t_1|$. Then
\ga
\label{dljp3}
|(s-t_2)-(x-t_1)|\leq |s-t_1|^{1-\all}|t_2-t_1|^\all,\\
\label{dljp3+}|\pd_\la^{j}x^\la(t_2)-\pd_\la^{j}x^\la(t_1)|\leq
C |s-t_1|^{1-\all}|t_2-t_1|^\all,\\
|(x^\la(s,t_2))^{-1}-(x^\la(s,t_1))^{-1}|\leq C|s-t_1|^{-1-\all}|t_2-t_1|^\all,\\
\label{dljp4}|\pd_\la^{j}(P_i^\la,\hat R_i \gaa^\la)(s,t_2)-\pd_\la^{j}
(P_i^\la,\hat R_i \gaa^\la)(s,t_1)|
\leq C\|\gaa\|_{k+1+\all,j}|t_2-t_1|^\all.
\end{gather}
Here $i+j\leq k+1$. Comparing the exponents in \re{dljp1}-\re{dljp2} with the ones in
 \re{dljp3}-\re{dljp4}, and using the exponent in \re{kta3p--},
we obtain   \re{kta3np} by a simple computation.

 Applying $\del^{\la\mu}$ to each term in $L^\la(s,t)$, we get
\ga
\label{dljp5}|\delta^{\la\mu}( \pd_\bullet^{j}x^\bullet (s)-
\pd_\bullet^{j}x^\bullet (t))|\leq C\|\gaa^\mu-\gaa^\la\|_{k+1,j}|s-t|,\\
\label{dljp5+}|\delta^{\la\mu}(( x (s)- x (t))^{-1})|
\leq C\|\gaa^\mu-\gaa^\la\|_{k+1,j}|s-t|,\\
\label{dljp6-}|\delta^{\la\mu}(\pd_\bullet^{j}P_i^\bullet(s,t))| \leq
C\|\gaa^\mu-\gaa^\la\|_{k+1,j}, \\
\label{dljp6}
\ |\delta^{\la\mu}((\pd_\bullet^{j}\hat R_i^\bullet\gaa) (s,t))|\leq
C\|\gaa^\mu-\gaa^\la\|_{k+1+\all,j}|s-t|^{\all},\quad i+j\leq k+1.
\end{gather}
We see that \re{dljp5}-\re{dljp6} and \re{dljp1}-\re{dljp2} differ by
a factor $\|\gaa^\mu-\gaa^\la\|_{k+1+\all,j}$, as    \re{kta3p--} and \re{kta3p-} do. A simple
computation gives us
 \re{kta3p-}.
We have
\aln
\del^{\la\mu}(x(s)-x(t))^{-1}=(x^\mu(t)-x^\mu(s))^{-1}(x^\la(s)-x^\la(t))^{-1}
\del^{\la\mu}(x(s)-x(t)).
\end{align*}
Note that $\del_{t_1t_2}$ and $\del^{\la\mu}$ commute. Assume that $|s-t_1|>2|t_2-t_1|$. Then
\aln
&|\del_{t_1t_2}\del^{\la\mu}(x(s)-x(\cdot))^{-1}|\leq 2C|t_2-t_1||s-t_1|^{-2}|
\gaa^\mu-\gaa^\la|_{0}|s-t_1|\\
&\quad\quad +C|s-t_1|^{-2}|\gaa^\mu-\gaa^\la|_{1}|t_2-t_1|\leq 3C
|\gaa^\mu-\gaa^\la|_{1}|s-t_1|^{-1-\all}|t_2-t_1|^\all.
\end{align*}
Therefore,
 \ga
\label{dljp7} |
\delta^{\la\mu}\delta_{t_1t_2} (\pd_\bullet^{j}\gaa^\bullet(s)-
\pd_\bullet^{j}\gaa^\bullet (\cdot)) |
\leq C|\gaa^\mu-\gaa^\la|_{1,j}|s-t_1|^{1-\all}|t_2-t_1|^\all,\\
\label{dljp7+}
|\delta^{\la\mu}\delta_{t_1t_2}(x(s,\cdot))^{-1}|\leq C|\gaa^\mu-\gaa^\la|_{1,0}
|s-t_1|^{-1-\all}|t_2-t_1|^\all,\\
\label{dljp8} |
\delta^{\la\mu}\delta_{t_1t_2}\pd_\bullet^{j}( P_i^\bullet,\hat R_i \gaa^\bullet)(s,\cdot)|
\leq C\|\gaa^\mu-\gaa^\la\|_{k+1+\all,j}|t_2-t_1|^\all.
\end{gather}
Here $ i+j\leq k+1$. Comparing
\re{dljp3+}-\re{dljp4} with \re{dljp1}-\re{dljp2}
and \re{dljp7}-\re{dljp8} with \re{dljp3}-\re{dljp4},
we see that applying $\del^{\la\mu}$   introduces
a factor $|\gaa^\mu-\gaa^\la|_{k+1+\all,j}$, as shown in \re{kta3np} and \re{kta3p}.
A simple computation gives us   \re{kta3p}.

To verify \re{nkta3p}, we   start  with
$$
\hat K^\la(t,s)=\IM\f{\ov{\pd_s\gaa^\la}\int_t^s(\pd_r\gaa^\la -\pd_s\gaa^\la)\, dr}
{|\pd_s\gaa^\la||\gaa^\la(s)-\gaa^\la(t)|^2}.
$$
Then $\pd_\la^j\hat K^\la(t,s)$ is a linear combination of $J^\la(s,t)=
A_1^\la(s)A_2^\la(s,t)A_3^\la(s,t)$  with
\gan
A_1^\la(s)=\pd_\la^{j_1}(|\pd_s\gaa^\la|^{-1}\ov{\pd_s\gaa^\la}),\quad
A_2^\la(s,t)=\pd_\la^{j_2}(|\gaa^\la(s)-\gaa^\la(t)|^{-2}),\\
A_3(s,t)=\pd_\la^{j_3}\int_t^s(\pd_r\gaa^\la -\pd_s\gaa^\la)\, dr
\end{gather*}
and $j_1+j_2+j_3=j$.  Then
$$
|A_1^\la|\leq C, \  |A_2^\la(s,t)|\leq C|s-t|^{-2}, \  |A_3^\la(s,t)|\leq C|s-t|^{1+\all},
\ |J^\la(s,t)|\leq C|s-t|^{\all-1}.
$$
Assume that $|s-t_1|\geq 2|t_2-t_1|$.  As in the proof of \re{kta2}, we write
\aln
& \int_{t_2}^{s}(\pd_r\gaa^\la -\pd_{s}\gaa^\la)\, dr
-\int_{t_1}^{s}(\pd_r\gaa^\la -\pd_{s}\gaa^\la)\, dr\\
&\quad\quad=
 \int_{t_2}^{t_1}(\pd_r\gaa -\pd_{t_1}\gaa)\, dr
- \int_{t_2}^{t_1}(\pd_{t_1}\gaa^\la -\pd_{s}\gaa^\la)\, dr.
\end{align*}
Applying $\pd_\la^j$ and then $\del^{\la\mu}$ to the above, we get
\gan
| A_3(s,t_2)-A_3(s,t_1)|\leq C|\gaa^\la|_{1+\all,j}|t_2-t_1||s-t_1|^{\all},
\\ |\delta^{\la\mu}\del_{t_1t_2} A_3(s,\cdot)|
\leq C|\gaa^\mu-\gaa^\la|_{1+\all,j}|t_2-t_1||s-t_1|^{\all}.
\end{gather*}
We can also verify that
\gan
|A_2^\la(s,t)|\leq C|s-t|^{-2}, \quad|\del^{\la\mu}A_2(s,t)|\leq C|\gaa^\mu-\gaa^\la|_{1,j}
|s-t|^{-2},\\
|\del_{t_1t_2}A_2^\la(s,\cdot)|\leq C\f{|t_2-t_1|^\all}{|s-t|^{1+\all}},
\quad|\del_{t_1t_2}\del^{\la\mu}A_2(s,\cdot)|\leq C|\gaa^\mu-\gaa^\la|_{1,j}
\f{|t_2-t_1|^\all}{|s-t|^{1+\all}}.
\end{gather*}
By a simple computation, we get \re{nkta3p}. \end{proof}

Define $
\cL K_{k-j,j}^{\la}\var(z^\la)=\int_{\pd\Om^\la}\var^\la(\zeta^\la)
K_{k-j,j}^\la(z^\la,\zeta^\la)\, d\sigma^\la$ and
\gan
\cL K_{k-j,j}^{\la*}\var(z^\la)=\int_{\pd\Om^\la}\var^\la(\zeta^\la)
K^\la_{k-j,j}(\zeta^\la,z^\la)\, d\sigma^\la.
\end{gather*}
\pr{ktapi} Let   $k\geq j$ and $0\leq\beta\leq\all$. Let $\all'=\all$
for $\beta>0$ and $\all'<\all$ for $\beta=0$.
Let $\Om$ be a bounded domain in $\cc$ with $\pd\Om\in\cL C^{k+1+\all}$.
Let $\gaa^\la$ embed  $\pd\Om$ onto $\pd\Om^\la$ with
 $\gaa\in\cL B^{k+1+\all,j}$. Suppose that $k\geq j$ and $k+1\geq l\geq j$.
 Then
\ga\label{ktapi1}
|\cL K_{0j'}\var|_{\all,j-j'}\leq C_{1+\all,j}^*|\var|_{0,j-j'},\quad
|\cL K_{0j'}^*\var|_{\all',j-j'}\leq C_{1+\all,j}^*|\var|_{0,j-j'},\\
\label{ktapi2}
|\del^{\la\mu} K_{0j'}\var|_{\all,j-j'}\leq C_{j+1+\all,j}
(|\del^{\la\mu}\var|_{0,j-j'}+|\del^{\la\mu}\gaa|_{j+1+\all,j}),\\
\label{ktapi4}|\cL K_{ij'}^*\var|_{k-i+\all',j-j'}\leq C_{k+1+\all,j}|
\var|_{j-j'+\beta,j-j'}, \\
\label{ktapi6}|\del^{\la\mu}K_{ij'}^{*}\var|_{k-i+\all',j-j'}
\leq C_{k+1+\all,j}(|\del^{\la\mu}\var |_{j-j'+\beta,j-j'}
+\|\del^{\la\mu}\gaa \|_{k+1+\all',j}),\\
\label{ktapi5}|\cL K_{ij'}\var|_{l-i+\all,j-j'}\leq C_{k+1+\all,j}
|\var|_{j-j'+1+\beta,j-j'},
\\
\label{ktapi7}|\del^{\la\mu}K_{ij'}\var|_{l-i+\all',j-j'}
\leq C_{k+1+\all,j}(|\del^{\la\mu}\var|_{j-j'+1+\beta,j-j'}
+\|\del^{\la\mu}\gaa\|_{k+1+\all,j}).
\end{gather}
\epr
\begin{proof}
Recall from \re{dsot} that $a^\la(\zeta)=|\pd_{\ta_\zeta}\gaa^\la|$
 and $d\sigma^\la(\zeta^\la)=a^\la(\zeta)\, d\sigma(\zeta)$
on $\pd\Om$.
Since $|\pd_\la^jK^\la(\zeta^\la,z^\la)|\leq C|\zeta-z|^{\all-1}$, by the mean value theorem
we can change the order of differentiation and integration
in $\pd_\la^{j-j'}\cL K_{0j'}^{\la}\var$. The latter
is then a linear combination of
$$
\int_{\pd\Om}(a^\la(\zeta))^{-1}\pd_\la^{j_1}(a^\la(\zeta)\var^\la
(\zeta^\la))\pd_\la^{j_2}K_{0j'}^\la(z^\la,\zeta^\la)\, d\sigma^\la, \quad j_1+j_2=j-j'.
$$
By replacing $(a^\la(\zeta))^{-1}\pd_\la^{j_1}(a^\la(\zeta)\var^\la
(\zeta^\la))$ with $ \var^\la
(\zeta^\la)$,
it suffices to verify \re{ktapi1}  when $j'=j$; analogously, we only need to verify
\re{ktapi1}-\re{ktapi7} for $j'=j$.

We have $|\cL K^\la_{0,j}\var(z^\la)|\leq C|\var^\la|_0
\int_{\pd\Om}|\zeta-z|^{\all-1}\, d\sigma\leq C'|\var^\la|_0$ and by
\re{kta1}-\re{kta2}
\al
\label{dlmk}
&\Bigl|\int_{\pd\Om}\varphi^\la(\zeta^\la)(K_{0,j}^\la(z_2^\la,\zeta^\la)
-K_{0,j}^\la(z_1^\la,\zeta^\la))\, d\sigma^\la\Bigr|
\\
\nonumber
&\quad\leq
C|a^\la\var^\la|_0\Bigl\{\int_{|\zeta-z_1|<3|z_2-z_1|}
 2 |\zeta-z_1|^{\all-1}\, d\sigma(\zeta)\\
\nonumber
&\qquad +\int_{|\zeta-z_1|>|z_2-z_1|} |z_2-z_1|
 |\zeta-z_1|^{2-\all}\, d\sigma(\zeta)\Bigr\}\leq C'|\var^\la|_0|z_2-z_1|^\all,
\end{align}
which gives us \re{ktapi1}. We have
\aln
\del^{\la\mu}\cL K_{0,j}\var(z)&=\int_{\pd\Om}\del^{\la\mu}(a(\zeta)
\var(\zeta))K_{0,j}^\mu(z^\mu,\zeta^\mu)\, d\sigma
\\
&\quad +\int_{\pd\Om}a^\la(\zeta^\la)\var^\la(\zeta^\la)\del^{\la\mu}
K_{0,j}(z,\zeta)\, d\sigma
\df I_1(z)+I_2(z).\end{align*}
And $|I_1(z)|\leq C(|\var^\mu-\var^\la|_0+|\gaa^\mu-\gaa^\la|_1)$ by \re{kta3p--}.
By analogue of \re{dlmk},
we get $
|I_1|_\all\leq C(|\var^\mu-\var^\la|_0+|\gaa^\mu-\gaa^\la|_1)$.
Using \re{kta3p-} and \re{nkta3p}  we get $|I_2|_{\all,0}
\leq C|\var^\la|_0|\del^{\la\mu}\gaa|_{1+\all,0}$.
This shows  \re{ktapi2}.

 By
the chain rule \re{tazf}, that $K_{i,j}^{\la*}\var$ satisfies \re{ktapi4}-\re{ktapi6}
if and only if $\pd_{\ta^\la}^{k-i-j}K_{i,j}^{\la*}\var$ satisfies  estimates
\re{ktapi4}-\re{ktapi6} (with $i=k,j'=j$).
By \re{psar+}, $K_{k-j,j}^{\la*}\var$ equals $\pd_{\ta^\la}^{k-i-j}(K_{i,j}^{\la*}\var)$.
(The proof of \re{psar+} is still valid when   \re{kta3p--} substitutes for
 \re{kta1}.) Hence, we have reduced   \re{ktapi4}-\re{ktapi6}  to the case
 where $i=k-j$ and $j'=j$.
Using $|K_{k-j,j}^\la(\zeta^\la,z^\la)|\leq C|\zeta-z|^{\all-1}$, we obtain
$|\cL K_{k-j,j}^{\la*}\var|_0\leq C|\var|_0$.
For the H\"older norm,
we recover a loss of regularity in Kellogg's arguments~\ci{Kezeeib} by decomposing
\aln \cL K_{k-j,j}^{\la*}\var(z_2)&-\cL K_{k-j,j}^{\la*}\var(z_1)=
 \varphi^\la(z_1^\la)\int_{\pd\Om}
\{ K_{k-j,j}^{\la}(\zeta^{\la},z_2^{\la})- K_{k-j,j}^{\la}
(\zeta^{\la},z_1^{\la})\}\, d\sigma^\la(\zeta^\la)
\\
&+\int_{\pd\Om}(\varphi^\la(\zeta^\la)-\varphi^\la(z_1^\la))
\{ K_{k-j,j}^{\la}(\zeta^{\la},z_2^{\la})- K_{k-j,j}^{\la}
(\zeta^{\la},z_1^{\la})\}\, d\sigma^\la(\zeta^\la).
\end{align*}
The first integral equals $\cL K_{k-j,j}^{\la*}1(z_2)-\cL K_{k-j,j}^{\la*}1(z_1)$.
The second integral is bounded by  $|\var^\la|_\beta$ times
$$\int_{|\zeta-z_1|\leq 3|z_2-z_1|}|\zeta-z_1|^{\alpha-1}\, d\sigma(\zeta)+
\int_{|\zeta-z_1|>2|z_2-z_1|}\f{|z_2-z_1|^\alpha}{|\zeta-z_1|^{1-\beta}}
\, d\sigma(\zeta).$$
Here the sum does not exceed $C|z_2-z_1|^\all$ when $\beta>0$. If $\beta=0$,
it does not exceed
$C_{\all'}|z_2-z_1|^{\all'}$ for any $\all'<\all$. We have
\aln
\cL K_{k-j,j}^{\la*}1(z)
&=\int_{\pd\Om}a^\la(\zeta)\pd_{\ta_z^\la}^{k-j}\pd_\la^j\bigl\{\cL K^{\la}(\zeta^\la,z^\la)\bigr\}\, d\sigma(\zeta)\\
&=\pd_{\ta_z^\la}^{k-j}\int_{\pd\Om}a^\la(\zeta)\pd_\la^j\bigl\{\cL K^{\la}(\zeta^\la,z^\la)\bigr\}\, d\sigma(\zeta)\\
&=\sum_{l\leq j}C_{jl}\pd_{\ta_z^\la}^{k-j}\pd_\la^{j-l}\int_{\pd\Om}\pd_\la^la^\la(\zeta) \cL K^{\la}(\zeta^\la,z^\la) \, d\sigma(\zeta).
\end{align*}
By \rl{k1rg} and \rl{c0pa}, we get for $b_l^\la=(a^\la)^{-1}\pd_\la^la^\la$
$$|\cL K^{\la*}b_l|_{k-j+\all,j-l}\leq C,\quad |\cL K^{\la*}b_l-\cL K^{\la*}b_l|_{k-j+\all,j-l}\leq C\|\gaa^\mu
-\gaa^\la\|_{k+1+\all,j}.$$
We have verified \re{ktapi4}.
We have
\aln
\del^{\la\mu}\cL K_{k-j,j}^*\var(z)&=\int_{\pd\Om}\del^{\la\mu}(a(\zeta)\var(\zeta))
K_{k-j,j}^\mu(\zeta^\mu,z^\mu)\, d\sigma
\\
&\quad +\int_{\pd\Om}a^\la(\zeta)\var^\la(\zeta^\la))\del^{\la\mu}
K_{k-j,j}^*(\zeta,z)\, d\sigma.\end{align*}
By analogue of estimation for $\cL K_{k-j,j}^{\la*}\var$, we obtain \re{ktapi6}
by   \re{kta3np} and \re{kta3p}.
Finally, we obtain
\re{ktapi5}-\re{ktapi7}  by \re{ktapi1}-\re{ktapi6}, and
 $\pd_{\ta^\la}\cL K^\la\varphi=-\cL K^{\la*}(\pd_{\ta^\la} \var^\la)$.
\end{proof}

\setcounter{thm}{0}\setcounter{equation}{0}
\section{Null spaces of $I\pm\cL K$ and $I\pm\cL  K^*$}\label{sec6}
In this section, we   describe     results on integral equations for
the Dirichlet and Neumann problems.
Lacking a
  reference to the precise regularity in derivatives  on solutions to
   the two problems, we   derive  some details. The estimates will be used in
   arguments for the parameter case in section~\ref{sec7}.
As mentioned in section~\ref{sec1}, we     reduce the $\cL C^1$ regularity of solutions,
which is an important step in Kellogg's proof~\ci{Kezeeib},
to the integral equations for the Dirichlet problem to $\cL C^0$ regularity  of the integral
equations for the Neumann problem.

\pr{bt}  Let $\pd\Om\in\cL C^{k+1+\all}$  with $k\geq0$ and $0<\all<1$.
Let $0\leq\beta\leq\all$.
\bppp
\item Let  $p>1/\all$.  Then
$$
|\cL K\var|_{\all-1/p}\leq C_{1+\all}|\var|_{L^p}, \quad |\cL K^*\var|_{\all'}
\leq  C_{1+\all}C_{\all'}|\var|_{L^p}
$$
for any $\alpha'\leq\all-\f{1}{p}$ with $\all'<\alpha$.
\item Let $\cL L$ be one of $\cL K,-\cL K,\cL K^*$, and $-\cL K^*$. Then
\gan
|\var|_{L^p}\leq C_{1+\all}C_p(|\var|_{L^1}+|\var+\cL L\var|_{L^p}),\quad
p>1,\\
|\var|_{\beta}\leq C_{1+\all}(|\var|_{L^1}+|\var+\cL L\var|_{\beta}).
\end{gather*}
\item Let $\cL L=\cL K^*$ or $-\cL K^*$. Then  $
|\var|_{k+\beta}\leq C_{k+1+\all}(|\var|_{L^1}+|\var+\cL L\var|_{k+\beta}).
$
 \item  Let $\cL L=\cL K$ or $-\cL K$ and  $1\leq l\leq k+1$. Assume that $l\geq2$ or
 $\pd_\ta\var\in L^1$. Then
  $
|\var|_{l+\beta}\leq C_{k+1+\all}(|\var|_{L^1}+|\pd_\ta\var|_{L^1}+|\var+\cL L\var|_{l+\beta}).
$
 \eppp
\epr
\begin{proof}
(i). We adapt Kellogg's arguments  in the proof of \rl{ktapi}.
Let $1/p+1/q=1$. Decompose $\cL K\varphi(z_2)-\cL K\varphi(z_1)$ as
$$
\Bigl\{\int_{|\zeta-z_1|<2|z_2-z_1|}+\int_{|\zeta-z_1|>2|z_2-z_1|}\Bigr\}
\varphi(\zeta)(K(z_2,\zeta)-K(z_1,\zeta))\, d\sigma(\zeta).
$$
We estimate   first integral   by  $|K(z_j,\zeta)|\leq C|\zeta-z_j|^{\alpha-1}$ and get
\al\label{kz2z}
&\left\{\int_{|\zeta-z_1|<2|z_2-z_1|}|K(z_2,\zeta)|^q+|K(z_1,\zeta)|^q\,
d\sigma(\zeta)\right\}^{1/q}
\leq C |z_2-z_1|^{\alpha-1+\f{1}{q}}.
\end{align}
We estimate the second integral by \re{kta2}, i.e., $|K(z_2,\zeta)-K(z_1,\zeta)|
\leq C|z_2-z_1||\zeta-z_1|^{\all-2}$ for $|\zeta-z_1|>2|z_2-z_1|$.
Thus
\eq{logt}
\left\{\int_{|\zeta-z_1|>2|z_2-z_1|}|K(z_2,\zeta)-K(z_1,\zeta)|^q\,
d\sigma(\zeta)\right\}^{1/q}\leq C
|z_2-z_1|^{\alpha-\f{1}{p}}.
\eeq
Therefore,   $\varphi\in   L^p$ implies $\cL K\varphi\in \cL C^{\alpha-\f{1}{p}}$.

We now  estimate $\cL K^*f$, for which we use \rl{kta}. Thus, when $K$ is replaced by $K^*$ we still
have \re{logt} for $1\leq p<\infty$ and  \re{kz2z}. However,  for $p=\infty$,
$$
\int_{|\zeta-z_1|>2|z_2-z_1|}|K(\zeta,z_2)-K(\zeta,z_1)|\, d\sigma(\zeta)\leq C
|z_2-z_1|^\alpha(1+|\log|z_2-z_1||),
$$
which results in $\cL K^*\var\in\cL C^{\all'}$.

(ii). We follow some standard   estimates for compact integral operators
(\cite{Fonifi}, p.~120; \ci{Miseze},
p.~178).
 Define $T_L\var=\int\var(\zeta)L(z,\zeta)\, d\sigma(\zeta)$.
Let $\chi(z,\zeta)=1$ for $|z-\zeta|<\e/2$ and $\chi(z,\zeta)=0$ for $|z-\zeta|>\e$.
Let $p>1$ and $1/p+1/{q}=1$.  We have
$$
\int_{\pd\Om}\int_{\pd\Om}|\varphi(\zeta)|^p\f{d\sigma(\zeta)\,
d\sigma(z)}{|z-\zeta|^{1-\all}}\leq C_0\all^{-1}|\varphi|_{L^p}^p.
$$
Thus, we obtain $|T_{\chi L}\varphi|_{L^p}\leq C_0\all^{-1}
\e^{\all(1-\f{1}{p})}|\varphi|_{L^p}$ from
$$
\left|\int_{\pd\Om} \varphi(\zeta)\chi L(z,\zeta)\, d\sigma(\zeta)\right|\leq
\left(\int_{\pd\Om} |\varphi(\zeta)|^p\f{d\sigma(\zeta)}{|\zeta-z|^{1-\all}}\right)^{1/p}
\left(\int_{|z-\zeta|<\e} \f{d\sigma(\zeta)}{|\zeta-z|^{1-\all}}\right)^{1/{q}}.
$$
Therefore, $I+T_{\chi L}\colon L^p\to L^p$ has an inverse with norm   $<2$
 when $C_0\all^{-1}\e^{\all(1-\f{1}{p})}<1/2$.
Since $(1-\chi)L$ is continuous, it is easy to obtain
$$
|T_{(1-\chi)L}\varphi|_{L^\infty}\leq C_\e|\varphi|_{L^1}.
$$
Using
$
\varphi=(I+T_{\chi L})^{-1} (I+T_L)\varphi -(I+T_{\chi L})^{-1}  T_{(1-\chi)L}\varphi,
$
we estimate two inverses and obtain
$$
|\varphi|_{L^p}\leq C|\varphi+\cL L\varphi|_{L^p}+C_\e|\varphi|_{L^1}.
$$
When $\beta=0$, we  get $\var\in  L^\infty$ and hence $\var\in\cL C^0$  by (i). Assume that
$\beta>0$. Using $\varphi=(\varphi+\cL L\varphi)-\cL L\varphi$, we obtain
$|\varphi|_{\beta/2}\leq C|\varphi+\cL L\varphi|_{\beta/2}+C'|\varphi|_{L^1}$,
from which we get   $|\varphi|_\beta\leq C_1|\varphi+\cL L\varphi|_{\beta}+
C_2|\varphi|_{\beta/2}
\leq C_3|\varphi+\cL L\varphi|_{\beta}+C_4|\varphi|_{L^1}$.

(iii). It follows from \re{ktapi1} with $j=0$ and (ii).

(v).  When $k\geq l\geq2$, we know that $K$ is of class $\cL C^1$. Hence $\var\in\cL C^1$ if
$\var+\cL K\var$ is additionally of class $\cL C^1$.
   Since $\pd_\ta\var\in L^1$, by \rl{dsar} (ii) we get
$
\pd_\ta \varphi (z)\mp\cL K^* \pd_\ta \varphi=\pd_\ta (\varphi\pm\cL K\varphi).
$
  The rest follows from (ii)-(iii).
 \end{proof}
For applications to integral equations with parameter (\rl{kl}), we emphasize that
the constants $C_{k+1+\all}$ in \rp{bt}
depend only on $|(\hat\gaa')^{-1}|_0$ and $|\hat\gaa|_{k+1+\all}$ if $\pd\Om$ is
parameterized by $\hat\gaa$.
\pr{kern} Let $\pd\Om\in\cL C^{1+\all}$.  Let $e_0=1$ on $\pd\Om$.
For $i>0$, let $e_i=1$ on $\gaa_i$ and $e_i=0$ on $\pd\Om\setminus\gaa_i$. Let
 $0\leq\beta\leq\all$.
\bppp
\item
 Let $L$ be one of $K,-K,K^*,$ and $-K^*$. Then $\var+\cL L\var=\psi\in\cL C^\beta$
 admits an $L^1$
solution $\var$ if and only if
$\psi\perp\ker(I+\cL L^*)$.
All  $L^1$ solutions $\var$ are in $\cL C^\beta$.
\item
 $\{e_1,\ldots, e_m\}$ spans   $\ker(I+\cL K)$.
And $e_0$ spans $\ker(I-\cL K)$.
\item  $\ker(I+\cL K^*)\cap\operatorname{ker}(I+\cL K)^\perp=\{0\}$ and
$\ker(I-\cL K^*)\cap\operatorname{ker}(I-\cL K)^\perp=\{0\}$.
 \item $\ker(I+\cL K^*)$ is spanned by   $\{\phi_1,\ldots, \phi_m\}$,
  where $\phi_i$ satisfy $\int_{\gaa_i}\phi_j\, d\sigma=\del_{ij}$
for $i,j>0$ and $\int_{\pd\Om}\phi_i\, d\sigma=0$ for $i>0$. Moreover, $W{\phi_1},\ldots,
W\phi_m$
are locally constant on $\pd\Om$ and vanish on outer boundary of $\pd\Om$,  and
 $(W{\phi_i}|_{\gaa_j})_{1\leq i,j\leq m}$ is non-singular when $m>0$.
 \item
 $\ker(I-\cL K^*)$ is spanned by $\phi_0$ and $W\phi_0$ is constant on $\pd\Om$.
  Moreover, $\phi_{0}$ vanishes on $\pd\Om\setminus \gaa_0$,
  $\int_{\pd\Om}\phi_0\, d\sigma=1$,
  and $\phi_0$ depends only on $\gaa_0$.
\eppp
\epr
\begin{proof}
(i-iii).
   The first assertion follows from
  the compactness of $\cL L$   on $L^2$ (\cite{Miseze}, p.~162, p.~167). That
   $\var\in\cL C^\beta$
  follows from \rp{bt} (ii).
The proof of (ii) is in\ci{Fonifi} (p.~135).
For (iii), assume that $\psi\in\ker(I+\cL K^*)\cap\operatorname{ker}(I+\cL K)$.
We have $\psi+\cL K^*\psi=0$
and by (i) $\psi=\var+\cL K^*\var$. For $\psi\in\ker(I-\cL K^*)\cap\operatorname{ker}(I-\cL K)$,
we have $\psi-\cL K^*\psi=0$
and $\psi=\var-\cL K^*\var$.
In both cases, we have $\var,\psi\in\cL C^\all$. Then $W\var$ and $W\psi$
 are in $\cL C^{1+\all}$ by \rl{jumpf} (iii). One can
 show that $\psi=0$; see~\cite{Fonifi} (p.~137), where
  the use of Green's identities merely requires that $\pd\Om,$
   $W\var,W\psi$ be of class $\cL C^{1+\all}$.

 (iv). By compactness of $\cL L$, we have $ \dim\ker(I+\cL L^*)=\dim
 \ker(I+\cL L)=1$ (\cite{Fonifi}, p.~24). Note that if $\phi_1,\ldots, \phi_m$
 span $\ker(I+\cL K^*)$,
 the matrix $A=(\int_{\gaa_i}\phi_j\, d\sigma)_{1\leq i,j\leq m}$ must have
 rank $m$. Indeed if $\var=c_1\phi_1+\cdots+c_m\phi_m$
  is orthogonal to $e_1,\ldots, e_m$, then by (i) and (iii), $\var=0$.
 With $A$ being non-singular, we can normalize $\phi_i$ such that $A$ is the identity matrix.
 This verifies the first assertion.
 To show that $W\phi_i$ are locally constant on $\pd\Om$,
we integrate $\phi_i+\cL K^*\phi_i=0$ and get $\int_{\pd\Om}\phi_i\, d\sigma=0$ for
 $i>0$.
 This shows that
 $\phi_i\in\ker(I+K^*)$. Hence, $W{\phi_i}$ is locally constant on $\Om'$
and vanishes on the unbounded component of $\Om'$. By the continuity of $W\phi_i$,
it is constant on the inner boundary of $\pd\Om$ and vanishes on the outer boundary.
Assume for the sake of contradiction that
 $(W{\phi_i}|_{\gaa_j})_{1\leq i,j\leq m}$ is singular. Since $W\phi_i$ are
 constants on $\gaa_i$ and vanish on $\gaa_0$. Then $W\phi_1,\ldots, W\phi_m$
 are linearly dependent on $\pd\Om$. Therefore, for some $c_i$ which are not
 all zero, we have
  $W(c_1\phi_1+\cdots+c_m\phi_m)=0$ on $\pd\Om$. This implies that
  $c_1\phi_1+\cdots+c_m\phi_m$ is in $\ker(I+K^*)\cap\ker(I-K^*)$. Since $\phi_1,\ldots,\phi_m$
  form a basis, we get   $c_i=0$ for all $i$, a contradiction.

 (v). By (iii), we know that if $\phi_0$ spans  $\ker(I-\cL K^*)$
 then $\int_{\pd\Om}\phi_0\, d\sigma\neq0$. Let $\phi_0$ be the unique element
 in $\ker(I-\cL K^*)$
 such that $\int_{\pd\Om}\phi_0\, d\sigma=1$.  We want to
 show that $\phi_0=0$ on $\gaa_j$ for $j>0$ and that
  $\phi_0$ depends only on $\gaa_0$.

Let $\Om_0$ be
the bounded domain bounded by
outer boundary  $\gaa_0$ of $\Om$. Let $\phi$, with $\int_{\gaa_0}\phi\, d\sigma=1$,
span
$\ker(I-\cL K_{0}^*)\subset L^2(\pd\Om_0)$. Here
$\cL K_0^*(\zeta,z)=\f{1}{\pi}\pd_{\ta_\zeta}\arg(\zeta-z)$  for $\zeta,z\in\pd\Om_0$.
 Let $\hat W{\phi }$ be the simple-layer distribution with
density $\phi $ on $\gaa_0$.
Since $\hat W{\phi }$ is constant on $\Om_0$,
then $\pd_{\nu}\hat W{\phi}=0$ for the normal vector $\nu$ of
any $\cL C^1$ curve $\gaa$ in $\Om_0$.
This shows that $\ker(I-\cL K^*)$ is spanned by $\tilde\phi$, if
$\tilde\phi$  equals  $\phi$  on $\gaa_0$ and is zero on $\pd\Om\setminus\gaa_0$.
The  condition $\int_{\pd\Om}\phi_0\, d\sigma=1$
implies that $\phi_0=\tilde\phi$.
\end{proof}
For convenience, we will use
$ \{e_1,\ldots, e_m\},\{e_0\}, \{\phi_1,\ldots, \phi_m\}$, and $\{\phi_0\}$
for bases of $\ker(I+\cL K)$, $\ker(I-\cL K)$, $\ker(I+\cL K^*)$, and
$\ker(I-\cL K^*)$, respectively.
\le{klem} Let $\pd\Om\in\cL C^{1+\all}$ and    $0\leq\beta\leq\all$. Let $L=K$ or $-K$.
 If $\var+\cL {L}\var =g $ is in   $\cL C^{1+\beta}$ and $\var\perp\ker(I+\cL L)$,
 then $\var\in\cL C^{1+\beta}$ and it is determined by
 \gan
%\label{vahva}
 \var=\hat\var +c_0e_0+c_1e_1+\cdots+c_me_m,\\
%\label{va1l}
\var_1-\cL L^*\var_1=\pd_\ta g,\quad\var_1\in\ker(I-\cL L^*)^\perp\cap
  \ker(I+\cL L)^\perp,\\
%\label{pdth}
\pd_\ta\hat\var=\var_1-d_0\phi_0-d_1\phi_1-\cdots-d_m\phi_m, \quad
 \int_{\gaa_i}\hat\var  \, d\sigma=0, \quad   i\geq0.
 \end{gather*}
Moreover,   $c_i$ and $d_i$ are determined as follows:\bppp
 \item If $\cL L=\cL K$, then
 \gan
 c_0=\f{1}{2l^2}\int_{\pd\Om}(g- \cL L\hat\var) \, d\sigma,
\quad  c_1=\cdots=c_m=-c_0, \\
d_0=\int_{\pd\Om}\var_1 e_0\, d\sigma,\quad d_1=\cdots=d_m=0.
\nonumber \end{gather*}
 \item If $\cL L=-\cL K$, then
 \gan
 c_i=\f{1}{2l_i^2}\int_{\gaa_i}(g- \cL L\hat\var) \, d\sigma, \quad   i\geq1,\qquad
 c_0=-\f{1}{2l^2}\int_{\pd\Om\setminus\gaa_0}(g- \cL L\hat\var) \, d\sigma,\\
d_i=\int_{\pd\Om}\var_1 e_i\, d\sigma,\quad i\geq1, \qquad d_0=0.
 \end{gather*}
 \eppp
\ele
\begin{proof} (i).
Assume that $\var+\cL K\var=g\in\cL C^{1+\beta}$. Recall that $e_1,\ldots, e_m$
 span $\ker(I+\cL K)$.
Since $\pd_\ta g\perp\ker(I-\cL K)$, there exists $ \var_1\in\cL C^\beta\cap
\ker(I-\cL K^*)^\perp$
such that $ \var_1-\cL {K^*} \var_1=\pd_\ta g$.
Let $j>0$. Since $\int_{\gaa_j}\cL K^* \var_1\, d\sigma=-\int_{\gaa_j} \var_1\, d\sigma$,
then $\int_{\gaa_j} \var_1\, d\sigma=0$.
Recall that $\phi_0=0$ on $\gaa_j$ and $\int_{\gaa_0}\phi_0\, d\sigma=1$.
Let $d_0=\int_{\pd\Om}
\var_1\, d\sigma$.
 Then $\tilde\var_1=\var_1-d_0\phi_0$ is orthogonal to $e_0,\ldots, e_m$ and hence
there is a unique
$\hat\var\in\cL C^{1+\beta}(\pd\Om)$ such that  $\tilde\var_1=\pd_\ta\hat\var
$ and $\int_{\gaa}\hat\var e_i\, d\sigma=0$ for all $i\geq0$.
Thus, we obtain
\aln
\pd_\ta(\var+K\var)&=
\pd_\ta g=\tilde\var_1-\cL K^*\tilde\var_1=\pd_\tau\hat\var+d_0\phi_0-
\cL K^*(\pd_\tau\hat\var+d_0\phi_0)\\
&=\pd_\ta(\hat\var+K\hat\var).
\end{align*}
Hence, $\var-\hat\var+\cL K(\var-\hat\var)=2c_0+\sum_{i>0} \tilde c_ie_i$. We rewrite it as
$$
(\var-\hat\var-c_0)+\cL K(\var-\hat\var-c_0)=\sum_{i>0} \tilde c_ie_i.
$$
Being in the range of $I+\cL K$,  the right-hand side must be orthogonal to
 $\ker(I+\cL K^*)$. Hence,  $\tilde c_i=0$
and consequently $\var-\hat\var-c_0=\sum_{j>0}  c_je_j$. This shows that
$\var\in\cL C^{1+\beta}$.
Since $\var$
and $\hat \var$ are orthogonal to $e_i$ for $i>0$,
then $c_i+c_0=0$. We  substitute
$\hat \var+c_0e_0+\cdots+c_me_m$ for $\var$ in $\var+\cL K\var=g$ to get
$g=\hat\var+\cL K\hat\var+2c_0e_0$.
Therefore, $2c_0l^2= \int_{\pd\Om}(g- \cL K\hat\var)\, d\sigma$.

(ii).  Assume that $\var-\cL K\var=g\in\cL C^{1+\beta}$. We find
$\var_1\in\cL C^\beta\cap\ker(I+\cL K^*)^\perp$ such that
$\var_1+\cL {K^*}\var_1=\pd_\ta g$. By $\int_{\pd\Om}\cL K^*\var_1\,
 d\sigma=\int_{\pd\Om}\var_1\, d\sigma$,
we get $\int_{\pd\Om}\var_1\, d\sigma=0$.
Since $\phi_j\in\ker(I+\cL {K^*})$ satisfy $\int_{\gaa_i}\phi_j\, d\sigma=
\del_{ij}$ for $i,j>0$, then
for $d_j=\int_{\gaa_j}\var_1\, d\sigma$,
 $\tilde\var_1=\var_1-d_1\phi_1-\cdots-d_m\phi_m$ is orthogonal to
$e_1,\ldots, e_m$. We still have $\tilde\var_1+\cL {K^*}\tilde\var_1=\pd_\ta g$;
 in particular,
$\int_{\gaa_0}\tilde\var_1\, d\sigma=\int_{\pd\Om}\tilde\var_1\, d\sigma=0$.
We   write  $\tilde\var_1=\pd_\ta\hat\var$
with
 $\int_{\gaa_j}\hat\var\, d\sigma=0$ for $j\geq 0$.  As in (ii), we get
 $
\pd_\ta(\var-\hat\var-\cL K(\var-\hat\var))=0$ and hence
$$
 (\var-\hat\var)-\cL K(\var-\hat\var)=\tilde c_0+2\sum_{i=1}^m c_ie_i.
$$
The right-hand side must be orthogonal to $\ker(I-\cL K^*)$, the span
 of $\phi_0$. As $\phi_0$ vanishes on $\gaa_1\cup\cdots\cup\gaa_m$ by \rp{kern} (v), we obtain
  $\tilde c_0=\tilde c_0\int_{\pd\Om}\phi_0 \, d\sigma=0$.
Then $\var-\hat\var-\sum_{j>0} c_je_j\in\ker(I-\cL {K})$, so it  is a constant $c_0$. Therefore,
$\var\in\cL C^{1+\beta}$. Also, $g=\var-\cL K\var=\hat\var-\cL K\hat\var+2(c_1e_1+\cdots+c_me_m)$.
We get $2c_il_i^2=\int_{\gaa_i}(g-\cL K\hat\var)\, d\sigma$ for $i>0$.
Using $0=\jq{\var,e_0}=\sum_{i>0}c_i|l_i|^2+c_0l^2$, we get the formula for $c_0$.
\end{proof}

The above lemma allows us to study integral equations for
the planar Dirichlet problem  via integral equations for
the Neumann problem.
We now  strengthens \rp{bt} (iv) as follows.
\co{ffdi} Let $\pd\Om\in\cL C^{l+\all}$ with $l\geq1$ and $0<\all<1$. Let
 $0\leq\beta\leq\all$ and let $L$ be $K$
or $-K$. If $\var+\cL L\var\in\cL C^{l+\beta}$, then $\var\in\cL C^{l+\beta}$.
\eco

\setcounter{thm}{0}\setcounter{equation}{0}
\section{Regularity of solutions for integral equations with parameter}
\label{sec7}

To motive our methods, we assume for simplicity  that $\pd\Om$ is $\cL C^2$,
and parameterize $\pd\Om$ by $\gaa(t)$ in arc-length.
The kernel $K(s,t)=\f{1}{\pi}\pd_t\arg(\gaa(s)-\gaa(t))$ is then continuous and
the resolvent $L(s,t,z)$ satisfies
\gan %\label{brel1}
%K(s,t)=L(s,t,z)+z\int_0^lK(s,r) L(r,t,z)\,d r,\\
K(s,t)=L(s,t,z)+z\int_0^lL (s,r,z)K(r,t)\,d r.
%\label{brel2}
\end{gather*}
 It is a basic result of Fredholm
that there exists
 $\del(z)$ with $\del(0)=1$ such that $\delta(z)$
 and $\delta(z)L(s,t,z)$ are entire functions  in $z$ (see, e.g., \ci{Kezeeib}).
It is known that  $L(s,t,z)$ is analytic
at $z=1$ when $\Om$ is simply connected (see~\cite{Kezeeib}, or~\ci{Fonifi}, p.~133);
by a theorem of Plemelj~\ci{Plzefo}, it has a simple pole at $z=1$ otherwise.
However, we do not know
 if the zeros of $\del$ accumulate at $1$ as $\del$ varies with $\Om$. One can verify that
 $\del(1)\neq0$ when $\Om$ is simply connected (\cite{Ketwni}, p.~294) and in this case
 the zero of $\del$ does not accumulate at $1$ as domains vary.
  Without resolving this issue,
we will  estimate solutions by taking   limit   and differentiating in $\la$
on the integral equations directly.

This section consists of three results.   \rl{kl} shows the uniform boundedness of solutions
of integral equations in $L^p$ and H\"older norms;   \rl{kl2} provides
 formulae  to differentiate
the integral equations;   \rp{kl3} contains the estimates   for the solutions of the integral
equations.

Recall that for a family of functions $f^\la$ on $\pd\Om\in\cL C^{k+\all}\cap\cL C^1$,
  we define for $k\geq j$
\gan
\|f\|_{k+ \all,j}=\max_{i\leq j,\la}| \pd_{\la}^i    f^\la  |_{k-i+\all},\quad
 \|f^{\mu}-f^\la\|_{k+ \all,j}=\max_{i\leq j }| \pd_{\mu}^i    f^{\mu}
 - \pd_{\la}^i  f^\la|_{k-i+\all}.
 \end{gather*}
For a family of embeddings $z\to\gaa^\la(z)$ from $\pd\Om$
onto $\pd\Om^\la$, we use notation $z^\la=\gaa^\la(z)$ and  $g(z,\la)=g^\la(\gaa^\la(z))$.
Let $\{\cL L^\la\}$ be one of $\{\cL K^{\la}\}, \{-\cL K^{\la}\},
 \{\cL K^{\la*}\}, \{-\cL K^{\la*}\}$, and let
 $\{\ell_1^\la,\ldots, \ell_{n}^\la\}$ be the canonical
  basis of $\ker(I+\cL L^\la)$, described after the proof of \rp{kern}. Define
  $$
  (<\!\var,\ell_i\!>)^\la=\,\jq{\var^\la,\ell_i^\la}\,=
  \int_{\pd\Om^\la}\var^\la\ell_i^\la\, d\sigma^\la.
  $$
\le{kl} Let $\gaa^\la$ embed  $\pd\Om$
onto $\pd\Om^\lambda$  with  $\gaa\in{\cL B}^{1+\all,0}(\pd\Om)$. Let $0<\all<1$ and
$0\leq\beta\leq\all$.
Let $\var^\la\in L^1(\pd\Om^\la)$ and define $\psi_i^\la$ according to the following two cases.
 \bpp
\item
Let  $\{\cL L^\la\}$ be $\{\cL K^\la\}$ or $\{-\cL K^\la\}$. And
$$
\varphi^\la+\cL L^{\la*}\varphi^{\la}=\psi_0^\la,
\quad \jq{\varphi^\la,\ell_i^\la}\,=\psi_i(\la),
\quad 1\leq i\leq n.
$$
\item
 Let $\{\cL L^\la\}$ be one of $\{\cL K^\la\},\{-\cL K^\la\}, \{\cL K^{\la*}\}$,
  and $\{-\cL K^{\la*}\}$. And
$$
\var^\la+\cL L^{\la}\var^{\la}=\psi_0^\la,
\quad \jq{\var^\la,\ell_i^\la}\,=\psi_i(\la),
\quad 1\leq i\leq n.
$$
\epp
Then the followings hold.
\bppp
\item
Let   $1/\all <p\leq\infty$. Suppose that
$\la \mapsto\psi_0^\la\circ\gaa^\la\in L^p(\pd\Om)$
and $\la\mapsto \psi_i(\la)\in\rr$ are bounded $($resp.\,continuous$)$ maps. Then
$\la\mapsto \var^\la\circ\gaa^\la\in {L^p(\pd\Om)}$ is bounded $($resp.\,continuous$)$.
\item If $\psi_0^\mu,\psi_0^\la$ are in $L^p(\pd\Om^\la)$ with $1<p\leq\infty$ then
\al
\label{ptata+2Lp}
&|\varphi(\cdot,\mu)-\varphi(\cdot,\lambda)|_{L^p}
 \leq C_{1+\all,0}C_p(|\varphi(\cdot,\mu)-\varphi(\cdot,\lambda)|_{L^1}+
 |\psi_0 (\cdot,\mu)-\psi_0 (\cdot,\lambda)|_{L^p}\\
 &\qquad+
(|(\psi_0(\cdot,\mu),\psi_0(\cdot,\lambda))|_{L^p}
 +
|(\varphi(\cdot,\mu),\varphi(\cdot,\lambda))|_{L^1})
|\gaa^{\mu}-\gaa^{\lambda}|_{1+\all}).
\nonumber
\end{align}
\item
If $\psi_0^\mu,\psi_0^\la$ are in $\cL C^\beta(\pd\Om^\la)$, then
\al
\label{ptata+2}
|\varphi(\cdot,\mu)-\varphi(\cdot,\lambda)|_{\beta}
& \leq C_{1+\all,0}(|\varphi(\cdot,\mu)-\varphi(\cdot,\lambda)|_{L^1}+
 |\psi_0 (\cdot,\mu)-\psi_0 (\cdot,\lambda)|_{\beta}\\
 &\quad+
(|\psi_0^\la|_{\beta}
 +
|\varphi^\la|_{L^1})
|\gaa^{\mu}-\gaa^{\lambda}|_{1+\all}).
\nonumber\end{align}
\eppp
\ele
\begin{proof} We first verify the assertions for case a).  The verification for b) will be
   simpler, after we establish $\phi_i\in\cL B^{\all,0}$ via (i) of case a).
The proof of (i) is given in steps 1 and 2. The proofs of (ii) and (iii) are in step 3.

   {\bf Step 1. Boundedness in $L^p$ norms}. Fix $1/\all <p\leq\infty$.
We are given
\ga\label{tphi-}
    \varphi(\zeta,{\lambda})+\int_{\eta\in\pd\Om}     \varphi(\eta,{\lambda})
  L^{\lambda}(\eta^{\lambda},\zeta^{\lambda})\,
   d\sigma^{\lambda}(\eta^{\lambda})=  \psi_0(\zeta,{\lambda}),\\
\nonumber % \label{tphi-+}
 \int_{\eta\in\pd\Om}
  \varphi(\eta,{\lambda})\ell_i^{\la}\, d\sigma^{\la}(\eta^{\la})= \psi_i({\lambda}),\quad
  1\leq i\leq n.
\end{gather}
Assume for the sake of contradiction that $A_j=| \varphi^{\lambda_j}|_{L^p}\to\infty$
for some $\lambda_j\to0$.
Normalize in $L^p$ norm by letting
$\tilde \varphi^{\la_j}=A_j^{-1}\varphi^{\la_j}$ and $\tilde \psi_i^{\lambda_j}
=A_j^{-1}\psi_i^{\lambda_j}$.
   We get
\ga\label{tphi}
  \tilde \varphi(\zeta,{\lambda_j})+\int_{\pd\Om}    \tilde \varphi(\eta,{\lambda_j})
  L^{\lambda_j}(\eta^{\lambda_j},\zeta^{\lambda_j})\,
   d\sigma^{\lambda_j}(\eta^{\lambda_j})=\tilde \psi_0(\zeta,{\lambda_j}),\\
\label{tphi+}|\tilde \varphi(\cdot,{\lambda_j})|_{L^p}=1, \quad \int_{\pd\Om}
\tilde \varphi(\eta,{\lambda_j})\ell_i^{\la_j}\, d\sigma^{\la_j}(\eta^{\la_j})=
\tilde \psi_i({\lambda_j}).
\end{gather}
Since the $L^p$ norms of  $\tilde \varphi^{\lambda_j}$  are   bounded, by \rp{bt} (i)
the  $\cL C^{\all/2}$-norms of $\cL L^{\la_j*}\varphi^{\la_j} $ on $\pd\Om^{\gaa_j}$
are   bounded too. Thus,
$(\cL L^{\la_j*}\varphi^{\la_j})\circ\gaa^{\la_j}$ have bounded $\cL C^{\all/2}$-norms
 on $\pd\Om$.
Passing to a subsequence if necessary, $(\cL L^{\la_j*}\varphi^{\la_j})\circ\gaa^{\la_j}$
converges uniformly on $\pd\Om$. Since $\tilde\psi_0(\cdot,\la_j)$ converges to $0$
in $L^p$ norm,  \re{tphi} implies
that
$\tilde\varphi(\cdot,\la_j)$ converges  to  some $\varphi_*=\varphi^*\circ\gaa^0$
in $L^p$ norm.
Recall that $d\sigma^\la(z^\la)=a^\la(z)\, d\sigma(z)$ with $a^\la(z)=|\pd_{\ta}\gaa^\la(z)|$.
 Since $a^{\la_j}$
converges to $a^0$ in sup norm, then $\tilde\var(\cdot,\la_j)a^{\la_j}(\cdot)$ approaches
to $\var_*a^0(\cdot)$ in $L^p$ norm.  Decompose
\aln
&\Bigl|\int_{\pd\Om}   \tilde\varphi  (\eta,\la_j) L^{\la_j}(\eta^{\la_j},\zeta^{\la_j})\,
   d\sigma^{\la_j}(\eta^{\la_j})-\int_{\pd\Om}   \varphi_*  (\eta) L^{0}(\eta^{0},\zeta^{0})\,
   d\sigma^{0}(\eta^{0})\Bigr|\\
   &\quad\leq
 \Bigl|\int_{\pd\Om}   (\tilde\varphi  (\eta,\la_j)a^{\la_j}(\eta)
 -\varphi_*  (\eta)a^0(\eta) ) L^{\la_j}(\eta^{\la_j},\zeta^{\la_j})\,
   d\sigma(\eta)\Bigr|\\
   &\qquad +\Bigl|\int_{\pd\Om}   \varphi_*  (\eta)a^0(\eta)( L^{\la_j}(
   \eta^{\la_j},\zeta^{\la_j})
   - L^{0}(\eta^{0},\zeta^{0}))\,
   d\sigma(\eta)\Bigr| =I_j'(z)+I_j''(z).
\end{align*}
From $p>1/\all$, $|K^\la(\zeta^\la,z^\la)|\leq C|\zeta-z|^{\all-1}$, and
H\"older inequalities, we see
that $I_j\to0$ in $L^p$ as $\la_j\to0$. From H\"older inequality and the
dominated convergence theorem, we
see that $I_j''\to0$ in $L^p$ also for $\la_j\to0$.  Thus,
  letting $j$ tend to $\infty$ in \re{tphi}-\re{tphi+}, we get
 \gan
  \varphi^*(\zeta^0)+\int_{\pd\Om}   \varphi^*  (\eta^0) L^{0}(\eta^{0},\zeta^{0})\,
   d\sigma^{0}(\eta^{0})=0,\\
|\varphi^*|_{L^p}=1, \quad \int_{\pd\Om}\varphi^*(\eta^0)\ell_i^{0}\,
d\sigma^{0}(\eta^0)=0,\quad
i=1,\ldots, n.
\end{gather*}
By \rp{kern} (iii), the first and   last $n$
 identities imply that $\varphi^*=0$. The latter contradicts to the second
identity. Therefore $\{|\varphi^\la|_{L^p}\}$ is bounded.
By \rp{bt} (i) and (ii), we obtain
\ga\label{houb}
|\cL L^{\la*}\varphi^\la|_{\all/2}\leq C_{1+\all}|\var^\la|_0,\\
|\varphi^\la|_{\beta}\leq C_{1+\all}(|\varphi^\la|_{L^1}+|\psi_0^\la|_\beta).
\label{houb+}\end{gather}

{\bf Step 2. Continuity in $L^p$ norms}. Fix $1/\all <p\leq\infty$.
Assume for the sake of contradiction that
   $|\varphi(\cdot,\la_j)-\varphi(\cdot,0)|_{L^p}\geq\del>0$ for a sequence
$\la_j$ tending to zero. By \re{houb},
passing to
a subsequence if necessary, we may assume that the sequence of continuous functions
$(\cL L^{\la_j}\varphi^{\la_j*})\circ\gaa^{\la_j}$ converges uniformly as $\la_j\to0$.
Hence by \re{tphi-},
$\varphi(\cdot,\la_j)$
 converges to   $\varphi_*=\varphi^*\circ\gaa^0$ in $L^p$. We have
$|\varphi(\cdot,{0})-\varphi^{*}(\cdot)|_{L^p}\geq\del$. By the same arguments
in step 1 we know that $\varphi^*,\varphi^0$ satisfy the same equations
\gan
\varphi^0(\zeta^0)+ \int_{\pd\Om}   \varphi^0  (\eta^0) L^{0}(\eta^{0},\zeta^{0})\,
   d\sigma^{0}(\eta^{0})=\psi_0^0(\zeta^0),\\
   \int_{\pd\Om}\varphi^0(\eta^0)\ell_i^0\, d\sigma^0(\eta^0)=\psi_i^0,
   \quad 0\leq i\leq n.
\end{gather*}
By \rp{kern} (iii),  $\varphi^0=\varphi^{*}$, a contradiction.
This proves  the continuity of $\varphi(z,\la)$ in $L^p$ norm.

We  proceed to repeat steps $1$ and $2$ for case b).  From case a), we know
that $\phi_0,\ldots,\phi_m$ are of class $\cL B^{\alpha,0}$.
Thus the basis $\{\ell_1,\ldots,\ell_n\}$ of $\ker(I+\cL L)$ is of class
$\cL B^{\all,0}$ in all cases.

We are given
\ga\label{vaeq2}
    \varphi^\la+
  \cL L^{\lambda}\var^\la=  \psi_0^\la,\quad
   \int_{\eta\in\pd\Om}
  \varphi^\la\ell_i^{\la}\, d\sigma^{\la}(\eta^{\la})= \psi_i^\la,\quad
  1\leq i\leq n.
\end{gather}
We first repeat step 1, which is simpler now.
Assume for contradiction that there exists a sequence $\la_j$, approaching to $0$,
 such that $|\var^{\la_j}_j|_{L^p}=B_j$ tends to $\infty$. Then
 $\tilde\var_j=B_j^{-1}\var_j^{\la_j}$ has bounded $L^p$ norms, and $\cL L^{\la_j}\tilde\var_j$
 has bounded $\cL C^{\all/2}$ norms.   Passing to a subsequence if necessary,
  we may assume that $\cL L^{\la_j}\tilde\var_j$ converges uniformly on $\pd\Om$. Hence
    $\tilde\var_j=B_j^{-1}\psi_j-\cL L^{\la_j}
  \tilde\var_j$ converges to $\var_*=\var^*\circ\gaa^0$ in $L^p(\pd\Om)$. Reasoning
  as in step 1 shows that $\var^*$ satisfies
$$
\var^*+    \cL L^{0}\var^*  =0,\quad \var^*\perp\ker(I+\cL L^0),\quad |\var^*|_{L^p}=1.
   $$
The first two expressions imply that $\var^*=0$, a contradiction. This shows that $\var^\la$
  have bounded $L^p$ norms.
   Thus the $\cL C^{\all/2}$ norms of $\cL L^\la\var^\la$ on $\pd\Om^\la$ are bounded, and
   every sequence $\var^{\la_j}\circ\gaa^{\la_j}(z)$ with $\la_j\to0$ has a
   subsequence converging uniformly
to some $\tilde\var(z,0)=\tilde\var^0\circ\gaa^0(z)$ on $\pd\Om$. It is clear that
  $\tilde\var^0, \var^0$
 satisfy the same equations \re{vaeq2} with $\la=0$. Therefore,
$\tilde\var^0=\var^0$ and consequently $\var^\la\circ\gaa^\la$ are continuous in $L^p$ norm.

{\bf Step 3. Estimates  in $L^p$ and  H\"older norms}. This step
works for a), b).
We first consider case a) and  derive \re{ptata+2} for $\beta>0$. We have $L=K$ or $-K$.
It suffices to verify it for $\beta=\all$.
 For $ z\in\pd\Om$, write $$d\sigma^\la(z^\la)= a^\la(z)\, d\sigma(z), \qquad
D(z)= \f{a^\mu(z)}{a^\la(z)}\varphi (z,{\mu})-  \varphi (z,{\la}).
$$
We set $\la=
\mu$ in \re{tphi-} and then multiply it by $a^\mu(z)/{a^\la(z)}$. We   subtract
the new equation by the original \re{tphi-} and  get
\ga \label{dint}
D(z)+\int_{\pd\Om}D(\zeta)  L^\la(\zeta^\lambda,z^\lambda)\, d\sigma^\la(\zeta^\la)=E_0(z)-
E_1(z)-E_2(z).
 \end{gather}
 with
 \ga
\label{e0za} E_0(z)= \f{a^\mu(z)}{a^\la(z)}\psi_0 (z,{\mu})-  \psi_0 (z,{\la}),\\
 \label{e1zp}
  E_1(z)=\int_{\zeta\in\pd\Om}   \varphi (\zeta,{\mu})  (L^{\mu}(\zeta^{\mu},z^{\mu})-
    L^{\la}(\zeta^{\la},z^{\la}))\,
    d\sigma^{\mu}(\zeta^{\mu}),\\
   \label{e2za}
    E_2(z)=\Bigl\{\f{a^\mu(z)}{a^\la(z)}-1\Bigr\}\int_{\zeta\in\pd\Om}
     \varphi (\zeta,{\mu})  L^{\mu}(\zeta^{\mu},z^{\mu})\,
    d\sigma^{\mu}(\zeta^{\mu}).
\end{gather}
Note that $| \f{a^\mu(\cdot)}{a^\la(\cdot)}-1|_{\all}\leq C_1|\gaa(\cdot,\mu)
-\gaa(\cdot,\la)|_{1+\all}$.
Immediately, we have
\gan
|E_0|_{\all}\leq C_{1}(|\psi_0^\mu|_{\alpha}|\gaa(\cdot,\mu)-\gaa(\cdot,\la)|_{1+\all}
+|\psi_0(\cdot,\mu)-\psi_0(\cdot,\la)|_{\alpha}).
\end{gather*}
 By \re{ktapi4} with $i=j'=0$, we obtain
\gan
|E_2|_{\alpha}\leq C|\gaa(\cdot,\mu)-\gaa(\cdot,\la)|_{1+\all}|\varphi^\mu|_{\alpha}.
\end{gather*}
Define trivial extension
$\tilde\varphi_\mu^\la(z^\la)=\varphi^\mu(z^\mu)$, so it
 is actually independent of $\la$. In particular, since $\varphi(\cdot,\mu)
\in\cL B^{\alpha}$ then
 $\tilde\varphi_\mu$ is of class $\cL C^{\alpha,0}(\pd\Om_\gaa)$.
Also, define $\tilde a_\mu^\la(z^\la)=a^\mu(z)$, so $\tilde a_\mu\in
\cL C^{k+\all,j}(\pd\Om_\gaa)$. By \rl{k1rg},
for $L=\e K^*$
\al\label{e1z2}
\e E_1(z)&=\var^\mu\Bigl(1-\f{a^\mu(z)}{a^\la(z)}\Bigr)-2\RE\{{\ta^{\mu}}
\cL C^{{\mu}}_+(\ov{\ta^{\mu}} \tilde\varphi_\mu^{\mu})\}
 +2\RE\Bigl\{{\ta^{\la}}
\cL C^\la_+\Bigl(\ov{\ta^{\la}}\f{\tilde a^\la_\mu}{a^\la}\tilde \varphi_\mu^{\la}\Bigr)\Bigr\}.
\end{align}
 By the Cauchy transform with parameter (\rl{c0pa}), we obtain
$$
|E_1|_{\alpha}\leq C|\var^\mu|_{\alpha}|\gaa(\cdot,\mu)-\gaa(\cdot,\la)|_{1+\all}.
%\leq C'(|\var^\mu|_{L^1}+|\psi_0^\mu|_\all)|\gaa(\cdot,\mu)-\gaa(\cdot,\la)|_{1+\all}.
$$
Applying \rp{bt} (ii) to \re{dint}, we obtain
\aln
| D|_{\alpha}&\leq   C(|D|_{L^1}+(|\varphi^\mu|_{\alpha}+
|\psi_0^\mu|_{\alpha})|\gaa(\cdot,\mu)-\gaa(\cdot,\la)|_{1+\all}
+|\psi_0(\cdot,\mu)-\psi_0(\cdot,\la)|_{\alpha})\\
&\leq C(|\varphi(\cdot,\mu)-\varphi(\cdot,\la)|_{L^1}+(|\varphi^\mu|_{L^1}+
|\psi_0^\mu|_{\alpha})|\gaa(\cdot,\mu)-\gaa(\cdot,\la)|_{1+\all}
\\ &\quad\quad+|\psi_0(\cdot,\mu)-\psi_0(\cdot,\la)|_{\alpha}).
\end{align*}
Here the last inequality is obtained by the definition of $D$ and  \re{houb+}.
 The proof of   \re{ptata+2}
is complete when $\beta>0$.

To verify \re{ptata+2Lp} for case a), we start with  \re{e0za} and get
 $|E_0|_{L^p}\leq C(|\psi_0(\cdot,\la)|_{L^p}|\gaa^\mu-\gaa^\la|_{1}+|\psi_0(\cdot,\mu)-\psi_0
(\cdot,\la)|_{L^p})$. By \re{kta3p-},
$$
|L^{\mu}(\zeta^{\mu},z^{\mu})-
    L^{\la}(\zeta^{\la},z^{\la})|\leq C|\gaa^\mu-\gaa^\la|_{1+\all}|\zeta-z|^{\all-1}.
$$
By H\"older inequality and Fubini's theorem  (or Young's inequality), we have
$
|E_1|_{L^p}\leq C|\var^\mu|_{L^p}|\gaa^\mu-\gaa^\la|_{1+\all}.
$
Also, $
|E_2|_{L^p}\leq C(|\var^\mu|_{L^p})|\gaa^\mu-\gaa^\la|_{1+\all}.
$
By \rp{bt} (ii), we have $|\var^\mu|_{L^p}\leq C(|\var^\mu|_{L^1}+|\psi_0^\mu|_{L^p})$. Thus,
$$
|(E_0,E_1,E_2)|_{L^p}
\leq C(|\psi_0(\cdot,\mu)-\psi_0
(\cdot,\la)|_{L^p}+(|\var^\la|_{L^1}+|\psi_0^\la|_{L^p})|\gaa^\mu-\gaa^\la|_{1+\all}).
$$
By \re{dint} and \rp{bt} (ii) again, we get \re{ptata+2Lp}. Note that
\re{ptata+2Lp} for the $L^\infty$ case gives us \re{ptata+2}
for $\beta=0$.

For b), the above arguments are still valid for \re{ptata+2Lp}-\re{ptata+2}
 after minor changes. The formula \re{e1z2} for $E_1$
  needs to be changed when $L=K$ or $-K$ (see~\rl{k1rg}). The use of Cauchy
  transform with parameter
  is, however,  valid, and the same estimate for $E_1$ holds. The proof for
  (ii) and (iii) is complete.
\end{proof}

\begin{rem} The norms of $\psi_1,\ldots, \psi_n$ do not appear in
 \re{ptata+2Lp}-\re{ptata+2}. However, when we use \re{ptata+2Lp}-\re{ptata+2},
we  need   $\var^\la$ to have bounded $L^1$ norms at least.
The boundedness is established via \rl{kl} (i), so    restrictions
on $\psi_i$ for $i>0$ enter.
\end{rem}

We want to use   \re{dint}-\re{e2za}   to compute
the derivatives in parameter.  Define
 \ga\label{tplv14}
 \tilde\pd_\la\var^\la(z^\la)\df  \pd_\la\{\var^\la(z^\la)\}+\var^\la(z^\la)
 \pd_\la\log |\pd_{\ta_z}\gaa^\la|,\\
\nonumber %\label{l1s1}
 {\cL L}_1^{\la*}\var(z )=
\int_{\pd\Om^\la}    \var^\la (\zeta^\la)\pd_\la
\left\{ L^{\la}(\zeta^\la,z^\la)\right\}\,d\sigma^\la(\zeta^\la),\\
\nonumber %\label{l1s1+}
 {\cL L}_2^{\la*}\var(z)=(\pd_\la \log |\pd_{\ta_z}\gaa^\la| ) \int_{\pd\Om^\la}
     \var^\la (\zeta)  L^{\la}(\zeta^\la,z^\la)\,
    d\sigma^{\la}(\zeta^{\la}).
\end{gather}
Let $\ell_1,\ldots,\ell_n$ be the standard basis of $\ker(I+\cL L)$.

\le{kl2}
 Let $\gaa^\la$ embed  $\pd\Om$
onto $\pd\Om^\lambda$   with $\gaa\in{\cL B}^{2+\all,1}(\pd\Om)$.
\bpp
\item Keep assumptions in {\rm a)} of \rla{kl}.
 Assume further that $ \psi_0\in\cL C_*^{0,1}(\pd\Om)$ and $\psi_i\in\cL C^1([0,1])$
  for $i>0$. Then $\var \in\cL C_*^{0,1}(\pd\Om)$
 and
\ga\label{tplv-}
\tilde\pd_\la\var^\la +\cL L^{\la*}\tilde\pd_\la\var^\la =\tilde\pd_\la\psi_0^\la
-( {\cL L}_1^{\la*} + {\cL L}_2^{\la*} )
\var^\la,\\
\int_{\pd\Om^\la}(\tilde\pd_\la\var^\la)\ell_i^\la\, d\sigma^\la=
\pd_\la\psi_i^\la.
\label{tplv}
\end{gather}
\item  Keep assumptions in {\rm b)} of \rla{kl}.
 Assume further that $ \psi_0\in\cL C_*^{0,1}(\pd\Om)$ and $\psi_i\in\cL C^1([0,1])$.
  Then $\var \in\cL C_*^{0,1}(\pd\Om)$
and
\ga \label{tplv-o}
\tilde\pd_\la\var^\la +\cL L^\la\tilde\pd_\la\var^\la =\tilde\pd_\la\psi^\la-
( {\cL L}_1^\la + {\cL L}_2^\la )
\var^\la,\\
\int_{\pd\Om^\la}(\tilde\pd_\la\var^\la)\ell_i^\la\, d\sigma^\la=
\pd_\la\psi_i^\la-\int_{\pd\Om^\la}\var^\la
(\pd_\la\ell_i^\la)\, d\sigma^\la,\quad 1\leq i\leq n.
\label{tplvo}
\end{gather}
\item Let $0\leq\beta\leq\all$. In {\rm a)} and {\rm b)} of \rla{kl},
 we have $\var\in\cL B^{\beta,j}_*(\pd\Om_\gaa)$
  provided $\psi_0\in \cL B^{\beta,j}_*(\pd\Om_\gaa)$,
   $\psi_i\in\cL C^j([0,1])$, and $\gaa\in \cL B^{j+1+\beta,j}(\pd\Om)$;
the same assertion holds if $\cL C$ substitutes for $\cL B$.
\epp
\ele
\begin{proof} a). Let us recall some identities in the previous proof.
Fix $\la$. Recall that $\ell_i$
are locally constant when $L=K$ or $-K$. We also use notation $f^\la(z^\la)=f(z,\la)$.
By \re{dint}-\re{e2za},   the difference quotient
$$\hat D(z, \mu)=\f{1}{\mu-\la}\Bigl(
\f{a^\mu(z)}{a^\la(z)}\varphi(z,\mu)-\varphi(z,\la)\Bigr)$$
 satisfies
\ga \label{dint+}\nonumber
\hat D(z, \mu)+\int_{\pd\Om}\hat
D(\zeta, \mu)  L^\la(\zeta^\lambda,z^\lambda)\, d\sigma^\la(\zeta^\la)
=\hat E_0(z, \mu)-\hat E_1(z, \mu)
-\hat E_2(z, \mu),\\
\label{dmea+}\nonumber
 \int_{\zeta\in\pd\Om}\hat D(\zeta, \mu)\ell_i^\la\, d\sigma^\la(\zeta^\la)
 =\f{1}{\mu-\la}(\psi_i^\mu-\psi_i^\la),
  \quad 1\leq i\leq n
 \end{gather}
 with
 \ga\label{e1zp+}\nonumber
\hat E_0(z,\mu)= \f{1}{\mu-\la}\Bigl(\f{a^\mu(z)}{a^\la(z)}\psi_0 (z,{\mu})-
  \psi_0 (z,{\la})\Bigr),\\
  \hat E_1(z, \mu)=\int_{\zeta\in\pd\Om}   \varphi (\zeta,{\mu})
   \f{L^{\mu}(\zeta^{\mu},z^{\mu})-
 \nonumber   L^{\la}(\zeta^{\la},z^{\la})}{\mu-\la}\,
    d\sigma^{\mu}(\zeta^{\mu}),\\
   \label{e2za+}\nonumber
   \hat  E_2(z, \mu)= \f{a^\mu(z)-a^\la(z)}{a^\la(z)(\mu-\la)}
   \int_{\zeta\in\pd\Om}       \varphi (\zeta,{\mu})  L^{\mu}(\zeta^{\mu},z^{\mu})\,
    d\sigma^{\mu}(\zeta^{\mu}).
\end{gather}
As $\mu\to\la$, it is clear that
 $\hat \psi_0(z, \mu)$ converges uniformly to $\tilde\pd_\la\psi_0^\la$.
 We want to show that as $\mu$ tends to $\la$,
  $\hat D(z, \mu)$ converges uniformly to a continuous function. Then the
  existence of the limit function  implies  that $\pd_\la\var^\la$ exists
  and  the limit function must be $\tilde\pd_\la\var^\la$.

By \rl{kl}, $\var\in\cL C^{0,0}(\pd\Om_\gaa)$. It is easy to see that
 $\hat E_1,\hat E_2$ are continuous at $\mu\neq\la$.
Also $\hat E_2(z,\mu )$ converges uniformly to $(\cL L_2^{\la*}\phi)(z^\la)$
 as $\mu\to\la$; in particular,
 $\{\hat E_2^\mu\}$   extends
to an element in $\cL C^{0,0}$.
For   $\zeta\neq z$,
by the mean-value theorem and \re{kta3p--}  we obtain
$$ %\eq{kmkl}
\Bigl|\f{K^{\mu}(\zeta^{\mu},z^{\mu})-
    K^{\la}(\zeta^{\la},z^{\la})}{\mu-\la}\Bigr|\leq C|\zeta-z|^{\all-1}.
$$ %\eeq
Thus $\hat E_1(z,\mu )$ converges uniformly to $\cL L_1^{\la*}\var(z^\la)$
as $\mu\to\la$, and  $\{\hat E_1^\mu\}$
   extends to an element
in $\cL C^{0,0}$. By \rl{kl} (ii) with $\beta=0$, we conclude that as $\mu\to\la$,
 $\hat D(\cdot,\mu)$ has
a limit $\tilde\pd_\la\var^\la$ satisfying \re{tplv-}-\re{tplv}.

b).  By a),   $\phi_0,\ldots,\phi_m$ are of class
 $\cL C^{1,1}$ when $\gaa\in\cL B^{2+\all,1}$. Thus, in all cases,
 we have $\ell_i\in \cL C^{1,1}$.
Fix $\la$. We need   some minor changes in the above arguments.   The difference quotient
$\hat D(z, \mu)$
 satisfies
\gan
\hat D(z, \mu)+\int_{\pd\Om}\hat
D(\zeta, \mu)  L^\la(z^\lambda,\zeta^\lambda)\, d\sigma^\la(\zeta^\la)=
\hat E_0(z, \mu)+\hat E_1^*(z, \mu)
+\hat E_2^*(z, \mu),\\
 \int_{\zeta\in\pd\Om}\hat D(\zeta, \mu)\ell_i^\la\, d\sigma^\la(\zeta^\la)
 =\f{\psi_i^\mu-\psi_i^\la }{\mu-\la}
 -\int_{\zeta\in\pd\Om}   \varphi (\zeta,{\mu})  \f{\ell_i^{\mu}(\zeta^{\mu})-
    \ell_i^{\la}(\zeta^{\la})}{\mu-\la}\,
    d\sigma^{\mu}(\zeta^{\mu})
 \end{gather*}
 with
 \gan
  \hat E_1^*(z, \mu)=\int_{\zeta\in\pd\Om}   \varphi (\zeta,{\mu})
  \f{L^{\mu}(z^{\mu},\zeta^{\mu})-
    L^{\la}(z^{\la},\zeta^{\la})}{\mu-\la}\,
    d\sigma^{\mu}(\zeta^{\mu}),\\
    \hat  E_2^*(z, \mu)= \f{a^\mu(z)-a^\la(z)}{a^\la(z)(\mu-\la)}
   \int_{\zeta\in\pd\Om}       \varphi (\zeta,{\mu})  L^{\mu}(z^{\mu},\zeta^{\mu})\,
    d\sigma^{\mu}(\zeta^{\mu}).
\end{gather*}
By \rl{kl}, $\var$ is continuous. It is easy to see that $\hat E_1^*,\hat E_2^*$
are continuous at $\mu\neq\la$.
Also $\hat E_2^*(z,\mu )$ converges uniformly to $\cL L_2^\la\var(z^\la)$ as $\mu\to\la$,
and $\{E_2^{*\mu}\}$   extends
to an element in $\cL C^{0,0}$.
Also $\hat E_1^*(z,\mu )$ converges to $\cL L_1^{\la}\var(z^\la)$ as $\mu\to\la$,
and $\{E_1^{*\mu}\}$
   extends  to an element
in $\cL C^{0,0}$. By \rl{kl} (ii) with $\beta=0$, we conclude that as $\mu\to\la$,
 $\hat D(z,\mu)$ converges uniformly to
a limit function, which is denoted by $(\tilde\pd_\la\var^\la)(z^\la)$ and satisfies
\re{tplv-o}-\re{tplvo}.

c).  When $j=0$ we get $\var\in\cL B^{\beta,0}$ from
 \rl{kl} (i) and (ii) and  \rp{bt} (ii) and (iii) and we further have $\var\in\cL C^{\beta,0}$
for $\psi_0\in\cL C^{\beta,0}$ and $\gaa\in\cL C^{1+\all,0}$. Assume that the
assertions hold when
  $j$ is replaced by $j-1$. Thus $\var\in\cL B_*^{\beta,j-1}$.
We first consider case a).  Then,  we
have \re{tplv-}-\re{tplv}. By \re{ktapi4} with $j'=j$, we know that
 $\cL L_1^{*}\var$ and $\cL L_2^{*}\var$ are in $\cL B_*^{\beta,j-1}$.
 Also $\{\tilde\pd_\la \psi_0^\la\}$
 is in $\cL B_*^{\beta,j-1}$ and $\{\pd_\la\psi_i^\la\}$ are in $\cL C^{j-1}([(0,1])$ for $i>0$.
 By the induction hypothesis,
we conclude that $\{\tilde\pd_\la\var^\la\}\in\cL  B_*^{\beta, j-1}$.
Hence $\{\pd_\la\var^\la\}\in\cL  B_*^{\beta, j-1}$.
Combining with $\var\in \cL B^{\beta, 0}$, we get $\var \in\cL  B_*^{\beta, j}$.
 We can also verify that $\var\in\cL C^{\beta,j}_*$
by \rl{kl} (iii) and \re{ktapi6}, when $\psi\in\cL C^{\beta,j}_*$ and
$\gaa\in\cL C^{j+1+\all,j}$.

For case b), we first apply results from a) and conclude that  $\phi_0,\ldots, \phi_m$
are in $\cL B_*^{\all,j}$. This shows that
$\{\int_{\pd\Om^\la}\var^\la\phi_i^\la\, d\sigma^\la\}$ are in $\cL C^{j-1}$
if $\var\in\cL C^{j-1}$. We substitute \re{tplv-o}-\re{tplvo} for \re{tplv-}-\re{tplv}
and substitute \re{ktapi1}-\re{ktapi2} for \re{ktapi4}-\re{ktapi6}   with $j'=j$.
With minor changes in the arguments, we verify the assertions for b) too. \end{proof}

By \rp{bt} (iv) and \nrc{ffdi}, we have proved all required regularity in higher order
derivatives
of solutions to the integral equations for a fixed parameter.
We are ready to study the regularity of higher order derivatives for the parameter case.
\pr{kl3} Let $\gaa^\la$ embed  $\pd\Om$
onto $\pd\Om^\lambda$. Let $k\geq j\geq0$ and $0\leq\beta\leq\all<1$. Let $\beta>0$ when $k>0$.
Suppose that $\gaa\in  \cL B^{k+1+\all,j}(\pd\Om)$,
% {\rm (}resp. $\cL C^{k+1+\all,j}(\pd\Om)${\rm)},
$\psi_0\in\cL B^{k+\beta,j}(\pd\Om_\gaa)$, % {\rm (}resp. $\cL C^{k+\beta,j}(\pd\Om_\gaa)${\rm)},
 $\psi_i \in\cL C^{j}([0,1])$ for $1\leq i\leq n$, and $\var^\la\in L^1(\pd\Om^\la)$.
\bpp
\item Let $\{\cL L^\la\}$ be   $ \{\cL K^{\la}\}$ or $ \{-\cL K^{\la}\}$.
Suppose that
$$ %\eq{vlll}
\var^\la+\cL L^{\la*}\var^{\la}=\psi_0^\la,
\qquad\jq{\var^\la, \ell_j^\la}\,=\psi_i^\la, \quad 1\leq i\leq n.
$$ %\eeq
Then $\var\in\cL B^{k+\beta,j}(\pd\Om_\gaa)$.
% {\rm (}resp. $\cL C^{k+\all,j}(\pd\Om_\gaa)${\rm)}.
 \item Let $\{\cL L^\la\}$ be   $ \{\cL K^{\la*}\}$ or $ \{-\cL K^{\la*}\}$.
Suppose that
\eq{vlll2}
\var^\la+\cL L^{\la}\var^{\la}=\psi_0^\la,
\qquad\jq{\var^\la, \ell_j^\la}\,=\psi_i^\la, \quad 1\leq i\leq n.
\eeq
Then $\var\in\cL B^{k+\beta,j}(\pd\Om_\gaa)$.
% {\rm (}resp. $\cL C^{k+\all,j}(\pd\Om_\gaa)${\rm)}.
%\item[\!\!\!\!\!\!b{\rm )'}]
\item
Let $l\leq k+1$ and $\beta>0$ when $l>0$. Let $\{\cL L^\la\}$ be
 $\{\cL K^\la\}$ or $\{-\cL K^\la\}$. Suppose that
$\psi_0\in\cL B^{l+\beta,j}(\pd\Om_\gaa)$.
%  {\rm (}resp. $\cL C^{l+\beta,j}(\pd\Om_\gaa)${\rm)}.
Then the solution  $\var$ to   \rea{vlll2} is in
$\cL B^{l+\beta,j}(\pd\Om_\gaa)$.  %    {\rm (}resp. $\cL C^{l+\all,j}(\pd\Om_\gaa)${\rm)}.
\epp
$a),b)$ and $c)$ remain true if the symbol
  $\cL C^\bullet$ substitutes for $\cL B^{\bullet}$ in all conditions and assertions.
\epr
\begin{proof}
The   proposition is proved when $k=0$ and $\beta=0$, by \rp{bt} (ii) and \rl{kl}
(i) and (ii). We may assume that $\beta=\all$.

a).
We first verify the assertions when $j=0$. When $k=0$ we get $\var\in\cL B^{\beta,0}$
by \rp{bt} (ii)
and $\var\in\cL C^{\beta,0}$ by
\rl{kl} (i) and (iii).  We apply \rp{ktapi}.
Then \re{ktapi4} implies    $\var\in\cL B^{k+\beta,0}(\pd\Om_\gaa)$
for $\gaa\in\cL B^{k+1+\all,0}$;
\re{ktapi6}  implies    $\var\in\cL C^{k+\beta,0}(\pd\Om_\gaa)$
for $\gaa\in\cL C^{k+1+\all,0}$ and $\psi_0\in\cL C^{k+\all,0}$.

For $j>0$,  assume that a) is valid when $j$ is replaced by $j-1$.
Thus, $\var\in\cL B^{k+\beta,j-1}(\pd\Om)$. And
$\var\in\cL C^{k+\beta,j-1}(\pd\Om_\gaa)$
for $\gaa\in\cL C^{k+1+\all,j-1}$ and $\psi_0\in\cL C^{k+\all,j-1}$. Since
 $\psi_0\in\cL C^{1,1}$ and $\psi_i\in\cL C^1([0,1])$ for $i>0$, \rl{kl2}
 implies that
\gan
\tilde\pd_\la \var^\la +\cL L^{\la*}\tilde\pd_\la\varphi^\la =\tilde\pd_\la\psi_0^\la
-( {\cL L}_1^{\la*} + {\cL L}_2^{\la*} )
\varphi^\la,\quad \jq{\tilde\pd_\la\var^\la,\ell_i^\la}=\pd_\la\psi_i^\la, \quad i>0.
\end{gather*}
Here $\{\pd_\la\psi_i\}\in\cL C^{j-1}$. Also,  $\tilde\pd_\la\psi_0\in
 \cL B^{k-1+\all,j-1}$ by
 $$ %  \eq{pdtv}
  \tilde\pd_\la\psi_0^\la(z^\la)= \pd_\la\psi_0^\la (z^\la)+ \psi_0^\la(z^\la)
\pd_\la\log |\pd_\ta\gaa^\la|,\ \{\pd_\la\log |\pd_\ta\gaa^\la|\}\in\cL B^{k-1+\all,j-1}.
$$ %\eeq
 Combining $\var\in\cL B^{k+\beta,j-1}(\pd\Om)\subset \cL B^{k-1+\beta,j-1}(\pd\Om)$ with
\gan
 {\cL L}_1^{\la*}\varphi(z )=
\int_{\pd\Om^\la}    \varphi^\la (\zeta)\pd_\la
\left\{ L^{\la}(\zeta^\la,z^\la)\right\}\,d\sigma^\la(\zeta^\la),\\
 {\cL L}_2^{\la*}\varphi(z)=(\pd_\la\log |\pd_\ta\gaa^\la(z)| ) \int_{\pd\Om^\la}
     \varphi^\la (\zeta)  L^{\la}(\zeta^{\la},z^{\la})\,
    d\sigma^{\la}(\zeta^{\la}),
\end{gather*}
we see  from \re{ktapi4} that
  $\cL L_i^*\var\in\cL B^{k-1+\all,j-1}$.
Thus $\{\tilde\pd_\la\var^\la\}$ and   $\{\pd_\la\var^\la\}$ are in $
 \cL B^{k-1+\all,j-1}(\pd\Om_\gaa)$.  Combining with $\{\pd_\la\var^\la\}
\in\cL B^{k+\all,0}(\pd\Om_\gaa)$, we see that  $\{ \var^\la\}
$ is in $\cL B^{k+\all,j}(\pd\Om_\gaa)$.
To verify $\var\in\cL C^{k+\all,j}$ for $\gaa\in\cL C^{k+1+\all,j}$,
we use \re{ktapi6} instead of \re{ktapi4}.

b). Note that the case where $k=l=0$ is
established by \rl{kl}. So we assume that $k\geq1$.
 Although we are dealing
with the same integral equations as in a), i.e.
 $\var^\la\pm \cL K^{\la*}\var^\la=\psi_0^\la$, the functions $\ell_i$
 appeared  in  $\jq{\var^\la,\ell_i^\la}\, =\psi_i^\la$
are no longer constants in general. Nevertheless,  a) implies that
$\phi_0, \phi_1,\ldots, \phi_m$
are of class $\cL B^{k+\all,j}$ or are of class $\cL C^{k+\all,j}$ when
 $\gaa\in\cL C^{k+1+\all,j}$.
In any case, we have $\ell_i\in\cL B^{k+\all,j}$. Then
\gan %\label{tpdp}
\int_{\pd\Om^\la}(\tilde\pd_\la\var^\la)\ell_i^\la\, d\sigma^\la=
\pd_\la\psi_i^\la-\int_{\pd\Om^\la}\var^\la
(\pd_\la\ell_i^\la)\, d\sigma^\la
\end{gather*}
are in $\cL C^{j-1}([0,1])$, if we know $\psi_i\in\cL C^j$ and $\var\in\cL C_*^{0,j-1}$.
 The latter is ensured
if $\var\in\cL B^{k-1+\all,j-1}$ with $k\geq j$ and $j\geq1$.
 Then, $\cL L_i(\cL B^{k-1+\all,j-1})$ is contained in $ \cL B^{k-1+\all,j-1}$ by
 \re{ktapi4} and
 \gan %\label{cll1l2}
  {\cL L}_1^{\la}\var(z )=
\int_{\pd\Om^\la}    \var^\la (\zeta^\la)\pd_\la
\left\{ L^{\la}(z^\la,\zeta^\la)\right\}\,d\sigma^\la(\zeta^\la),\\
%\label{cll1l2+}
 {\cL L}_2^{\la}\var(z)=(\pd_\la\log |\pd_\ta\gaa^\la(z)| ) \int_{\pd\Om^\la}
      \var^\la (\zeta)  L^{\la}(z^\la,\zeta^\la)\,
    d\sigma^{\la}(\zeta^{\la}).
\end{gather*}
 Finally,
$\{\pd_\la\log |\pd_\ta\gaa^\la|\}$ is   in $\cL B^{k-1+\all,j-1}(\pd\Om)$,
which implies that
if $\{\tilde\pd_\la\var^\la\}\in
\cL B^{l-1+\all,j-1}(\pd\Om)$  then $\{\pd_\la\var^\la\}$ remains in
$\cL B^{l-1+\all,j-1}(\pd\Om)$.
With these observations,
the induction proof  in a) is valid without essential changes.
To verify $\var\in\cL C^{k+\all,j}$  when $\gaa\in\cL C^{k+1+\all,j}$ and $\psi_0\in\cL C^{k+\all,j}$,
we use \re{ktapi6} instead of \re{ktapi4}.

\medskip

c).
 To show
 $\var\in\cL B^{k+1+\all,j}$, we cannot use the
  induction proof in b) when $\psi_i\in\cL B^{k+1+\all,j}$. For
that $\{\tilde\pd_\la\var^\la\}\in
\cL B^{k+\all,j-1}(\pd\Om)$, defined by \re{tplv14}, does not imply that
$\{\pd_\la\var^\la\}$ is in $\cL B^{k+\all,j-1}(\pd\Om)$.

Instead, we   apply induction on $l$. If $l=0$,
 by \rl{kl2} c) we get $\var \in B_*^{\all, j}$ and  $\var\in\cL C^{\all,j}_*$
when $\psi_0\in\cL C^{\all,j}_*$ and $\gaa\in\cL C^{j+1+\all,j}$. In particular c)
is valid when $l=0$.
Assume that   c) is valid when $l$ with $>0$ is replaced by $l-1$.
We have
$$
\pd_{\ta^\la}\var^\la-\cL L^{\la*}\pd_{\ta^\la}\var^\la=\pd_{\ta^\la}\psi_0^\la, \quad
\int_{\gaa_i}\pd_{\ta^\la}\var^\la\, d\sigma^\la=0, \quad i\geq0.
$$
Note that $\{\pd_{\ta^\la}\psi_0^\la\}$ is in $\cL B^{l-1+\all,j}$ when $l-1\geq j$
 and it is in $\cL B^{j-1+\all,j-1}$
when $l=j$. By b), we conclude that $\pd_\ta\var\in\cL B^{l-1+\all,j}$ for
$k\geq l-1\geq j$ and it is in $\cL B^{j-1+\all,j-1}$
when $l=j$. Combining with $\var\in\cL B_*^{\all,j}$, we conclude that
$\var\in\cL B^{l+\all,j}$. We can also verify
that $\var\in\cL C^{l+\all,j}$ when $\psi_0\in\cL C^{l+\all,j}$ and $\gaa\in\cL C^{k+1+\all,j}$.

One can give another proof for c)  by using \rl{klem}
and a), which avoids the induction argument. We leave the details to the reader.
  \end{proof}

\setcounter{thm}{0}\setcounter{equation}{0}
\section{H\"older spaces for exterior domains with parameter}
\label{sec8}

In this section, we return to the definition  of H\"older spaces with parameter.
However, the reader can turn to
the proof of   \rt{dnpb} for
interior domains by skipping this section.
\rl{whitc}
shows that   elements in $\cL B^{k+\beta,j}(\pd\Om_\gaa)$ extend to  elements
 in $\cL B^{k+\beta,j}(\ov\Om_\Gaa)$. \rl{spac} shows that possibly by restricting
 $\la$ to a subinterval,  we can extend a family of embeddings $\gaa^\la$ of $\pd\Om$
with  $\gaa\in\cL B^{k+\beta,j}(\pd\Om)\cap\cL C^{1,0}(\pd\Om)$
to a family of
embeddings $\Gaa^\la$ of $\ov\Om$ with  $\Gaa\in\cL B^{k+\beta,j}(\ov\Om)
\cap\cL C^{1,0}(\ov\Om)$.
The two lemmas and \rl{chainp} form basic properties of   H\"older spaces for interior
 domains with parameter.
We   also define
H\"older spaces for   exterior domains with parameter. Finally, we   extend
estimates on Cauchy transforms and simple
and double layer potentials to exterior domains for our H\"older spaces. To extend
 families of finitely
smooth embeddings
from $\pd\Om$ into $\ov\Om$, we   apply a type of Whitney extensions with parameter.
 However, the real analytic
extension is more  subtle, for which we  need the real analyticity of  solutions to the
Dirichlet problem  with real analytic parameter. The connection between extensions of
functions and solutions
of Dirichlet problem was observed by Whitney~\ci{Whthfo}.
When an exterior domain $\Om'=\cc\setminus \ov\Om$ is considered,  we   assume  without
loss of generality  that $\Om$ is simply connected.

\le{whit} Let $J,K$ be non-negative integers or $\infty$,
 and  let  $0\leq\beta<1$.  For $0\leq k<K$, let $\e_{k+1}$ be
 decreasing positive numbers and $0\leq j_k<J+1$ be non-decreasing
 integers. Suppose that $j_k=J$ for some $k$ if $J<\infty$ and $j_k$ tends to $J$ if
  $J$ and $K$ are infinite.
Let $\Om$ be a bounded domain with $\pd\Om\in\cL C^{K+\beta}\cap\cL C^1$.
Suppose that $f_i\in\cL B_*^{K-i+\beta,J}(\pd\Om)$
{\rm(}resp. $\cL C_*^{K-i+\beta,J}(\pd\Om)${\rm)} for $0\leq i<K+1$.
  There exists
  $Ef\in\cL B_*^{K+\beta,J}(\ov\Om)$ {\rm(}resp. $\cL C_*^{K+\beta,J}(\ov\Om)${\rm)} satisfying
  $\pd^i_\nu Ef=f_i$ for $0\leq i< K+1$. Furthermore, $Ef$ has the following properties.
\bppp
\item
The extension operator $f\to Ef$ depends only on $i$, $\pd\Om$ and the  upper bound
$M_i$ of $\e_{i}^{-1}$   and
  $|f_{l}|_{i-l+\beta,j_{i-1}}$ for $0\leq l\leq i$ and $0<i<K+1$.
  Moreover, \ga \label{exm} |Ef|_{k+\beta,j_k}\leq
\e_{k+1}+ C_{k}(\e,f)\sum_{i\leq k}|f_i|_{k-i+\beta,j_k}, \quad k<
K\leq\infty,
\\
\label{exm+}
|Ef|_{K+\beta,j}\leq C_{K}(\e,f)\sum_{i\leq K}|f_i|_{K-i+\beta,j},
\quad K<\infty, \quad 0\leq j<J+1.\end{gather}
Here $C_k(\e,f)$ depends only on $k$, $\pd\Om$ and $M_l$  for $0<l\leq k$.
\item
Assume further that
$f_0$ is constant and $f_i$ vanish in a neighborhood
$U$ of $p$ in $\pd\Om\times[0,1]$   for all $i>0$ with $i+J\leq K$. Then $Ef$ is constant on   some neighborhood
$V$ of $p$ in $\ov\Om\times[0,1]$. Moreover, $V$ depends only on $U$.
\eppp
\ele
\begin{proof}  We cover $\pd\Om$ by   open subsets $U_p$ of $\ov\Om$
and find $\cL C^\infty$
 functions $\chi_p$ with compact support in $U_p$ such that
 $\sum_{p=1}^{p_0}\chi_p=1$ on $\pd\Om$. Here $p_0$ is finite.
  We may  assume that there
 exist an open subset $V_p$ of $\ov\Om$, which contains $U_p$, and
 a $\cL C^{K+\beta}\cap\cL C^1$ diffeomorphism $\psi_i$ on $\ov{V_p}$
 which maps $V_p$ onto $V^*=(-2,2)\times[0,2)$ and
 $U_p$ onto $U^*=(-1,1)\times[0,1)$.
 We also assume
  that $\psi_i$ sends $\pd_\nu$ into $A_p\pd_y+B_p\pd_x$ such
  that  $1/C<|A_p|<C$. Here $A_p$ and $B_p$ are of class $\cL C^{K-1+\beta}\cap\cL C^0$
 on $\ov{V^*}$.
It suffices to find $h_p\in\cL B_*^{K+\beta,J}(\ov\Om)$ such that
$\supp h_p\subset V_p\times[0,1]$ and $\pd_\nu^ih_p=\sum_{l=0}^i
\binom{i}{l} f_l\pd_\nu^{i-l}\chi_p=h_{pi}$ on $(V_p\cap\pd\Om)
\times[0,1]$.  Then $Ef=\sum h_p$ is a desired extension.

 We now drop the subscript $p$
in all expressions.
In the new coordinates, we still denote $f,  h_{i}$, $\chi$, and $\nu$ by the
 same symbols. For instance,
$h_i$ denotes $h_{pi}\circ\psi_p^{-1}$.
 We have
$$
\pd_\nu^i=A^i\pd_y^i+\sum_{0<m\leq i}\sum_{l\leq m,l<i}B_{ilm}\pd_x^{m-l}\pd_y^l, \quad
\pd_y^i=A^{-i}\pd_\nu^i+\sum_{0<m\leq i}\sum_{l\leq m,l<i}\tilde B_{ilm}\pd_x^{m-l}\pd_\nu^l.
$$
Note that $B_{ilm}$ and $\tilde B_{ilm}$ are in $\cL C^{K-i+(m-1)+\beta}\cap\cL C^0$
on $\ov{V^*}$.  To achieve
$\pd_\nu^if=h_i$ on $V^*$,  we need
\eq{pyif}
\pd_y^if=A^{-i}h_i+\sum_{0<m\leq i}\sum_{l\leq m,l<i} \tilde B_{ilm}\pd_x^{m-l}h_l.
\eeq
Changing notation, we write the above as $\pd_y^if=f_i$.
The support of $f_i$ is contained in $[-1,1]\times[0,1]$ and $f_i$ is in
$\cL B_*^{K-i+\beta,J}([-2,2])$.
 If necessary we will  replace $\e_k$ by $\e_k/C$ with $C$
depending the  numbers of sets $U_p$ and diffeomorphisms $\psi_p$.

Fix $0<\del<1/2$. Let $\phi$ be a smooth function on $\rr$ with support
in $(-\del,\del)$ such that
$
\int_{\rr
}\phi(y)\, dy=1.
$
 We  first need to replace $y^if_i(x,\la)$ by $y^ig_i(x,y,\la)$
to achieve the   $\cL B_*^{K+\beta,J}$ smoothness;   when $K=\infty$, we still
need the replacement to estimate the $|\cdot|_{k+\beta,j}$ norm of
$y^ig_i(x,y,\la)$ via $|f_i|_{k-i+\beta,j}$. This requires us to  correct $i$-th
  $y$-derivative  of $y^ig_i(x,y,\la)$  due to the presence of  $y^{i_1}g_{i_1}(x,y,\la)$ for
  ${i_1}<i$. Take a cutoff $\chi(y)$ which has support in $(-1,1)$
  and equals $1$ on $(-1/2,1/2)$.
Let $a_i\in\cL B_*^{K-i+\beta,J}([-2,2])$    have   support in $[-1,1]\times[0,1]$.
 With constants $\del_i>0$ to be determined,
consider
\eq{gixy}
g_i(x,y,\la)=\int_{\rr} a_i(x-yz,\la)\phi(z)\, dz, \quad b_i(x,y,\la)=
\f{1}{i!}y^ig_i(x,y,\la)\chi(\del_i^{-1}y).
\eeq
It is clear that $\pd_\la^jg_i(\cdot,\la)$ are   $\cL C^{\infty}$ away from $y=0$ and
  $g_i\in
\cL C_*^{0,J}(\ov{V^*})$.
Also $g_i$ have support in $V^*\times[0,1]$.

To show that $b_i\in\cL B_*^{K+\beta,j}(\ov{V^*})$, it suffices to
show that $\pd_\la^j\pd^Ib_i$
 extend to functions
in $\cL B_*^{\beta,0}(\ov{V^*})$
for all $I$ with $|I|=k<K+1$.
 We first  derive a formula for derivatives. Write $I=I_1+I_2$ with $|I_2|=\min\{k,K-i\}$. We have
  \al\nonumber
\pd^{I_2}\int a_i (x-yz,\la) \phi (z)\, dz& =\int \pd^{I_2}( a_i(x-yz,\la))
 \phi (z)\, dz\\
 &  =\int (\pd^{|I_2|} a_i)(x-yz,\la)\phi_{I_2}^{(0)}(z)
  \, dz.\nonumber
\end{align}
Here and for the rest of the proof, $\phi_{*}^{(m)}(z)$   denotes
 a linear combination of
 $z^l\phi^{(n)}(z)$ with   $l\leq k$ and   $n\leq m$.
Assume now that $y\neq0$.
Changing   variables and interchanging the differentiation
and integration, we get
$$\pd^{I_1}\int (\pd^{|I_2|} a_i)(x-yz,\la)\phi_{I_2}(z)
  \, dz=
  \int \f{1}{y^{|I_1|+1}}(\pd^{|I_2|} a_i)(z,\la)\phi_{I_2I_1}^{(|I_1|)}
  \Bigl(\f{x-z}{y}\Bigr)\, dz. $$
Changing  variables again for the last integral, we get the formula
\eq{pdi1i2}
\pd^{I_1+I_2}\int a_i (x-yz,\la) \phi (z)\, dz = y^{-|I_1|}
  \int (\pd^{|I_2|} a_i)(x-yz)\phi_{I_1I_2}^{(|I_1|)}(z)\, dz,\quad y\neq 0.
  \eeq

Consider first the case where $i\leq k< K+1$.  For  $|I|=k$ and $y\neq 0$, we have
$$
\pd^I(y^ig_i(x,y,\la))=\sum_{i_1+|I_2|=k}
 C_{i_1I_2}\pd_y^{i_1}y^i\pd^{I_2}\int a_i (x-yz,\la) \phi (z)\, dz
$$
with $i_1\leq i$. Write $I_2=I_3+I_4$ with $|I_3|=k-i$ and $|I_4|=i-i_1$.
 By \re{pdi1i2} we get
\al\label{pjli}
&\pd^I(y^ig_i(x,y,\la))=\sum_{i_1\leq i}
 \int(\pd^{k-i} a_i) (x-yz,\la) \phi^{(i-i_1)}_{Iii_1 }
 (z)\, dz, \quad |I|=k.
\end{align}
It is obvious that the right-hand side extends to a function of class
 $\cL B_*^{\beta,j}(\ov{V^*})$.
Also, the $\cL C^\beta(\ov{V^*})$ norm of $\pd_\la^jD^I(y^ig_i(x,y,\la))$ in variables $x,y$
 is bounded by $C_{k,i}
|a_i|_{k-i+\beta,j}$.
 By dilation,  it is easy
to see that for $\chi^{\del_i}(z)=\chi(\del_i^{-1}z)$ with $0<\del_i<1$, we have
   $|\chi^{\del_i}|_{k+\beta}<C_k\del_i^{-k-\beta}$. Therefore,
\eq{bik1}
|b_i|_{k+\beta,j}\leq C_{k,i}\del_i^{-k-\beta}|a_i|_{k-i+\beta,j}, \quad 0\leq i\leq k.
\eeq

Next, we want to verify that $\pd_y^i(y^ig_i(x,y,\la))=i!a_i(x,\la)$ at $y=0$.
Fix $x$. By
$a_i\in\cL C_*^{0,j}([-2,2])$, $\supp a_i\subset[-1,1]\times[0,1]$ and \re{pjli} with $k=i$,
  the value
of $\pd_y^i(y^ig_i(x,y,\la))$ at $y=0$
depends only on $a_i(x)$. However,  the identity holds trivially for any   $\del_i\in(0,1)$,
 when $a_i$ is   constant. We now determine $a_i$ by taking $a_0=f_0$, and
 \eq{aifi}
 a_i=f_i- \pd_y^i|_{y=0}(b_0(x,y,\la)+\cdots +b_{i-1}(x,y,\la)).
 \eeq
By \re{bik1}, for $j<J+1$ and $i+k<K+1$
we get
\al\label{aifi+}
|a_i|_{k+\beta,j}&\leq|f_i|_{k+\beta,j}+\sum_{l<i}
 C_{i+k}\del_{i-1}^{-i-k-\beta}|a_l|_{k+i-l+\beta,j}
\\ &\leq|f_i|_{k+\beta,j}+\del_{i-1}^{-(i+k+\beta)l}\sum_{l<i}  C_{i+k}'
|f_l|_{k+i-l+\beta,j}, \nonumber
\end{align}
Here we have assumed that $\del_l$ decreases.
In particular, $a_i$ is in $\cL B_*^{K-i+\beta,J}([-2,2])$.
We have achieved
\eq{1ipd}
 \pd_y^i(b_0(x,y,\la)+\cdots+b_i(x,y,\la))=f_i(x,\la), \quad y=0.
\eeq

Consider now the case where $i> k=|I|$. By the product rule and \re{pdi1i2},
\al\label{pjli+}
&\pd^I(y^ig_i(x,y,\la)\chi(\del_i^{-1}y))=\sum C_{i_1i_2I_3}\pd_y^{i_1}y^i\cdot
\pd_y^{i_2}\chi(\del_i^{-1}y)\cdot
\pd^{I_3}g_i(x,y,\la)\\
&\qquad =\sum\tilde C_{i_1i_2I_3}
\pd_y^{i_2}\chi(\del_i^{-1}y)\cdot
y^{i-i_1-|I_3|}
 \int a_i(x-yz,\la)\phi^{(|I_3|)}(z)\, dz.\nonumber
\end{align}
Here the summation runs over $i_1+i_2+|I_3|=k$.
The $\cL C^\beta$ norm of
$(\del_i^{-1}y)^{i-i_1-|I_3|}\cdot \chi^{(i_2)}(\del_i^{-1}y)$ is bounded by $C\del_i^{-\beta}$.
Thus for  any  $\del_i\in(0,1)$
\ga
\pd_y^k(b_i(x,y,\la))=0, \quad y=0, \label{bik2-} \quad k<i,\\
\label{bik2}
|b_i|_{k+\beta,j}\leq C_{k,i}\del_i^{i-k-\beta}|a_i|_{\beta,j},\quad k<i.
\end{gather}
By \re{bik2} we  inductively  choose decreasing
$\del_{i}$ such that
\eq{bik4}
|b_i|_{i-1+\beta,j_{i-1}}\leq\del_{i}^{1-\beta}|a_{i}|_{\beta,j_{i-1}}
\max_{k<i} C_{k,i}<\f{\e_{i}}{2^{i}},\qquad i\geq1.
\eeq
Take $Ef(x,y,\la)=\sum_{i+J\leq K} b_i(x,y,\la)$. By  \re{bik4},
 we get $\sum_{i>k}|b_i|_{k+\beta,j_k}<\e_{k+1}$
for $0\leq k<K$.
Combining it with \re{1ipd}, \re{bik2-} and \re{bik1}, we obtain $\pd_y^iEf=f_i$ at $y=0$ and
\re{exm}, respectively. Combining \re{bik1} for $k=K$ and \re{aifi+}  gives us
\re{exm+}. From \re{gixy} and \re{pjli}, we see that $b_k\in\cL C_*^{K+\beta,J}(\ov{V^*})$
 when $f_k\in\cL C_*^{K-k+\beta,J}(\ov{V^*})$. Using the convergence
of $\sum |b_i|_{k+\beta,j_k}$ again, we obtain $Ef\in\cL C_*^{K+\beta,J}(\ov{V^*})$.
The dependence of $Ef$ and $C_k(\e,f)$  on norms of $f_i$, as stated
in the lemma,  is determined by \re{pyif}, \re{aifi+} with $k=0$ and \re{bik4}.

Note that (ii) follows from the extension formulae immediately.
Indeed, the partition of unity for $\pd\Om$ preserves conditions $\pd_\nu^if=0$ for $i>0$
and $f_0$ being constant  in a neighborhood of $z_0$ in
$\pd\Om$, by starting with one of
$\chi_{p}$'s to be $1$ near $z_0$ and all other $\chi_p$'s
 to be $0$ near $z_0$. From \re{pyif}, we have $\pd_y^if=0$ for $i>0$ near $z_0$.
  By \re{gixy} and shrinking
the support of $\phi$ if necessary,
  $Ef$ is constant near $z_0$.
 \end{proof}

\le{whitc} Let $J,K,\beta,\e_{k+1},j_k$ be as in \rla{whit}. Assume further that
$J\leq K$, and $K> k+j_k$ for $0<k<K$.
Let $\Om$ be a bounded domain with $\pd\Om\in\cL C^{K+\beta}\cap\cL C^1$.
Suppose that $f_i\in\cL B^{K-i+\beta,J}(\pd\Om)$
{\rm(}resp. $\cL C^{K-i+\beta,J}(\pd\Om)${\rm)} for  all $i\geq0$
satisfying $i+J\leq K$.
  There exists
  $Ef\in\cL B^{K+\beta,J}(\ov\Om)$ {\rm(}resp. $\cL C^{K+\beta,J}(\ov\Om)${\rm)} satisfying
  $\pd^i_\nu Ef=f_i$ for $i+J\leq K$. Furthermore, \ga
\label{cexm}
\|Ef\|_{k+\beta,j_k}\leq \e_{k+1}+ C_{k}(\e,f)\sum_{i\leq k, i+J\leq K}
\|f_i\|_{k-i+\beta,j_k}, \quad k< K\leq\infty,
\\
\label{cexm+}
\|Ef\|_{K+\beta,j}\leq C_{K}(\e,f)\sum_{i\leq K-J}\|f_i\|_{K-i+\beta,j},
\quad K<\infty, \quad 0\leq j<J+1,\end{gather}
where the extension operator $f\to Ef$ depends only on $i$, $\pd\Om$, and the
upper bound $M_i$ of  $\e_{i}^{-1}$
and $\|f_{l}\|_{i-l+j_{i-1}+\beta,j_{i-1}}$ for $l\leq i$, $i>0$ and $i+J\leq K$.
Furthermore, $C_{k}(\e,f)$ depends on $k$, $\pd\Om$ and $M_l$ for $l\leq k$;
$Ef$ is constant near $(z_0,0)\in\ov\Om\times[0,1]$
if near $z_0\in\pd\Om$, $f_0-f_0(z_0)$ and $f_i$ vanish for $i>0$ and $i+J\leq K$.
\ele
\begin{proof} We   use $Ef=\sum_{i+J\leq K} b_i$ with $b_i$ being of the form \re{gixy}.
We still have $\pd_y^kb_i=0$ for $k<i$ and $\pd_y^ib_i=a_i$ at $\la=0$ as they hold for
  $\del_i\in(0,1)$, provided $a_i\in\cL
B^{K-i+\beta,J}(\pd\Om)(\subset\cL B^{K-J-i,J}_*(\pd\Om))$.
We rewrite  previous estimates in norms $\|\cdot\|$ instead of $|\cdot|$.
Assume that $i+J\leq K$, $j<J+1$, and  $(j\leq )\ k<K+1$.
By \re{bik1} and \re{bik2}, we have
\al\label{bik1+}
\|b_i\|_{k+\beta,j}&=\max_{l\leq j}|b_i|_{k-l+\beta,l}\leq
\max_{l\leq j}\{C_{k,i}\del_i^{-k-\beta}|a_i|_{k_1+\beta,l}\}\\
&\quad\leq
C_{k,i}\del_i^{-k-\beta}\|a_i\|_{k_2+\beta,j},\qquad i\leq k.
\nonumber
\end{align}
Here $k_1=\max\{k-l-i,0\}$ and $k_2=\max\{k-i,j\}\leq K-i$.  By \re{bik2} again, we have
\ga\label{bik2+}
\|b_i\|_{k+\beta,j}=\max_{l\leq j}|b_i|_{k-l+\beta,l}
\leq  C_{k}\del_i^{1-\beta}\|a_i\|_{j+\beta,j}, \quad i>k.
\end{gather}
Assume further that $l\leq i$. By \re{bik1+} we  get
\al
\nonumber
\|a_l\|_{ k+\beta,j}&\leq\|f_l\|_{ k+\beta,j}
+\sum_{m<l}\|b_m\|_{k+l+\beta,j}
\\
\nonumber
 &\leq\|f_l\|_{ k+\beta,j}+\del_{i-1}^{-(i+k+\beta)}\sum_{m<l} C_{i}
\|a_m\|_{ k+l-m+\beta,j},\\
\|a_l\|_{ k+\beta,j}&
\label{A824-}
\leq\|f_l\|_{ k+\beta,j}+\del_{i-1}^{-(i+k+\beta)i}\sum_{m<l} C_{i}'
\|f_m\|_{ k+l-m+\beta,j}.
\end{align}
Thus, by  \re{A824-} with $l=i$, $a_i$  is in $\cL B^{K-i+\beta,J}$;
by \re{bik1+}-\re{bik2+},  $b_i\in\cL B^{K+\beta,J}$.
 Therefore, by \re{bik2+}-\re{A824-},  we can inductively choose decreasing $\del_i$ such that
\gan
\|b_i\|_{i-1+\beta,j_{i-1}}\leq
  C_{i}\del_i^{1-\beta}\|a_i\|_{j_{i-1}+\beta,j_{i-1}}<\f{\e_i}{2^i},
\end{gather*}
where $\del_i$ depends on the upper bound of $C_i>1,\e_i^{-1}$ and $
\|f_l\|_{i-l+j_{i-1}+\beta,j_{i-1}}$ for $l\leq i$.
The rest of arguments in the previous proof is valid.
\end{proof}
The above proof
for non-parameter case without
estimate on norms   is in~\ci{Honize} (p.p.~16 and 18).
See also~\ci{BGR} for   different spaces with parameter.
For the proof of \rt{nrefl}, we need the following extension lemma.

\le{whit+} Let $J,K$ be integers or $\infty$
 and  let  $0\leq\beta<1$.
Let $\Om$ be a bounded domain with $\pd\Om\in\cL C^{K+\beta}\cap\cL C^1$.
\bppp
\item  Suppose that    $f_j\in\cL C^{K+\beta}(\pd\Om)$
 % {\rm(}resp. $\cL C^{K+\beta}(\ov\Om)${\rm)}
   for $0\leq j<J+1$.
  There exists
  $Ef\in\cL C_*^{K+\beta,J}(\pd\Om)$ % {\rm(}resp. $\cL C_*^{K+\beta,J}(\ov\Om)${\rm)}
  satisfying
  $\pd^j_\la Ef=f_j$ for $0\leq j< J+1$ at $\la=0$.
  \item Let $J\leq K$.
Suppose that    $f_j\in\cL C^{K-j+\beta}(\pd\Om)$
% {\rm(}resp. $\cL C^{K-j+\beta}(\ov\Om)${\rm)}
 for $0\leq j<J+1$.
  There exists
  $Ef\in\cL C^{K+\beta}(\pd\Om\times[0,1])$
   % {\rm(}resp. $\cL B^{K+\beta}(\ov\Om\times[0,1])${\rm)}
  satisfying
  $\pd^j_\la Ef=f_j$ for $0\leq j< J+1$ at $\la=0$; in particular
  $Ef\in\cL B^{K+\beta,K}(\pd\Om)$.
   % {\rm(}resp. $\cL B^{K+\beta,K}(\ov\Om)${\rm)}.
\item In $(i)$ and $(ii)$,   if  near
  $p\in\pd\Om$ $f_0$ is constant and $f_i$ vanish for $i>0$, then $Ef$ is
  constant on $V\times[0,1]$ for some neighborhood
  $V$ of $p$.
  \item  $(i)$, $(ii)$ and $(iii)$ hold if $\ov\Om$ substitutes for $\pd\Om$.
 \eppp
 \ele
\begin{proof} (i).  When $J$ is finite, we simply
take $Ef(x,\la)=\sum_{j=0}^J\la^j f_j(x)$. Assume that $J=\infty$. Let $\chi(\la)$
be a $\cL C^\infty$ function which has
  support in $[0,1/2]$ and equals $1$ near $\la=0$.
We choose  $0<\del_j<1/2$ satisfying
$
 \del_j|f_j|_{2j}|\chi|_j<2^{-j}.
$ Then
$
Ef(x,\la)=\sum\la^jf_j(x)\chi(\del_j^{-1}\la)
$
is a desired extension.

(ii)-(iii).
The extension $Ef$ is a special case of \rl{whit} where the parameter $\la$ is
 absent and the variable  $y$ in its proof
is replaced by $\la$.
We first find an extension $Ef\in\cL B^{K+\beta,J}(\pd\Om)$.
Using a partition of unity and  local change of coordinates of class
$\cL C^{K+\beta}\cap\cL C^1$, we may assume that
$\pd\Om$ contains $[-2,2]\times\{0\}$,  $\ov\Om$ contains $[-2,2]\times[0,1]$, and $f_i$ have
support in $[-1/4,1/4]\times\{0\}$.
 Locally we find an extension $Ef\in\cL C^{K+\beta}([-2,2]\times[0,1])$ such
 that $\pd_y^jEf(x,0)=f_j(x)$ and $\supp Ef\subset[-1,1]\times[0,1/2]$.
 Then $Ef(x,\la)$ is a desired extension. It is clear that (iii) follows from
 the extension formulae.

(iv).
For extension $Ef\in\cL C^{K+\beta}(\ov\Om\times[0,1])$, again by partition of
unity for $\ov\Om$, we may assume that all $f_i$
have   support in $(-1/4,1/4)\times[0,1/4)$. Next, we apply \rl{whit} for the
non-parameter version
and extend $f_i$ across the boundary of $\pd\Om$ to $(-1/2,1/2)\times(-1/2,1/2)$.
We still have $f_i\in\cL
C^{K-i+\beta}$ and $f_i$ have compact support.
We substitute   \re{gixy} with
\eq{gixl}
g_i(x,\la)=\int_{\rr^2} a_i(x-\la z)\phi(z)\, dz, \quad b_i(x,\la)=
\f{1}{i!}\la^ig_i(x,\la)\chi(\del_i^{-1}\la).
\eeq
where $a_i\in\cL C^{K+\beta-i}([-3/4,3/4]^2)$ and $\supp a_i\subset(-1/2,1/2)^2$.
The arguments
 in the proof of \rl{whit} are written for one   variable $x$.
However, when $x\in\rr^2$ or in higher dimensional Euclidean spaces, the identities
 require minor changes only. We will leave the details to the reader. In conclusion, one
can find $Ef(x,\la)=\sum b_i(x,\la)$ such that
$Ef\in\cL C^{K+\beta}([-2,2]^2\times[0,1])\subset\cL B^{K+\beta,K}([-2,2]^2)$,
$\supp Ef\subset[-1,1]^2\times[0,1]$ and $\pd_\la^jEf=f_j$.
\end{proof}

\begin{rem}
As shown in \rl{chainp},  the composition of functions
is   restrictive for spaces $\cL C^{k+\all,j}$.
We do not know if the $g_i$
in \re{gixl} are of class $\cL C^{K+\all,j}(\ov\Om)$ when $K+\all$ is finite
but  not an integer; therefore, we do not
know if  there exists an extension $Ef$ in (ii) of \rl{whit+} that is of class
 $\cL C^{K+\all,J}(\ov\Om)$.
\end{rem}

\le{extm} Let $1\leq k\leq\infty$ and $0\leq\beta<1$.
 Let $\Om_i$ be     bounded domains of  $\cL C^{k+\beta}$ boundary.
Let $\gaa$ be an orientation preserving $\cL C^{k+\beta}$ diffeomorphism from
$\pd\Om_1$ onto $\pd\Om_2$.
Then $\gaa$ extends to a $\cL C^{k+\beta}$ diffeomorphism from $\ov\Om_1$ onto $\ov\Om_2$
and it also extends to a $\cL C^{k+\beta}$ diffeomorphism from $\ov{\Om_1'}$ onto $\ov{\Om_2'}$
which is identity on $|z|>R$ when $R$ is sufficiently large.
\ele
\begin{proof} We first prove the assertions when $\Om_i$ are  simply connected.
 Let $\gaa_1\colon\pd\D\to \pd\Om_1$ be a $\cL C^{k+\beta}$
 parameterization. Approximate $\gaa_1$ in $\cL C^1$ norm by a $\cL C^\infty$
 parameterization $\tilde\gaa_1\colon
 \pd\D$ to $\pd\tilde\Om_1$. Then $\tilde\gaa_1\gaa_1^{-1}-I$ has a small  $\cL C^1$
 norm on $\pd\Om_1$.
 By Whitney's extension theorem, it extends to a $\cL C^{k+\beta}$ mapping $\var$
  mapping from $\ov{\Om_1}$ into $\cc$
 with small $\cL C^1$ norm. Then $I+\var$ is a $\cL C^{k+\beta}$ diffeomorphism mapping
  $\ov{\Om_1}$ onto $\ov{\Om_1^*}$ with $\cL C^\infty$
 boundary. Therefore, we may assume that $\pd\Om_i$ have $\cL C^\infty$ boundary. Thus, we may
 further assume that $\Om_i$ are the unit disc,  say,  by  Kellogg's  Riemann mapping theorem.
Since $\gaa$ preserves the orientation of the unit circle, then $\gaa(e^{i\theta})
=e^{i(\theta+a(\theta))}$.
Here $a$ is $2\pi$-periodic and $1+a'>0$.  Let $\rho\colon[0,\infty)\to [0,1]$ be a smooth
 function which has support in $(1/2,2)$ and equals $1$ near $1$.
Then $\Gaa_0(re^{i\theta})=re^{i(\theta+\rho(r)a(\theta))}$ is a desired extension.

To extend $\gaa$ to the unbounded component, using time-one mappings of vector fields of compact support,
we may assume that $0\in\Om_i$. Using the inversion
$\iota_{0}(z)=1/z$
it suffices to show that in the above arguments we can extend $\gaa$ to a $\cL C^{k+\beta}$
 diffeomorphism from
$\ov{\Om_1}$ onto $\ov{\Om_2}$, which is the identity map near the origin.
Composing $\Gaa_0$
with the time-one map of a vector field which vanishing near $\pd\Om_1$, we may assume
 that  $\Gaa_0(0)=0$.
   Using a dilation, we may assume that $\Gaa_0(z)=\hat\Gaa_0(z)+E(z)$, where
    $|E|+|\pd E|<\e$ on $|z|<1/2$ and $\hat\Gaa_0$ is the linear part of $\Gaa_0$ at $z=0$. Let
$\chi=0$ on $|z|<1/4$ and $\chi=1$ on $|z|>1/2$. When $\e$ is small, $\Gaa_1(z)
=\Gaa_0(z)+\chi(|z|)E(z)$ is still a $\cL C^{k+\beta}$ diffeomorphism. Now $\Gaa_1$ is
linear near $0$. Since $\Gaa_1'(0)$ preserves orientations,
 by the Jordan normal form of $2\times 2$ matrices  we find two flows
 $X^t$ and $Y^t$ of vector fields vanishing at $0$ such that
 $\Gaa_1'(0)=X^1\circ Y^1$.
Let $\rho$ be a cutoff function which equals $1$ near the origin and has support in a
 small neighborhood
of the origin.
Then $(\rho Y)^{-1}\circ(\rho X)^{-1}\circ\Gaa_1$ is a desired extension.

The general case for bounded domains
 is obtained  by  induction on $m+1$, the number
of components of $\pd\Om_i$.
We have proved the lemma when $m=0$. Let $C_1$ be a component of the
inner boundary of $\Om_1$. Let $C_2=\gaa(C_1)$. Let $\om_i$ be bounded components of
 $\cc\setminus C_i$. Applying
results proved in previous paragraph, we find an extension $\Gaa_1$ of $\gaa|_{\pd\om_1}$
to $\ov{\om_1'}$. Replacing
$\gaa$ by $\Gaa_1\circ\gaa$, we may assume that $\gaa$ is the  identity on $C_1$.
Using a diffeomorphism of
class $\cL C^{k+\beta}$ from $\ov{\om_1'}$ onto $\cc\setminus\D$, we may assume that
 $C_1=C_2$ is the unit circle.
Let $\tilde\Om_i=\pd\Om_i\cup\ov\D$.
We know that $\gaa$ extends to a $\cL C^{k+\beta}$ diffeomorphism $\Gaa_0$ from $\ov{\tilde
\Om_1}$ onto $\ov{\tilde
\Om_2}$.  By the argument in the previous paragraph, we may achieve $\Gaa_0$ to be the identity
on $|z|<\e$ for some $
0<\e<1$. Let $\Gaa_2$
be a $\cL C^\infty$ diffeomorphism on $\cc$ which is the identity on the complement of the disc
 $\D_\rho$
and sends $\ov\D$ into $\D_\e$.
Here $\rho>1$ and $\ov D_\rho$ is contained in $\tilde\Om_1$. Then
$\Gaa_2^{-1}\circ\Gaa_0\circ\Gaa_2$ is a desired extension of $\gaa$
to $\ov{\Om_1}$.
\end{proof}

The proof of next lemma needs \rt{dnpb}
for the Dirichlet problem for interior domains. Our arguments are valid because
\rt{dnpb} are for embeddings $\gaa^\la$ which are restrictions of $\Gaa^\la$.
\le{spac}  Let $j,k$ be non negative integers or $\infty$. Let $0\leq\beta<1$.
 Let $\Om$ be a bounded domain in $\cc$ with $\pd\Om\in\cL C^{k+\beta}\cap\cL C^1$.
 Let $\gaa^\la$ be a family of orientation-preserving
 embeddings from $\pd\Om$ onto $\pd\Om^\la$
 with $\gaa\in\cL C^{1,0}(\pd\Om)$.
  Assume that $\gaa^\la$ send outer boundary to outer boundary.
For each $\la_0\in[0,1]$, there   exists $\delta>0$ such that
 if
 $I=[0,1]\cap[\la_0-\del,\la_0+\del]$
   substitute for $[0,1]$ in all function spaces,
   then $\gaa^\la$ extend to $\cL C^1$ embeddings
$\Gaa^\la$
from $\ov{\Om}$ onto $\ov{\Om^\la}$ with $\Gaa\in  \cL C^{1,0}(\ov\Om)$. Furthermore,
    if $\gaa$ is in $\cL B_*^{k+\beta,j}(\pd\Om),\cL C_*^{k+\beta,j}(\pd\Om)$,
$\cL B^{k+\beta,j}(\pd\Om)\,(k\geq j)$ and $\cL C^{k+\beta,j}(\pd\Om)\, (k\geq j)$,
  there exists an extension $\Gaa$ in $\cL B_*^{k+\beta,j}(\ov\Om),\cL C_*^{k+\beta,j}(\ov\Om)$,
$\cL B^{k+\beta,j}(\ov\Om)$ and $\cL C^{k+\beta,j}(\ov\Om)$, respectively;
 and if $\pd\Om$ and $\gaa$  are real analytic, then   $\Gaa\in\cL C^\om(\ov\Om\times I)$.
\ele
\begin{proof} With $\del$ to be determined, set $I=[\la_0-\del,\la_0+\del]\cap[0,1]$.
As stated in the lemma the space $\cL C^{1,0}(\ov\Om)$ and others depend on $\del$.

(i). We apply \rl{extm} and extend $\gaa^{\la_0}$ to a $\cL C^1$
diffeomorphism $\Gaa_0^{\la_0}$ from $\ov\Om$ onto
$\ov{\Om^{\la_0}}$.    Approximate $\Gaa_0^{\la_0}$ by a smooth map $\Gaa_1^{\la_0}$
and set $\Gaa_1^\la=\Gaa_1^{\la_0}$ for
all $\la$.
  We have
$|\gaa^{\la}-\Gaa_1^{\la}|_{1}<\e<\e_0$ for $\la\in I$ when $\del$ is sufficiently small.
  We   apply \rl{whit}   and extend
$\gaa-\Gaa_1$ to an element $\Gaa_2\in\cL B_*^{k+\beta,j}(\ov\Om)\cap
\cL C^{1,0}(\ov\Om)$ such
that $ |\Gaa_2|_{1,0}<\e_0+C(\e_0)\e<2\e_0$. Then $\Gaa^\la=\Gaa_2^\la+\Gaa_1^\la$ are
 extensions of $\gaa^\la$.
Also $|\Gaa^{\la_0}-\Gaa_0^{\la_0}|_1\leq 2\e_0$. Since $\Gaa_0^{\la_0}$ is an embedding,
 then $\Gaa^{\la_0}$
is also an embedding when $\e_0$ is sufficiently small. By continuity in $\cL C^1$ norm,
we know that $\Gaa^\la$
are embeddings for $\la\in I$ when  $\del$ is sufficiently small.
 Analogously, we can find the extensions for  other three cases.

(ii). For the real analytic case, the proof in (i) via extension does not apply. Instead,
we solve a Dirichlet problem with parameter.
We extend $\gaa^{\la_0}$ to a smooth embedding $\Gaa^0$ and approximate $\Gaa^0$ by real
analytic embeddings $\Gaa^{1/j}$ such that $|\Gaa^{1/j}-\Gaa^0|_{3/2}<1/j$.
For $f\in\cL C^{3/2}(\pd\Om^{1/j})$, let $T_jf$ be the unique harmonic function  $\Om_j$ which
is continuous up to the boundary
and has boundary value $f$.  Thus $T_j$ maps $\cL C^{3/2}(\pd\Om^{1/j}))$ into
$\cL C^{3/2}(\ov{\Om^{1/j}})$.
 We know that $T_j$ is injective
and the range of $T_j$ is the Banach space of harmonic functions on $\Om_j$
of class $\cL C^{3/2}(\ov{\Om^{1/j}})$. The inverse mapping
of $T_j$ is the restriction mapping, which is obviously bounded.
By the open mapping theorem, $T_j$ is bounded
with norm $\|T_j\|$.
Next, we want to show that the norms $\|T_j\|$ are bounded too. Define
$$
\Gaa^{\theta/{(j+1)}+(1-\theta)/j}=\theta\Gaa^{1/{(j+1)}}+(1-\theta)\Gaa^{1/{j}},\quad
0\leq\theta\leq1.
$$
Then $\{\Gaa^\la\}\in\cL C^{3/2,0}(\ov\Om)$.  When $\la$ is sufficiently small,
 $\Gaa^\la$ embeds $\ov{\Om}$ onto
$\ov{\Om^\la}$. Assume for the sake of contradiction that $\|T_j\|$ are not   bounded. We find
$f^{1/j}\in\cL C^{3/2}(\pd\Om^{1/j})$ such that $|T_jf^{1/j}|_{3/2}=1$ and
$|f^{1/j}|_{3/2}\to0$ as $j\to\infty$. Define
$$
f^{\theta/{(j+1)}+(1-\theta)/j}\circ\Gaa^{\theta/{(j+1)}+(1-\theta)/j}=
\theta f^{1/{(j+1)}}\circ\Gaa^{1/{(j+1)}}+(1-\theta)f^{1/j}\circ\Gaa^{1/j}.
$$
Then $f\in\cL C^{3/2,0}(\pd\Om_\Gaa)$ for $f^0=0$. Let $u^\la$ be the harmonic function
on $\Om^\la$ which is continuous
up to boundary and has
  boundary value $f^\la$. Thus $u\in\cL C^{3/2,0}(\ov\Om_\Gaa)$ and $u^0=0$ because $f^0=0$.
However, $|u^{1/j}|_{3/2}=1$, a contradiction.

 Let $u_\la^{1/j}$ be harmonic on $\Om^{1/j}$ such that
  $v_{j}^\la=u^{1/j}_\la\circ\Gaa^{1/j}(z)=\gaa^\la(z)-\Gaa^{1/j}(z)$.
We have $|u_\la^{1/j}|_{3/2}\leq \|T_j\|\cdot|\gaa^\la-\Gaa^{1/j}|_{3/2}\to0$ as
 $j\to\infty$ and $\la\to0$.
Hence $v_j^\la+\Gaa^{1/j}$ approach to $\Gaa^0$ in $\cL C^{3/2}$ norms as $\la$
and $1/j$ tend to zero.
Fix  a  $j$
such that $v_j^\la+\Gaa^{1/j}$ are embeddings for all $|\la-\la_0|$ sufficiently small.
Then $\Gaa_0^\la
=v_j^\la+\Gaa^{1/j}$ are extensions of $\gaa^\la$. Finally, $\Gaa_0^\la(z)$ is a real
analytic function on $\ov\Om\times I$
by the analyticity of solutions of Dirichlet problem with parameter.
\end{proof}

We now introduce spaces for exterior domains. Let $\Om'=\cc\setminus\ov\Om$.
Without loss of generality, we assume that $\Om$ be bounded and simply connected.
 Motivated
by the definition that
 a function $h(z)$ is harmonic at
 $\infty$ if $h(1/z)$ is harmonic at the origin, we define   inversions
\eq{invs}\iota_a(z)=\f{1}{z-a}+a, \quad
 \Om_a=\{a\}\cup\iota_a\Om', \quad
 \Om_b^\la=\{b_\la\}\cup \iota_{b_\la}(\Om^\la)'\eeq
 for    $a\in\Om$ and  $b_\la\in\Om^\la$.
 For a family of embeddings $\Gaa^\la$ from    $\ov{\Om'} $ onto $\ov{(\Om^\la)'}$,
 define
 \eq{invs+}
 \Gaa^\la_{b}=\iota_{b_\la}
 \circ\Gaa^\la,\quad
 \Gaa_{a,b}^\la=\iota_{b_\la}
 \circ\Gaa^\la\circ \iota_a,\quad
 \gaa^\la_{b}=\iota_{b_\la}
 \circ\gaa^\la,\quad
 \gaa_{a,b}^\la=\iota_{b_\la}
 \circ\gaa^\la\circ\iota_a.
 \eeq
Set $\Gaa_{a,b}^\la(a)=b_\la$. Then $\Gaa^\la_{a,b}$ is a fractional linear map
from $\Om_a$ onto $\Om_b^\la$.

 We denote
  $f\in\cL C^{k+\all}(\ov{\Om'})$   if $f\circ\iota_a$,
which is not defined at $a$,
 extends to an element in $\cL C^{k+\all}(\ov{\Om_a})$. Denote
$f=\{f^\la\}\in \cL C^{k+\all,j}(\ov{\Om'})$ (resp. $\cL B^{k+\beta,j}({\ov{\Om'}})$), if $f\circ\iota_a$
 extends to an element in $\cL C^{k+\all,j}(\ov{\Om_a})$ (resp. $\cL B^{k+\beta,j}({\ov{\Om_a}})$).
We emphasis that  as in \re{invs}-\re{invs+} we require  $a\in\Om$.
 The extended functions are still denoted by $f\circ\iota_a$. It is easy to verify
  that the definitions are independent of the choices of $a$.
Let $\Gaa^\la$ be a family
 of  $\cL C^1$  embeddings from $\ov{\Om'} $ onto $\ov{(\Om^\la)'}$.
  Denote
$f\in\cL C^{k+\all,j}(\ov{\Om_\Gaa'})$ (resp. $\cL B^{k+\all,j}({\ov{\Om_\Gaa'}})$),
 if $\{f^\la\circ\Gaa^\la \}\in\cL C^{k+\all,j}(\ov{\Om'})$
 (resp. $\cL B^{k+\beta,j}({\ov{\Om'}})$). The spaces for functions on boundaries of
 exterior domains
 will be the same as those for boundaries of interior domains.

To use the spaces $\cL C^{k+\all,j}(\ov{\Om_\Gaa'})$ and $\cL B^{k+\all,j}({\ov{\Om_\Gaa'}})$,
we will need   good control  of embeddings $\Gaa^\la$ at infinity.
Suppose that $b_\la$ and $d_\la$ are in $\Om^\la$ and $a,c$ are in $\Om$.
It is obvious  that   $\Gaa_{a,b}^\la=\iota_{b_\la}
 \circ\Gaa^\la\circ\iota_a$
 extends to a $\cL C^1$ embedding from $\ov{\Om_a}$ onto $\ov{\Om^\la_b}$ if and only if
 $\Gaa_{c,d}^\la$
 extends to a $\cL C^1$ embedding from $\ov{\Om'}$ onto $\ov{\Om^\la_d}$ for any
 $c\in\Om$ and $d_\la\in\Om^\la$.
 By $\{b_\la\}\in\cL C^j([0,1])$, we   mean
that $\la\to b_\la$ is of class $\cL C^j([0,1])$. Then, $\Gaa_{a,b}\in\cL C^{k+\all,j}(\ov{\Om_a})$
 if and only if $\Gaa_{c,d}\in\cL C^{k+\all,j}(\ov{\Om_d})$, provided $b$ and $d$ are in $\cL C^{j}([0,1])$.

To put the above definitions  in context, we restate \rl{chainp}\, (iii)
as follows:   The space
$\cL B^{k+\beta,j}(\ov{\Om_\Gaa'})$,
which is obviously dependent  of $\{(\Om^\la)'\}$ and $\Om'$,  is   independent
of embeddings $\Gaa^\la$ from $\ov{\Om'}$ onto $\ov{(\Om^\la)'}$, provided there
exists $\{b_\la\}\in\cL C^j([0,1])$
such that $\Gaa_{a,b}^\la$ extend to $\cL C^1$ embeddings from $\ov{\Om_a}$ onto
$\ov{\Om_b^\la}$ for
some $a\in\Om$ and
$\Gaa_{a,b}\in\cL B^{k+\all,j}(\ov{\Om_a})\cap\cL C^{1,0}(\ov{\Om_a})$.
Finally,
 we always assume that $\gaa^\la$ are the restrictions of $\Gaa^\la$ on $\pd\Om$, which preserve
 orientation.

\begin{prop}\label{c0pa+} Let $k\geq j$ and $k+1\geq l\geq0$. Let $\Om$ be a bounded and
simply connected domain with   $\pd\Om\in\cL C^{k+1+\all}$.
Let $\Gamma^{\lambda}$ map $\ov{\Om'}$ onto $\ov{(\Om^\la)'}$ for $0\leq\la\leq1$.
Let $b_\la\in\Om^\la$ satisfy $\{b_\la\}\in\cL C^j([0,1])$ and let $a\in\Om$. Suppose that
$\Gaa_{a,b}^\la$ extend to $\cL C^1$ embeddings
from $\ov{\Om_a}$ onto $\ov{\Om_b^\la}$ with $\Gaa_{a,b}\in\cL C^{1,0}(\ov{\Om_a})$.
\bppp
\item If $\Gaa_{b}\in\cL B^{l+\all,j}(\ov{\Om'})$ and
$f\in\cL B^{l+\all,j}(\pd\Om_\gaa)$,
then $\{ \cL C_-^\la f \}\in\cL B^{l+\all,j}(\ov{\Om_\Gaa'})$. The analogous assertion
 holds if $\cL C^{l+\all,j}$ substitutes for
$\cL B^{l+\all,j}$.
\item
If $\pd\Om\in\cL C^\om$, $\Gamma_{a,b}
\in\cL C^\om(\ov{\Om_a}\times[0,1])$ and $\{f\circ\Gaa^\la\circ\iota_a\}\in
\cL C^\om(\pd{\Om_a}\times[0,1])$, then
$\{{\cL C}_-^\la f\circ\Gaa^\la\circ\iota_a\}\in\cL C^\om(\ov{\Om_a}\times
[0,1])$.
\eppp
\end{prop}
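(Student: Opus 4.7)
The plan is to reduce this exterior statement to the interior version \rpa{c0pa} via the inversion $\iota_{b_\la}(z)=1/(z-b_\la)+b_\la$. Write $\tilde f^\la=f^\la\circ\iota_{b_\la}$ on $\pd\Om_b^\la$, and let $\tilde{\cL C}_+^\la$ denote the Cauchy transform of the bounded domain $\Om_b^\la$ (with the standard orientation). Substituting $\zeta=\iota_{b_\la}(\eta)$ in $\cL C^\la f(z)=\f{1}{2\pi i}\int_{\pd\Om^\la}\f{f^\la(\zeta)}{\zeta-z}\, d\zeta$ and using the partial-fraction identity
\gan
\f{1}{(\eta-b_\la)(w-\eta)}=\f{1}{w-b_\la}\Bigl(\f{1}{\eta-b_\la}+\f{1}{w-\eta}\Bigr)
\egan
with $w=\iota_{b_\la}(z)$, together with the fact that $\iota_{b_\la}$ reverses orientation, one arrives at the key identity, for $z\in\ov{(\Om^\la)'}$,
\gan
\cL C_-^\la f(z)=\tilde{\cL C}_+^\la\tilde f^\la(b_\la)-\tilde{\cL C}_+^\la\tilde f^\la(\iota_{b_\la}(z)),
\egan
which vanishes at $z=\infty$ (where $\iota_{b_\la}(z)=b_\la$), as expected.

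Composing with $\Gaa^\la\circ\iota_a$ and using $\iota_{b_\la}\circ\Gaa^\la\circ\iota_a=\Gaa_{a,b}^\la$ yields, for $z\in\ov{\Om_a}$,
\gan
(\cL C_-^\la f)\circ\Gaa^\la\circ\iota_a(z)=\tilde{\cL C}_+^\la\tilde f^\la(b_\la)-(\tilde{\cL C}_+^\la\tilde f^\la)\circ\Gaa_{a,b}^\la(z).
\egan
The regularity hypotheses transfer to the interior setting: since $\iota_a\colon\pd\Om_a\to\pd\Om$ is a fixed real-analytic diffeomorphism, \rla{chainp} gives $\tilde f^\la\circ\gaa_{a,b}^\la=f^\la\circ\gaa^\la\circ\iota_a\in\cL B^{l+\all,j}(\pd\Om_a)$; and by the very definition of the exterior space, $\Gaa_b\in\cL B^{l+\all,j}(\ov{\Om'})$ is equivalent to $\Gaa_{a,b}\in\cL B^{l+\all,j}(\ov{\Om_a})$. \rpa{c0pa} applied to the bounded domain $\Om_a$ with the embeddings $\Gaa_{a,b}^\la\colon\ov{\Om_a}\to\ov{\Om_b^\la}$ then delivers $(\tilde{\cL C}_+^\la\tilde f^\la)\circ\Gaa_{a,b}^\la\in\cL B^{l+\all,j}(\ov{\Om_a})$, and the $\cL C^{l+\all,j}$ assertion in (i) follows by the same mechanism.

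The constant-in-$z$ term $\tilde{\cL C}_+^\la\tilde f^\la(b_\la)$ is handled by the observation that $b_\la=\Gaa_{a,b}^\la(a)$, so it equals the value at the fixed point $z=a$ of the function $(\tilde{\cL C}_+^\la\tilde f^\la)\circ\Gaa_{a,b}^\la$ already known to lie in $\cL B^{l+\all,j}(\ov{\Om_a})$; as a $\la$-family of constant functions it therefore trivially belongs to that space as well. Combining the two terms proves (i). For (ii), the same identity applies, and the real-analytic conclusion of \rpa{c0pa} gives real analyticity of $(\tilde{\cL C}_+^\la\tilde f^\la)\circ\Gaa_{a,b}^\la$ on $\ov{\Om_a}\times[0,1]$; adding the parameter-analytic constant term $\tilde{\cL C}_+^\la\tilde f^\la\circ\Gaa_{a,b}^\la(a)$ yields the conclusion. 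The main care in the argument is the orientation/sign bookkeeping in the inversion substitution and verifying that the moving evaluation point $b_\la$ does not spoil regularity, which is precisely what the identification $b_\la=\Gaa_{a,b}^\la(a)$ with a fixed-point evaluation accomplishes.
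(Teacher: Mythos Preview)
Your proposal is correct and follows the same strategy as the paper: pull the exterior Cauchy transform back to the bounded domain $\Om_b^\la$ via $\iota_{b_\la}$, then invoke \rpa{c0pa} with the embeddings $\Gaa_{a,b}^\la\colon\ov{\Om_a}\to\ov{\Om_b^\la}$. The only (cosmetic) difference is algebraic: the paper stops one step before your partial-fraction split and writes
\[
\cL C_-^\la f(z^\la)=-\,(z_\la-b_\la)\cdot\tilde{\cL C}_+^\la\bigl[(\cdot-b_\la)^{-1}\tilde f^\la\bigr](z_\la),
\]
i.e.\ a smooth prefactor $\Gaa_{a,b}^\la(z)-b_\la$ times an interior Cauchy transform of the modified density $(\zeta_\la-b_\la)^{-1}\tilde f^\la(\zeta_\la)$, and then applies \rpa{c0pa} directly to that product. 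Your version instead splits into $\tilde{\cL C}_+^\la\tilde f^\la(b_\la)-\tilde{\cL C}_+^\la\tilde f^\la(\iota_{b_\la}(z))$ and handles the constant term by the neat observation $b_\la=\Gaa_{a,b}^\la(a)$. Either form feeds into \rpa{c0pa} the same way; the paper's variant avoids the separate constant-term argument at the cost of carrying the harmless factor $(\zeta_\la-b_\la)^{-1}$ in the boundary data.
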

\begin{proof} By our definition of orientations of boundaries,
$\iota_a$  reverses the orientations of $\pd\Om$ and $\pd\Om_a$ for $a\in\Om$.
Let $b_\la\in\Om^\la$,
$z^\la\in \cc\setminus
\ov{\Om^\la}$, and $z_\la\in\Om_b^\la$. Applying the inversion $\iota_{b_\la}$ to replace
$\zeta^\la-b_\la$ by $(\zeta_\la-b_\la)^{-1}$, we get
\gan
\cL C_-^\la f(z^\la)=-\f{z_\la-b_\la}{2\pi i}\int_{\pd\Om_b^\la}
\f{(\zeta_\la-b_\la)^{-1}f^\la(\iota_{b_\la}(\zeta_\la))}{\zeta_\la-z_\la}\, d\zeta_\la,\\
(\cL C_-^\la f)\circ\Gaa^\la\circ\iota_a(z)=-\f{\Gaa_{a,b}^\la(z)-b_\la}{2\pi i}
\int_{\pd\Om_a}\f{(\Gaa_{a,b}^\la(\zeta)-b_\la)^{-1}f^\la
\circ\Gaa^\la\circ\iota_a(\zeta)}
{\Gaa_{a,b}^\la(\zeta)-\Gaa_{a,b}^\la(z)}\, d\Gaa_{a,b}^\la(\zeta).
\end{gather*}
We know that $\{f^\la\circ\iota_b\circ\Gaa^\la_{a,b}\}=\{f^\la\circ\gaa^\la\circ\iota_a\}$
is in $\cL B^{k+\all,j}(\pd\Om_a)$ and $\cL C^{k+\all,j}(\pd\Om_a)$,
when $f$ is in $\cL B^{k+\all,j}(\pd\Om_\gaa)$
and $\cL C^{k+\all,j}(\pd\Om_\gaa)$, respectively. The lemma  follows from \rp{c0pa}.
\end{proof}

\begin{prop}\label{c0paU+++} Keep assumptions in \rpa{c0pa+}. Let $f\in\cL C_*^{0,j}(\pd\Om_{\gaa})$.
\bppp
%\item Assume that $\int_{\pd\Om^\la}f^\la\,d\sigma^\la=0$.
%If  $\Gaa_{b}\in\cL C_*^{1,j}(\ov{\Om'}_\Gaa)$, then $W_- f
%\in\cL C_*^{0,j}(\ov{\Om'}_{\Gaa})$. Assume   that $\pd\Om\in\cL C^{k+1+\all}$,
% $\Gaa_b\in\cL B^{k+1+\all,j}(\ov{\Om'}_\Gaa)$ $($resp. $\cL C^{k+1+\all,j}(\ov{\Om'}_\Gaa))$
%and $f\in\cL B^{k+\all,j}(\pd\Om_\gaa)$ $($resp. $\cL C^{k+\all,j}(\ov{\Om'}_\Gaa))$.
%Then $W_-f\in\cL B^{k+1+\all,j}(\ov{\Om'}_\Gaa)$ $($resp. $\cL C^{k+1+\all,j}(\ov{\Om'}_\Gaa))$.
%Assume further that $\pd\Om\in\cL C^\om$, $\{b_\la\}\in\cL C^\om$, $\Gaa_{a,b}
%\in\cL C^\om(\ov{\Om_a}\times[0,1])$ and $\{f^\la\circ\gaa^\la\}\in\cL C^\om(\pd{\Om}\times[0,1])$.
%Then
%$\{(W_-^\la f)\circ\Gaa^\la\circ\iota_a\}\in\cL C^\om(\ov{\Om_a}\times[0,1])$.
\item Assume that $\int_{\pd\Om^\la}f^\la\,d\sigma^\la=0$.
If  $\Gaa_{b}\in\cL C_*^{1,j}(\ov{\Om'}_\Gaa)$, then $W_- f
\in\cL C_*^{0,j}(\ov{\Om'}_{\Gaa})$. Assume   that $\pd\Om\in\cL C^{k+1+\all}$,
 $\Gaa_b\in\cL B^{k+1+\all,j}(\ov{\Om'}_\Gaa)$
and $f\in\cL B^{k+\all,j}(\pd\Om_\gaa)$.
Then $W_-f\in\cL B^{k+1+\all,j}(\ov{\Om'}_\Gaa)$. The analogous assertion holds
if $\cL C$ substitutes for $\cL B$.
Assume further that $\pd\Om\in\cL C^\om$, $\{b_\la\}\in\cL C^\om$, $\Gaa_{a,b}
\in\cL C^\om(\ov{\Om_a}\times[0,1])$ and $\{f^\la\circ\gaa^\la\}\in\cL C^\om(\pd{\Om}\times[0,1])$.
Then
$\{(W_-^\la f)\circ\Gaa^\la\circ\iota_a\}\in\cL C^\om(\ov{\Om_a}\times[0,1])$.
\item
 If $\Gaa_{b}\in\cL B_*^{1+\all,j}(\ov{\Om'}_\Gaa)$, then $U_- f
\in\cL C_*^{0,j}(\ov{\Om'}_\Gaa)$.
\eppp
\end{prop}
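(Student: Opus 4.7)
The strategy is to reduce to the interior case via the M\"obius inversion $\iota_{b_\la}$, which maps $\ov{(\Om^\la)'}\cup\{\infty\}$ conformally onto $\ov{\Om_b^\la}$ sending $\infty\mapsto b_\la$, and then to invoke the interior result \rpa{c0paU} on the bounded domain $\Om_b^\la$. Fix $a\in\Om$. For $z^\la\in\ov{(\Om^\la)'}$ and $\zeta^\la\in\pd\Om^\la$, write $z_\la=\iota_{b_\la}(z^\la)\in\ov{\Om_b^\la}$ and $\zeta_\la=\iota_{b_\la}(\zeta^\la)\in\pd\Om_b^\la$. The factorization
\[
z^\la-\zeta^\la=-\f{z_\la-\zeta_\la}{(z_\la-b_\la)(\zeta_\la-b_\la)}
\]
gives $\log|z^\la-\zeta^\la|=\log|z_\la-\zeta_\la|-\log|z_\la-b_\la|-\log|\zeta_\la-b_\la|$ and $d\sigma^\la(\zeta^\la)=|\zeta_\la-b_\la|^{-2}d\sigma_\la(\zeta_\la)$, while a partial-fraction decomposition of $d\zeta^\la/(z^\la-\zeta^\la)$ combined with $d\arg(w)=\IM(dw/w)$ produces the 1-form identity $d_{\zeta^\la}\arg(z^\la-\zeta^\la)=d_{\zeta_\la}\arg(z_\la-\zeta_\la)-d_{\zeta_\la}\arg(\zeta_\la-b_\la)$. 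Because $\iota_{b_\la}$ swaps interior and exterior, $\pd\Om^\la$ with standard orientation corresponds to $\pd\Om_b^\la$ with the \emph{reversed} standard orientation, contributing a global sign for integrals of oriented 1-forms but not for the positive measure $d\sigma$.

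Combining these ingredients yields, for $z^\la\in\ov{(\Om^\la)'}$,
\begin{align*}
W^\la f(z^\la)&=W_{b,+}^\la\tilde f^\la(z_\la)-\f{\log|z_\la-b_\la|}{\pi}\int_{\pd\Om^\la}f^\la\,d\sigma^\la-c_W^\la,\\
U^\la f(z^\la)&=-U_{b,+}^\la f_*^\la(z_\la)+c_U^\la,
\end{align*}
where $\tilde f^\la(\zeta_\la)=f^\la(\iota_{b_\la}(\zeta_\la))/|\zeta_\la-b_\la|^2$, $f_*^\la(\zeta_\la)=f^\la(\iota_{b_\la}(\zeta_\la))$, and $c_W^\la,c_U^\la$ depend on $\la$ but not on $z$. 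The hypothesis $\int_{\pd\Om^\la}f^\la\,d\sigma^\la=0$ in (i) eliminates the $\log|z_\la-b_\la|$ term. Composing with $\Gaa^\la\circ\iota_a$ and using $\Gaa_{a,b}^\la=\iota_{b_\la}\circ\Gaa^\la\circ\iota_a$ gives
\[
(W_-^\la f)\circ\Gaa^\la\circ\iota_a=W_{b,+}^\la\tilde f^\la\circ\Gaa_{a,b}^\la-c_W^\la
\]
on $\ov{\Om_a}$, and the analogous formula for $U_-^\la f$.

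For part (i), the hypothesis $\Gaa_b\in\cL C_*^{1,j}(\ov{\Om'}_\Gaa)$ is, by definition, equivalent to $\Gaa_{a,b}\in\cL C_*^{1,j}(\ov{\Om_a})$. Since $b_\la\in\Om^\la$ remains bounded away from $\pd\Om_b^\la$ and $\{b_\la\}\in\cL C^j([0,1])$, the multiplier $|\gaa_{a,b}^\la-b_\la|^{-2}$ lies in $\cL C_*^{0,j}(\pd\Om_a)$; combined with $(f\circ\gaa)\circ\iota_a\in\cL C_*^{0,j}(\pd\Om_a)$, this shows $\tilde f^\la\circ\gaa_{a,b}^\la\in\cL C_*^{0,j}(\pd\Om_a)$. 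Applying the interior \rpa{c0paU} to $W_{b,+}^\la\tilde f^\la$ on the bounded domain $\Om_b^\la$ (with embedding family $\Gaa_{a,b}^\la$) yields $W_{b,+}^\la\tilde f^\la\circ\Gaa_{a,b}^\la\in\cL C_*^{0,j}(\ov{\Om_a})$, and hence $W_-f\in\cL C_*^{0,j}(\ov{\Om'}_\Gaa)$. The $\cL B^{k+1+\all,j}$ and $\cL C^{k+1+\all,j}$ assertions follow by exactly the same reduction from the higher-regularity parts of \rpa{c0paU}. For real analyticity, $\iota_a$, $\iota_{b_\la}$ and $|\cdot-b_\la|^{-2}$ are real analytic in $(z,\la)$ under the hypotheses, so the real analytic part of \rpa{c0paU} applied to $W_{b,+}\tilde f$ yields the conclusion (and $c_W^\la$ is itself real analytic, as an integral of a real analytic integrand). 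Part (ii) is proved identically from the $U$-formula: $\Gaa_b\in\cL B_*^{1+\all,j}$ translates to $\Gaa_{a,b}\in\cL B_*^{1+\all,j}(\ov{\Om_a})$, and the $U_{b,+}$ half of the interior \rpa{c0paU} delivers $U_-f\in\cL C_*^{0,j}(\ov{\Om'}_\Gaa)$.

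The main obstacle is bookkeeping: correctly tracking the orientation sign produced by $\iota_{b_\la}$ (which affects the oriented $U$-formula but not the $W$-formula using the positive measure $d\sigma$), and verifying that the change-of-variable factors $|\gaa_{a,b}^\la-b_\la|^{-2}$ and the constant terms $c_W^\la$, $c_U^\la$ lie in the appropriate parameter-dependent H\"older classes. Both reduce to the elementary fact that $b_\la\in\Om^\la$ is separated from $\pd\Om_b^\la$, together with the assumed regularity of $\Gaa_b$ and $\{b_\la\}$.
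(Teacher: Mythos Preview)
Your proposal is correct and follows essentially the same route as the paper: both reduce to the interior case via the inversion $\iota_{b_\la}$, derive the identities $W_-^\la f(z^\la)=\pm W_{b,+}\tilde f(z_\la)+\text{const}(\la)$ and $U_-^\la f(z^\la)=-U_{b,+}f_*(z_\la)+\text{const}(\la)$ (the sign discrepancy on $W$ is purely a convention on whether $d\sigma^\la$ is treated as a positive measure or as the oriented $1$-form $\ov\tau\,d\zeta$), and then invoke \rpa{c0paU}. One minor point: the real analytic assertion for $W_+$ is stated in \rpa{c0pa} rather than in \rpa{c0paU}, so your citation for that step should point there.
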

\begin{proof}  Let $A$ be an orientation-preserving map from $\pd\hat\Om$
onto $\pd\Om$. Let $\hat\gaa(t)$ be a parameterization of $\pd\hat\Om$. Then
$\gaa(t)=A(\hat\gaa(t))$ is
a parameterization of $\pd\Om$. Assume that $dt$ agrees with the orientation
 of $\pd\Om$ and $A$ extends to a $\cL C^1$ map defined near $\pd\Om$. We have
\aln
d\sigma&=|\pd_tA(\hat\gaa(t))|\, dt
=|\pd_zA+\hat\gaa'(t)^{-1}\ov{\hat\gaa'(t)}\pdoz A|\, d\hat\sigma.
\end{align*}
Let $d\sigma_{b}^{\la}$ be the arc-length element on $\pd\Om_b^\la$.
Since $\iota_{b_\la}\colon z^\la\to z_\la$ reverses the orientations of $\pd\Om^\la$ and
 $\pd\Om_b^\la$, we obtain
\eq{ds2r}
d\sigma^\la=-\f{d\sigma_b^\la }{|\zeta_\la-b_\la|^2}
\eeq
on $\pd\Om_b^\la$ or   $\pd\Om^\la$ (via pull-back or push-forward). By \re{Ufz+}, a simple computation yields
\aln
 W_-^\la f(z^\la)&=\f{1}{\pi}\int_{\pd\Om^\la} f^\la(\zeta^\la)
\log|\zeta_\la-z_\la|\, d\sigma^\la\\
&\quad -\f{1}{\pi}\int_{\pd\Om^\la} f^\la(\zeta^\la)
\log|(\zeta_\la-b_\la)(z_\la-b_\la)|\, d\sigma^\la, \quad z_\la\neq b_\la.
\end{align*}
Since $\int_{\pd\Om^\la}f^\la\,d\sigma^\la=0$, we can remove $(z_\la-b_\la)$ and the restriction
$z_\la\neq b_\la$
from the last integral.
By \re{ds2r}, we get
\aln
W_-^\la f(z^\la)&=-\f{1}{\pi}\int_{\pd\Om_b^\la} f^\la(\iota_{b_\la}(\zeta_\la))
 \f{1}{|\zeta_\la-b_\la|^2} \log|\zeta_\la-z_\la|
 \,d\sigma_{b}^{\la}\\
&\quad +\f{1}{\pi}\int_{\pd\Om_b^\la} f^\la(\iota_{b_\la}(\zeta_\la))
  \f{1}{|\zeta_\la-b_\la|^2} \log|\zeta_\la-b_\la|
 \,d\sigma_{b}^{\la},\quad
z_\la\in\Om_b^\la.
\end{align*}
 Let
  $\ta_{b,\zeta}^\la$ be the unit tangent vector of $\pd\Om^\la_b$ at $\zeta_\la$. Fixing $z\in\Om$, we have $d\arg(z-\zeta^\la)=(\pd_{\ta^\la_{b,\zeta}}\arg(\zeta_\la-z_\la)-\pd_{\ta^\la_{b,\zeta}}
  \arg(\zeta_\la-b_\la))
\, d\sigma_b^\la(\zeta_\la)$ and
\aln
U_-^\la f(z^\la)&=-\f{1}{\pi}\int_{\pd\Om_b^\la}f^\la(\iota_{b_\la}(\zeta_\la))
\pd_{\tau_{b,\zeta}^\la}\arg(\zeta_\la-z_\la)\, d\sigma_b^\la(\zeta_\la)\\
&\quad+\f{1}{\pi}\int_{\pd\Om_b^\la}f^\la(\iota_{b_\la}(\zeta_\la))
\pd_{\tau_{b,\zeta}^\la}\arg(\zeta_\la-b_\la)\, d\sigma_b^\la(\zeta_\la),\quad z_\la\in\Om_b^\la.
\end{align*}
The  assertions follow from
\rp{c0paU} and the last two formulae.
\end{proof}

\setcounter{thm}{0}\setcounter{equation}{0}
\section{Main results and proofs}
\label{sec9}
In this section, we   first prove the real analyticity of solutions to real analytic
integral equations arising from the Dirichlet and Neumann problems. We   then
  collect results from previous sections to formulate the solutions
of Dirichlet and Neumann problems with parameter. Finally, we     prove \rt{nrefl}.
\pr{kl3an} Let $\Om$ be a bounded domain with $\pd\Om\in\cL C^\om$.
 Let $\gaa^\la$ embed  $\pd\Om$
onto $\pd\Om^\lambda$ with $\gaa\in\cL C^\om(\pd\Om\times[0,1])$.
Let $\cL L$ be one of $K,-K,K^*$, and $-K^*$. Let $\psi\in\cL C^\om(\pd\Om\times[0,1])$.
Suppose that $\var^\la\in L^1(\pd\Om^\la)$ satisfy
\eq{vlllan}
\var^\la+\cL L^{\la}\var^{\la}=\psi^\la, \quad\{\jq{\var^\la,\ell_j^\la}\}\in\cL C^\om,
\quad  1\leq j\leq n.
\eeq
Then $\var\in\cL C^\om(\pd\Om\times[0,1])$. Furthermore, the functions $\phi_0,\ldots,\phi_m$
in \rpa{kern}
are   in $\cL C^\om(\pd\Om\times[0,1])$.
\epr
\begin{proof}  We already know that $\var,\phi_i$ are of class $\cL C^\infty$.
We apply Cauchy majorant methods to estimate the growth of their Taylor coefficients.
By Taylor's theorem,
 a function $f$ on $\pd\Om\times[0,1]$ is real analytic if and
 only if $$
 \max_{t,\la}\bigl|\pd_t^i\pd_\la^jf(\hat\gaa(t),\la)\bigr|\leq Ci!j!\rho^{i+j},
 $$
 where $\hat\gaa$ is a real analytic parameterization of $\pd\Om$ and $C,\rho$ are constants.
We first need uniform  bounds for solutions operators in sup-norms.
Let $\{\ell_1^\la,\ell_2^\la,\ldots,\ell_n^\la\}$ be the basis of $\ker (I+\cL L^\la)$ described
after the proof of \rp{kern}.
By \rl{kl} (i), we know that   $\ell_1,\ldots,\ell_n$ are in $\cL C^{0,0}(\pd\Om_\gaa)$.
Then $\cL L_0$ sends $\cL C^{0,0}(\pd\Om_\gaa)$ into $(\cL C^0([0,1]))^n$, where
$$ %\eq{lovj}
\cL L_0^\la\var=(\jq{\var^\la,\ell_1^\la},\jq{\var^\la,\ell_2^\la},\ldots,\jq{\var^\la,\ell_n^\la}).
$$ %\eeq
Consider  bounded linear maps
\gan
(I+\cL L,\cL L_0)\colon \cL C^{0,0}(\pd\Om_\gaa)\to
(C^{0,0}(\pd\Om_\gaa)\cap\ker (I+\cL L^*)^\perp)\times(\cL C^0([0,1]))^n=X_{\cL L};\\
(I+\cL L^*,\cL L_0)\colon\cL C^{0,0}(\pd\Om_\gaa)\to X_{\cL L^*},\quad
\text{$\cL L=\cL K$ or $-\cL K$}.
\end{gather*}
It is clear that $(I+\cL L,\cL L_0)$ is injective. By \rp{kern} (iii),
the second map is injective too for both cases.
 By \rp{kern} (i) and \rl{kl} (i), $I+\cL L$
maps $\cL C^{0,0}(\pd\Om_\gaa)$ onto $\cL C^{0,0}\cap(\ker(I+\cL
L^*))^\perp$. Since $\ell_1^\la,\ldots, \ell_n^\la$ are linearly
independent for each $\la$, then
$(\jq{\ell_i^\la,\ell_j^\la})_{1\leq i,j\leq n}$ are invertible.
Since $\ell_i$ are in $\cL C^{0,0}(\pd\Om_\gaa)$,   given
$c  \in(\cL C^{0}([0,1]))^n$ we can find $\tilde c\in(\cL C^{0}([0,1]))^n$
 such that $\jq{\sum_j
\tilde c_{j}(\la)\ell_j^\la,\ell_i^\la}=c_i(\la)$. This shows that $(I+\cL L,\cL
L_0)$ is surjective. That $(I+\cL L^*,\cL L_0)$ is surjective for
$\cL L=\cL K$ or $-\cL K$ follows  from
$\int_{\pd\Om^\la}e_i\phi_j^{\la}\, d\sigma^\la=\delta_{ij}$ for
$1\leq i,j\leq m$, and $\int_{\pd\Om^\la}e_0\phi_0^{\la}\,
d\sigma^\la=1$.  By the open mapping theorem, we have \ga \label{vcss}
|\var|_{0,0}\leq C_*(|(I+\cL L^*)\var|_{0,0}+|\cL L_0\var|_{0}), \quad \text{$\cL L=\cL K$
 or $-\cL K$};\\
\label{vssl}
|\var|_{0,0}\leq C_*(|(I+\cL L)\var|_{0,0}+|\cL L_0\var|_{0}), \quad \text{$\cL L=\cL K,
 -\cL K,\cL K^*$, or $-\cL K$}.
\end{gather}
Here
$\var$ are in $\cL C^{0,0}(\pd\Om_\gaa)$ and $C_*$
is independent of $\var$.

(i).  We first consider the case where $\cL L=\cL K$ or $-\cL K$.  We express \re{vlllan} as
\ga\label{vazl}
\var(z,\la)+\int_{\pd\Om}\var(\zeta,\la)L(z,\zeta,\la)\, d\sigma(\zeta)=\psi_0(z,\la),\\
\int_{\pd\Om}\var(\zeta,\la)\ell_\alpha^\la a(\zeta,\la)\, d\sigma(\zeta)=\psi_\all^\la,
\quad \alpha=1,\ldots, n.
\label{vaz1+}\end{gather}
Note that $L(z,\zeta,\la)$ is real analytic on $\pd\Om\times\pd\Om\times[0,1]$
and $a(\zeta,\la)=\pd_{\ta^\la}\gaa^\la(\zeta)$ is real analytic on $\pd\Om\times[0,1]$.
We know that $\ell_i$ are locally constants. However,
we want to reason in such a way that the proof is valid
whenever $\ell_i^\la(\zeta)$ are real analytic in $\la$ and $\zeta$. Thus, the proof applies to
  $\cL L=\cL K^*$ or $-\cL K^*$ after we prove (ii).
 Differentiating \re{vazl}-\re{vaz1+} yields
\al\label{vazl2}
\pd_\la^k\var(z,\la)&+\cL L^{\la}\pd_\la^k\var^\la=\pd_\la^k\psi_0(z,\la)\\
\nonumber &\hspace{5em}-\sum_{l=0}^{k-1}\binom{k}{l}
\int_{\pd\Om}\pd_\la^l\var(\zeta,\la)\pd_\la^{k-l}
L(z,\zeta,\la)\, d\sigma(\zeta),\\
\label{vazl2+}
\cL L_0\pd_\la^k\var^\la&=\pd_\la^k\psi (\la)
-\sum_{l=0}^{k-1}\binom{k}{l}\int_{\pd\Om}\pd_\la^l\var(\zeta,\la)  \pd_\la^{k-l}
(\ell ^\la(\zeta) a(\zeta,\la))\, d\sigma(\zeta).
\end{align}
 Set
$
A_{k}=\f{1}{k!}\max_{\zeta,\la}|\pd_\la^k\var(\zeta,\la)|$ and
$$
 a_k=\f{1}{k!}\max_{\zeta,z,\la,\alpha}\Bigl\{|\pd_\la^kL(\zeta,z,\la)|,
|\pd_\la^k\psi_0(\zeta, \la)|,|\pd_\la^k\psi_\all(\la)|,
|\pd_\la^k(\ell_\all^\la a(\zeta, \la))|\Bigr\}.
$$
We have $|\ell_\all|_{L^2}^{-2}|\ell_\all|\leq C_1$. Denote by $|\pd\Om|$ the length of $\pd\Om$.
 Then we obtain from \re{vazl2+}, \re{vssl} and \re{vazl2}
\ga
\nonumber
\f{1}{k!}|\cL L_0^\la\pd_\la^k\var^\la|\leq  a_k+|\pd\Om|\sum_{l=0}^{k-1}A_la_{k-l},\quad
A_k\leq 2C_*\Bigl\{a_k+|\pd\Om|\sum_{l=0}^{k-1}  A_la_{k-l}\Bigr\}.
\end{gather}
Denote $\sum A_I w^I\prec\sum b_Iw^I$ if $A_I\leq b_I$ for $|I|\geq0$. The above implies that
$$
\sum A_kw^k \prec 2C_*\sum a_kw^k+2C_*|\pd\Om| w\sum A_kw^k\sum
a_{k+1}w^k.
$$
Therefore, $\sum A_kw^k$   converges near the origin. Set
$
B_{kj}=\f{1}{k!j!}\max_{t,\la}|\pd_t^j\pd_\la^k\var(\hat\gaa(t),\la)|$ and
$$ b_{kj}=\f{1}{k!j!}
\max_{\zeta,t,\la}\Bigl(|\pd_t^j\pd_\la^kL(\hat\gaa(t),\zeta,\la)|,
|\pd_t^j\pd_\la^k\psi_0(\hat\gaa(t),\la)|\Bigr).
$$
Taking $\pd_t^j$ directly onto the real analytic
 kernel $\pd_\la^kL(\hat\gaa(t),\zeta,
\la)$ in \re{vazl2},
 we get
\gan
\pd_t^j\pd_\la^k\var(\hat\gaa(t),\la)=\pd_t^\la\pd_\la^k\psi_0(\hat\gaa(t),\la)
-\sum_{l=0}^{k}\binom{k}{l}
\int_{\pd\Om}\pd_\la^l\var(\zeta,\la)\pd_t^j\pd_\la^{k-l}
L(\hat\gaa(t),\zeta,\la)\, d\sigma(\zeta),
\\
B_{kj}\leq b_{kj}+|\pd\Om|\sum_{l=0}^k A_lb_{(k-l)j},\qquad k,j\geq0,\\
\sum B_{kj}w_1^kw_2^j\prec\sum b_{kj}w_1^kw_2^j + |\pd\Om|\sum
A_{k}w_1^k\sum b_{kj}w_1^kw_2^j.
\end{gather*}
Obviously,  $\sum B_{kj}t^j\la^k$   converges near $(t,\la)=0$.

(ii). We still consider $L=K$ or $-K$. The elements in the base $\{\phi_i^\la\}$
 of $\ker(I+\cL L^*)$ are not constant, so
we need to establish their analyticity first.
 Recall that
\gan
\phi_i^\la+\cL L^{\la*}
\phi_i^\la=0, \quad \int_{\pd\Om}\phi_i^\la\ell_j^\la a(\la,\zeta)\, d\sigma=\delta_{ij},
 \quad 1\leq i,j
\leq n.
\end{gather*}
We write   both in   row vectors and get
\ga
\label{dkin2+}
(I+\cL L^{\la*})\pd_\la^k\phi(z,\la)= -\sum_{l=0}^{k-1}\binom{k}{l}
\int_{\pd\Om}\pd_\la^l\phi(\zeta,\la)\pd_\la^{k-l}
L(\zeta,z,\la)\, d\sigma(\zeta),\\
\cL L_0^\la\pd_\la^k\phi^\la=\pd_\la^k (1,\ldots,1)
-\sum_{l=0}^{k-1}\binom{k}{l}\int_{\pd\Om}\pd_\la^l\phi(\zeta,\la)
 \pd_\la^{k-l}(\ell ^\la a(\zeta,\la))\, d\sigma(\zeta).
\nonumber\end{gather}
We   use \re{vcss} instead of \re{vssl} and get,
for
$A_{k}=\f{1}{k!}\max_{\zeta,1\leq\la\leq n}|\pd_\la^k\phi_\all(\zeta,\la)|$,
\gan
\f{1}{k!}|\cL L_0^\la\pd_\la^k\phi^\la|\leq  1+|\pd\Om|\sum_{l=0}^{k-1}A_la_{k-l},\quad
A_k\leq 2C_*\Bigl\{1+|\pd\Om|\sum_{l=0}^{k-1}  A_la_{k-l}\Bigr\}.
\end{gather*}
Therefore, $
\sum A_kw^k \prec 2C_*+2C_*|\pd\Om| w\sum A_kw^k\sum a_{k+1}w^k$ and  $
\sum A_kw^k$   converges near the origin. Next, we apply $\pd_t^j$ to \re{dkin2+} and get
$$
\pd_t^j\pd_\la^k\phi(\hat\gaa(t),\la)= -\sum_{l=0}^{k}\binom{k}{l}
\int_{\pd\Om}\pd_\la^l\phi(\zeta,\la)\pd_t^j\pd_\la^{k-l}
L(\zeta,\hat\gaa(t),\la)\, d\sigma(\zeta).
$$
As before, we obtain real analyticity of $\phi(\hat\gaa(t),\la)$.

With the real analyticity of $\phi_i$, the proof in (i) is valid for $\cL L=\cL K^*$ or
$-\cL K^*$.
\end{proof}
The   Dirichlet problem for exterior domains with parameter is
$$
 \Delta u^\la=0 \quad \text{on $(\Om^\lambda)'$}, \qquad
      u^\la=f^\la \quad \text{on $\pd\Om^\lambda$}.
$$
To ensure that the solutions are unique, we require that $u^\la$ be harmonic
at $\infty$, i.e., that $u^\la(1/z)$ is harmonic in a neighborhood of $0$.
The Neumann problem for exterior domains with parameter is
$$
 \Delta v^\la=0 \quad \text{on $(\Om^\lambda)'$}, \qquad
     \pd_{\nu^\la} v^\la=g^\la \quad \text{on $\pd\Om^\lambda$}.
$$
Here $\nu^\la$  is
the unit outer normal vector of $\pd\Om^\la$. Again, we require that $v^\la$
be harmonic at $\infty$. For the existence and uniqueness
 of solutions $v^\la$, we impose conditions
\eq{npb++}
\int_{\gaa_i^\la}g^\la\, d\sigma^\la=0,\qquad\int_{\gaa_i^\la}v^\la\, d\sigma^\la=0,
\quad 0\leq i\leq m.
\eeq
The $v^\la$ which satisfy conditions \re{npb+} or \re{npb++} are called normalized solutions.
By Hopf's lemma if $u$ is harmonic on $\Om$
and continuous up to the boundary with $\pd\Om\in\cL C^{1+\all}$,
then $\pd_\nu u$ determines $u$ up to a constant. In fact, one can locally reduce to
the case where $\Om$ is a unit disc by Kellogg's theorem; see also~\ci{Miseze}, p.~7.
Thus, the normalized solutions are
unique.

We now summarize the solutions to the Dirichlet and Neumann problems as follows.
Recall that function spaces for interior domains are defined in section~\ref{sec2}
and   function spaces for exterior domains are defined in section~\ref{sec8}.
The reader is referred to \rl{chainp} for independence of spaces
$\cL B^{k+\beta,j}(\pd\Om_\gaa)$ and $\cL B^{k+\beta,j}(\pd\Om_\Gaa)$
on $\gaa$ and $\Gaa$ for $k\geq j$, respectively. Recall that \rl{spac} shows
 the existence of extensions of   $\gaa^\la$
to   $\Gaa^\la$.
\th{dnpb} Let $0\leq j\leq k$, $0<\all<1$, and $j\leq l\leq k+1$.
 Let $\Om$ be a connected bounded domain in $\cc$ with $\pd \Om\in\cL C^{k+1+\all}$.
Let $\Gaa^\la$ embed  $\ov\Om$ onto $\overline{\Omega^\lambda}$
with $\Gaa\in\cL B^{k+1+\all,j}(\ov\Om)$ for   interior Dirichlet and Neumann problems.
Let $\Gaa^\la$ embed $\ov{\Om'}$ onto $\ov{(\Om^\la)'}$ such that $\iota_{b^\la}
\circ\Gaa^\la\circ\iota_a$
extends to $\cL C^1$ embeddings from $\ov{\Om_a}$ onto $\ov{\Om_b^\la}$ with
$\Gaa_{b}\in\cL B^{k+1+\all,j}(\ov{\Om_\Gaa'})$ for
exterior Dirichlet and Neumann problems. Here $a\in\Om$, $b_\la\in\Om^\la$
and $\{b_\la\}\in\cL C^{j}([0,1])$. Let $\gaa^\la$ be
the restriction of $\Gaa^\la$ on $\pd\Om$.  Suppose that $\{f^\la\}\in\cL C_*^{0,j}
(\pd\Om_\gaa)$. \bppp
\item
{\bf (Interior Dirichlet problem.)}
There exists a unique harmonic function
$u^\la$ on $\Om^\la$ such
that $u\in\cL C_*^{0,j}(\ov \Om_\Gaa)$  and $u^\la=f^\la$
on $\pd\Om^\la$. Moreover,
\ga\label{idpf}
u^\la=U_+^\la{\var}+\sum_{i,j=1}^mc_i^\la \mu_{ij}^\la W^\la_+\phi_j,\\
\nonumber %\label{glfl}
\var^\la+\cL K^\la\var^\la=g^\la,\quad \var^\la\perp\ker(I+\cL K^\la),
\quad g^\la= f^\la-\sum_{j=1}^m c_i^\la e_i,\\
\nonumber %\label{mij}
(W^\la{\phi_i}|_{\gaa_j^\la})_{1\leq i,j\leq m}=(\mu_{ij}^\la)^{-1},
\quad c_i^\la=\int_{\pd\Om^\la} f^\la\phi_i^\la\, d\sigma^\la.
\end{gather}
\item{\bf (Exterior Dirichlet problem.)} Assume that $\Om^\la$ are simply connected.
There exists a  unique harmonic function
$u^\la$ on $(\Om^\la)'\cup\{\infty\}$ such
that  $u\in\cL C_*^{0,j}(\ov {\Om'_\Gaa})$
and  $u^\la=f^\la$
on $\pd\Om^\la$. Moreover,
\gan
u^\la=U_-^\la{\var}+ \int_{\pd\Om^\la} f^\la \phi_0^\la\, d\sigma^\la,\\
\var^\la-\cL K^\la\var^\la=g^\la,\quad \var^\la\perp\ker(I-\cL K^\la), \quad g^\la=
 f^\la- \int_{\pd\Om^\la}
f^\la\phi_0^\la \, d\sigma^\la.
\end{gather*}
\item
{\bf (Interior Neumann problem.)}
That  $\int_{\pd\Om^\la} f^\la\, d\sigma^\la=0$ are
the necessary and sufficient conditions for the existence of    functions $u^\la$ which
 are harmonic
on $(\Om^\la)'\cup\{\infty\}$ and satisfy
$u\in\cL C_*^{0,j}(\ov\Om_\Gaa)$ and $\pd_{\nu^\la}u^\la=f^\la$. The  normalized
 solutions are given by
$$
u^\la=W_+^\la{\var},\quad \var^\la-\cL K^{\la*}\var^\la=f^\la,\quad \var^\la\perp
\ker(I-\cL K^{\la*}).
$$
\item
{\bf (Exterior Neumann problem.)} Assume that $\Om^\la$ are simply connected.
That     $\int_{\gaa_j^\la}f^\la\, d\sigma^\la=0$
for all $j\geq0$ are
the necessary and sufficient conditions  for the existence of  functions $u^\la$ which are harmonic
on $(\Om^\la)'\cup\{\infty\}$ and satisfy $u\in\cL C_*^{0,j}(\ov{\Om'_\Gaa})$ and
 $\pd_{\nu^\la}u^\la=f^\la$.   The  normalized  solutions
are given by
$$
u^\la=W_-^\la{\var},\quad
\var^\la+\cL K^{\la*}\var^\la=f^\la,\quad \var^\la\perp\ker(I+\cL K^{\la*}).$$
\item {\bf (Regularity.)}
If $f\in\cL B^{l+\all,j}(\pd\Om_\gaa)$,  then $u\in\cL B^{l+\all,j}(\ov\Om_\Gaa)$
for {\rm($i$)} and $u\in\cL B^{l+\all,j}(\ov{\Om'}_{\Gaa})$
for {\rm($ii$)}; if $f\in\cL B^{k+\all,j}(\pd\Om_\gaa)$ then $u\in\cL B^{k+1+\all,j}
(\ov\Om_\Gaa)$ for {\rm($iii$)}
and $u\in\cL B^{k+1+\all,j}(\ov{\Om'}_{\Gaa})$ for {\rm($iv$)}.
Assume further that
$\Gaa\in\cL C^{k+1+\all,j}(\ov\Om)$ and $\Gaa_{b}\in\cL C^{k+1+\all,j}(\ov\Om)$.
If $f\in\cL C^{l+\all,j}(\pd\Om_\gaa)$,  then $u\in\cL C^{l+\all,j}(\ov\Om_\Gaa)$
for {\rm($i$)} and $u\in\cL C^{l+\all,j}(\ov{\Om'}_{\Gaa})$
for {\rm($ii$)}; if $f\in\cL C^{k+\all,j}(\pd\Om_\gaa)$ then $u\in\cL C^{k+1+\all,j}
(\ov\Om_\Gaa)$ for {\rm($iii$)}
and $u\in\cL C^{k+1+\all,j}(\ov{\Om'}_{\Gaa})$ for {\rm($iv$)}.    Assume further that
$\pd\Om\in\cL C^\om$,  $\Gaa\in\cL C^\om(\ov\Om\times[0,1])$,  $\Gaa_{a,b}\in
\cL C^\om(\ov{\Om_a}\times
[0,1])$, $\{b^\la\}\in\cL C^\om([0,1])$, and $ f\circ\gaa
\in\cL C^\om(\pd\Om\times[0,1])$.
Then $u^\la\circ\Gaa^\la(z)$ is in $\cL C^\om(\ov\Om\times[0,1])$ for {\rm($i$)} and {\rm($iii$)}, and
$u^\la\circ\Gaa^\la\circ\iota_a(z)$ is in $\cL C^\om(\ov{\Om_a}\times[0,1])$ for {\rm($ii$)} and {\rm($iv$)}.
\eppp
\end{thm}
\begin{proof}   For the smoothness in parameter, we need to compute
the coefficients in the solution formulae.
We recall  results from \rp{kern}.
We have $e_i=1$ on $\gaa_i$   and $e_i=0$ on $\pd\Om^\la\setminus\gaa^\la_i$
for $i>0$, and   $e_0=1$ on $\pd\Om^\la$. Also
 $(\int_{\gaa_j^\la}\phi_i^\la\, d\sigma^\la)_{1\leq i,j\leq m}=I$,
 $\int_{\pd\Om^\la}\phi_0^\la e_0\, d\sigma^\la=1$ and $\phi_0=0$
on $\gaa_i^\la$ for $i>0$.
We also know that, on $\pd\Om^\la$, $W^\la_+\phi_0$ is constant and $W^\la_-\phi_i$
 are locally constant for $i>0$. On $\pd\Om^\la$ and for
$i>0$, we have
$$
W^\la_-\phi_i=\sum_{j>0}\nu_{ij}^\la e_j, \quad \nu_{ij}^\la
=W^\la_-\phi_i|_{\gaa_j^\la}, \quad\det(\nu_{ij}^\la )_{1\leq i,j\leq m}\neq0.
$$
(The latter needs $m>0$.) Thus   for $j>0$ we have
 $e_j=\sum_{i=1}^m \mu_{ji}W^\la_-\phi_i$. By \rp{kl3} a), we know that
 $\phi_0,\phi_1,\ldots, \phi_m$ are in $\cL B^{k+\all,j}
(\pd\Om_\gaa)$.  Thus,
  $\nu_{il}$ and   $\mu_{il}$ are in $\cL C^j([0,1])$.   Let
   $c_i^\la=\int_{\pd\Om^\la} f^\la\phi_j^\la\, d\sigma^\la$. Then $c_i\in\cL C^j([0,1])$
   and
$$
f^\la=g^\la+c_1^\la e_1+\cdots+c_m^\la e_m, \  g^\la\perp\ker(I+\cL K^{\la*});
\quad  f^\la=g^\la+c_0^\la,
 \  g^\la\perp \ker(I-\cL K^{\la*}).
$$
It is clear that  $g_i\in\cL C_*^{0,j}(\pd\Om_\gaa)$.
By \rp{kern} (i)  and \rl{kl2} c), we get $\var\in\cL C_*^{0,j}(\pd\Om_\gaa)$ for (i)-(iv).

For (i) and (ii) with $f\in\cL B^{l+\all,j}(\pd\Om_\gaa)$ and $l\leq k+1$, we still   have
 $g\in\cL B^{l+\all,j}(\pd\Om_\gaa)$ as $f-g\in\cL C^{\infty,j}(\pd\Om_\gaa)$.
 Thus, $\var\in\cL B^{l+\all,j}(\pd\Om_\gaa)$
  by \rp{kl3} c). Hence,
    $U_+\var=2\RE\cL C\var\in\cL B^{l+\all,j}(\ov{\Om}_\Gaa)$ by \rp{c0pa} and $U_-\var=
  2\RE\cL C\var  \in\cL B^{l+\all,j}(\ov{\Om'}_\Gaa)$
  by \rp{c0pa+}. Also, $W_+\phi_i\in\cL B^{k+1+\all,j}(\ov{\Om}_\Gaa)$ by \rp{c0pa} and
  $W_-\phi_i\in\cL B^{k+1+\all,j}(\ov{\Om'}_\Gaa)$ by \rp{c0paU+++}. The coefficients
  $c_i,\mu_{il}$ in \re{idpf} are in $\cL C^{\infty,j}$. We conclude that
   $u\in\cL B^{l+\all,j}(\ov{\Om}_\Gaa)$ for (i) and   $u\in\cL B^{l+\all,j}(\ov{\Om'}_\Gaa)$
   for (ii).

 For (iii) and (iv) with $f\in\cL B^{k+\all,j}$, we get
 $\var\in\cL B^{k+\all,j}(\pd\Om_\gaa)$ by \rp{kl3} b). Hence, $W_+\var\in
 \cL B^{k+1+\all,j}(\ov{\Om}_\Gaa)$ by \rp{c0pa} and
  $W_-\var\in\cL B^{k+1+\all,j}(\ov{\Om'}_\Gaa)$ by \rp{c0paU+++}.

Finally, the real analytic results follow from \rp{kl3an}, \rp{c0paU+++}, \rp{c0pa},
 \rp{c0pa+}, and
the solutions formulae of the Dirichlet and Neumann problems.
\end{proof}

\co{countex} Let $k\geq0 $ be an integer. Let $0<\beta<\all<1$.
 Let $\Om$ be a bounded domain with $\pd\Om\in\cL C^{k+1+\all}$.
Let $f\in\cL C^{k+1+\beta}(\pd\Om)\setminus\cL C^{k+1+\all}(\pd\Om)$.
Then $Wf $
defines two harmonic functions on $\Om$ and $\Om'$,
  which have the same boundary value.
 $Wf|_{\pd\Om}$ is in $\cL C^{k+1+\beta}$, but not in $\cL C^{k+1+\all}$.
 Moreover, $Wf\in\cL C^{1-\e}(\cc)$ for any $\e>0$.
\eco
As observed in~\ci{BGR},  if the above   $Wf$ is in $\cL C^1(\cc)$ then \re{dnwf}
 implies that $f$ and
 $Wf$
are  zero.  It is trivial that if a continuous function which
is holomorphic on both sides of a real curve in the complex plane,
 the function is holomorphic near the curve.
The reader is referred to~\ci{BGR} where
regularities   for functions for two-sided almost complex structures  are in contrast to
  \nrc{countex}.

As a consequence of \rt{dnpb}, we have the following version of Kellogg's Riemann
mapping theorem with parameter.
\co{krm} Let $j,k$  be non negative integers or $\infty$ satisfying   $0\leq j\leq k$.
 Let $0<\all<1$.
 Let $\Om$ be a simply connected bounded domain in $\cc$ with $\pd \Om\in\cL C^{k+1+\all}$
and let $\Gaa^\la$ embed  $\ov\Om$ onto $\overline{\Omega^\lambda}$ and
satisfy  $\Gaa\in\cL C^{k+1+\all,j}(\ov{\Om})$ {\rm(}resp.
$\cL B^{k+1+\all,j}(\ov{\Om})${\rm)}.  There exist Riemann mappings $R^\la$  from
$\Om^\la$ onto $\D$ such that $\{R^\la\circ\Gaa^\la\}\in\cL C^{k+1+\all,j}(\ov\Om)$
{\rm(}resp.
$\cL B^{k+1+\all,j}(\ov{\Om})${\rm)}. Assume
further that $\pd\Om\in\cL C^\om$ and  $\Gaa\in\cL C^\om(\ov\Om\times[0,1])$.
Then the function $R^\la\circ\Gaa^\la(z)$ is real analytic on $\ov
\Om\times[0,1]$.
\eco
\begin{proof} The proof is standard for the non-parameter case. Since we need it for
next proof, we recall the construction.
Fix $a\in\Om$ and let $a^\la=\Gaa^\la(a)$. Let $u^\la(z^\la)$ be the harmonic function
 on $\Om^\la$ whose
boundary value is $-\log|z^\la-a^\la|$. Let $v^\la$ be the harmonic conjugate of
$u^\la$ on $\Om^\la$ with $v^\la(a^\la)=0$.
Then $z^\la\to
(z^\la-a^\la)e^{u^\la(z^\la)+iv^\la(z^\la)}$ is a Riemann mapping $R^\la$ sending
 $\Om^\la$ onto
$\D$. By \rt{dnpb}, we know that $u\in\cL C^{k+1+\all,j}(\ov\Om)$. Also $$v^\la(z^\la)=\int_{a^\la}^{z^\la}
 \Bigl(-\pd_{y^\la}u^\la(z^\la)\, dx^\la+\pd_{x^\la}u^\la(z^\la)\, dy^\la\Bigr),$$
where the path of integration is   any $\cL C^1$ curve of the form $(x^\la,y^\la)
=\Gaa^\la(\rho(t))$ with $\rho(0)=a$
and $\rho(1)=z$. Using the integral formula we can verify that $ v^\la
\in\cL C_*^{0,j}(\ov\Om)$.
Then $\pd_{x^\la}v^\la=-\pd_{y^\la}u^\la$ and $\pd_{y^\la}v^\la=\pd_{x^\la}y^\la$
imply that  $v$ is in
$\cL C^{k+1+\all,j}(\ov\Om)$. The same argument is valid for the real analytic case.
\end{proof}
We now turn to the proof of \rt{nrefl}, for which we need a third-order invariant.
\le{rmp} Let $\Om$ be a bounded simply connected domain with $\pd\Om\in\cL C^{2+\all}$.
 Assume that at $1$, $\pd\Om$
and $\pd\D$ are tangent and have the same exterior normal vector.
There exists a unique biholomorphism $S$ from $\ov\Om$ onto $\ov\D$
such that $S(1)=1$, $S'(1)=1$ and $S''(1)\in \rr$. Let $R$ be a Riemann
mapping from $\ov\Om$ onto $\pd\D$ with $R(1)=1$. Then
$S''(1)=R'(1)^{-1}\RE R''(1)+1-R'(1)$.
Assume further that $\pd\Om\in\cL C^{3+\all}$.
 Then
 at $1$
\aln
S'''&=(R')^{-1}\{R'''+3(1-R')R''+\f{3}{2}(1-R')^2R'\}\\
&\quad +\f{3}{2}(R')^{-2}(\IM R'')^2-3i\{(R')^{-1}\RE R''+(1-R')\}(R')^{-1}\IM R''.
\end{align*}
\ele
\begin{proof} Let $R$ be a Riemann mapping from $\ov\Om$ onto $\ov\D$ with $R(1)=1$.
The fractional
linear transformations that preserve $\D$ and $1$ are of the form
$$
L_a(z)=\f{1-\ov a}{1-a}\cdot\frac{z-a}{1-\ov az}, \quad |a|<1.
$$
 We have
\ga\nonumber
(L_a\circ R)'=\f{1-\ov a}{1-a}\cdot\f{1-|a|^2}{(1-\ov aR)^2}R',\\
(L_a\circ R)''=\f{1-\ov a}{1-a}\cdot\f{1-|a|^2}{(1-\ov aR)^2}\left(R''+
\f{2\ov a(R')^2}{1-\ov aR}\right).
\label{2der}
\end{gather}
Note that $R'(1)>0$. We have
 $R_1'(1)=1$ for $R_1=L_a\circ R$ with
 $$
a=\f{1-R'(1)}{1+R'(1)}.
 $$
  We   further
 determine $L_b$ under the restriction
  $1-|b|^2=|1-b|^2$, i.e. $b=\cos\theta(\cos\theta+i\sin\theta)$
with $\theta\in(-\pi/2,0)\cup(0,\pi/2]$. Thus we still have
  $(L_b\circ R_1)'(1)=1$. Then $R_1(1)=R_1'(1)=1$ imply that
$$
(L_b\circ R_1)''(1)=R_1''(1)-2i\cot \theta.
$$
Hence, there is a unique $\theta\in(-\pi/2,0)\cup(0,\pi/2]$ such that $(L_b\circ
 R_1)''(1)\in\rr$. At $1$,
$$
\f{2\ov aR'}{1-\ov aR}=1-R', \quad
\f{2\ov b }{1-\ov b}=-i\IM R_1''.
$$
Therefore,  $S$ equals $L_b\circ L_a\circ R$. By \re{2der}, we get at $1$
\aln
S''&=(L_b\circ R_1)''=\RE R_1''=(R')^{-1}\{\RE R''+(1-R')R'\}.
\end{align*}
Also $\IM R_1''(1)=R'(1)^{-1}\IM R''(1)$. Differentiating \re{2der}, we obtain at $1$
\aln
R_1'''&=(R')^{-1}\{R'''+3(1-R')R''+\f{3}{2}(1-R')^2R'\},\\
S'''&=(L_b\circ R_1)'''=R_1'''-2iR_1''\IM R_1''-\yt(\IM R_1'')^2-i\RE R_1''\IM
R_1''.
\end{align*}
Expressing $R_1''(1)$ and $ R_1'''(1)$ in $R'(1), R''(1)$ and $ R'''(1)$ yields the identity.
\end{proof}

\noindent
{\bf Proof of \rt{nrefl}.}
We need to find a family of embeddings $\Gaa^\la$ from $\ov\D$ onto $\ov{\Om^\la}$ satisfying
the following: {(a)} $\Gaa$ is in
$\cL C^{\infty}
(\ov\D\times[0,1])$ and real analytic at $(1,0)\in\ov\D\times[0,1]$,   {
(b)} for any family of Riemann mappings
from $\ov{\Om^\la}$ onto $\ov\D$, $R\circ\Gaa$ is not real analytic  at
$(1,0)\in\ov\D\times[0,1]$.

It is convenient
 not to use arc-length.
 Consider a $\cL C^\infty$ family of simply-connected bounded domains $\Om^\la$ bounded by
$$
\gaa(t,\la)=\rho(t,\la)e^{it}, \quad \rho(0,\la)=1=\rho(t,0),
$$
where $\rho$ is a positive $\cL C^\infty$ function satisfying $\rho(t+2\pi,\la)=\rho(t,\la)$.
To achieve the analyticity, we will require that $\rho-1$ vanishes near $t=0$ and $\la=0$.
 As complex valued functions, the outer unit
normal vector $\nu(t,\la)$ of $\pd\Om^\la$ is $-i\gaa'(t,\la)/{|\gaa'(t,\la)|}$. We have
\gan
k(s,t,\la)= \f{1}{\pi}\f{N(s,t)}{|\gamma(s,\la)-\gamma(t,\la)|^2}, \\
 N(s,t,\la)=\RE\{\ov{\nu(t,\la)}(\gaa(t,\la)-\gaa(s,\la))\}.
\end{gather*}
In the above and the
remaining computation, the derivatives are in $s,t$ variables only.
The derivatives in $\la$ at $\la=0$ are
indicated in the formal Taylor expansion about $\la=0$. For instance,
\gan
\gaa(t,\la)\sim \sum\gaa_n(t)\la^n,\quad \gaa_0(t)=e^{it}; \qquad k(s,t,\la)\sim
\sum k_n(s,t)\la^n.
\end{gather*}
 We will derive identities
for coefficients of formal power series in $\la$ and those identities are therefore valid
when they arise from $\cL C^\infty$ functions. We will also denote by $\rho_{(n)}^{(j)}(s)$
the collection of   $\pd_s^i\rho_l(s)$ with $i\leq j, l\leq n$ and  by $\rho_{(n)}$
the collection of   $\rho_l$ with $ l\leq n$.
 We will denote by $Q(\rho_{(n)}^{(j)})$ a function in $s$
and $t$
which depends on $\rho_{(n)}^{(j)}$  such that
\eq{pdstq}
|\pd_s^i\pd_t^{l-i}Q(\rho_{(n)}^{(j)})(s,t)|\leq C(n,j,l,|\rho_{(n)}|_{j+l})
\df C(|\rho_{(n)}|_{j+l}).
\eeq
 To simplify notation, the $Q$ might be different
when it reappears.

We express
\gan
\gaa'(t,\la)=ie^{it}(\rho(t,\la)-i \rho'(t,\la)),\quad\gaa(t,\la)-\gaa(s,\la)=
B(s,t,\la)(e^{it}-e^{is}),\\
B(s,t,\la)=\rho(s,\la)+(\rho(t,\la)-\rho(s,\la))(1-e^{i(s-t)})^{-1}.
\end{gather*}
Note that $B_0(s,t)=1=|\gaa'(t,0)|$. We also have
\aln
N(s,t,\la)&=\RE\Bigl\{\ov{\nu(t,\la)}(\gaa(t,\la)-\gaa(s,\la)))\Bigr\}=|e^{is}
-e^{it}|^2A(s,t,\la),\\
A(s,t,\la)&=|e^{is}-e^{it}|^{-2}\RE\Bigl\{\ov{\nu(t,\la)}(\gaa(t,\la)-\gaa(s,\la)
-i\gaa'(t,\la)(e^{i(s-t)}-1))\Bigr\}
\\ &\quad +|e^{is}-e^{it}|^{-2}(1-\cos(s-t))|\gaa'(t,\la)|.
\end{align*}
Therefore,
\eq{pdab}
|\pd_s^j\pd_t^{k-t}A_n(s,t)|+|\pd_s^j\pd_t^{k-t}B_n(s,t)|\leq C(|\rho_n|_{k+2}).
\eeq
It is clear that $A(s,t,\la)$, $B(s,t,\la)$ and $k(s,t,\la)=
A(s,t,\la)/{(\pi |B(s,t,\la)|^2)}$ are $\cL C^\infty$ in $(s,t,\la)$.
Using $B_0=1$, we compute derivatives of $k(s,t,\la)$ in $\la$ at $\la=0$. We find
   $k_0(s,t)=\f{1}{2\pi}$.
By \re{pdab} we get $k_n(s,t) =Q_n(\rho_{(n)}^{(2)})(s,t)$, which satisfies \re{pdstq}.
 We also have
$d\sigma(t,\la)=a(t,\la)\, dt$ with $a(t,\la)=|\gaa'(t,\la)|$. Then $a_0=1$
and $a_n=Q(\gaa_{(n)}')$.

Let $u^\la(z^\la)$ be the harmonic function on $\Om^\la$ with boundary value
$-\log|z^\la|$ on $\pd\Om^\la$.
To compute $u^\la$, set
  $f(s,\la)=-\log|\gaa(s,\la)|=-\log\rho(s,\la)$ and consider
$$
\var(s,\la)+\int_0^{2\pi}\var(t,\la)K(s,t,\la)a(t,\la)\, dt=f(s,\la).
$$
We have $f_0=0$ and $f_n(s)=-\rho_n(s)+Q(\rho_{(n-1)})(s)$.
  We obtain $\var_0=0$ and
\eq{varo}
\var_n(s)=-\yt\rho_n(s)+Q(\rho_{(n-1)}^{(2)})(s),\quad n>0.
\eeq
Recall that $\var$ is real-valued and
\aln
(U\var)(z,\la)&=\f{1}{\pi}\int_{\pd\Om^\la}   \var(s,\la)\pd_{\ta^\la}
\arg(\zeta^\la-z^\la)  \, d\sigma^\la=\RE\cL C^\la \var.
\end{align*}
 Let $z=r\in(-1,1)$. We get
\aln
\pd_r^j\pd_\la^n\cL C^\la\var(r^\la)&=\f{1}{2\pi i}\sum_{i=0}^{n-1}\binom{n}{i}
\int_{0}^{2\pi}\pd_\la^i(\var(s,\la))\pd_r^j\pd_\la^{n-i}
\Bigl\{\f{\pd_s\gaa(s,\la) }{\gaa(s,\la)-r^\la}\Bigr\}\, ds\\
&\quad +\f{1}{2\pi i}\pd_r^j
\int_{\pd\Om^\la}  (\pd_\la^{n}\var(s,\la))
 \f{d\zeta^\la}{\zeta^\la-r^\la}=I_1^\la(r^\la)+I_2^\la(r^\la).
\end{align*} We want to emphasize that  $\Gaa_n(z)$ is not determined by
$\rho_1,\ldots,\rho_n$.  Nevertheless, we want to show that, when restricted on the unit circle,
 $(U\var)_n$ and all derivatives $(\pd_r^i(U\var)_n)$
 depend only on $\rho_1,\ldots, \rho_n$. For $I_1$, we apply
 Stokes' theorem to transport all derivatives on the Cauchy kernel onto
 derivatives in $s$. After removing all derivatives on the Cauchy kernel, we set $\la=0$ and let
 $r\to1^-$. By $\var_0=0$, \re{varo} and a crude estimate on  orders of derivatives, we obtain
 $$
 I_1^\la(r^\la)=\cL C^0Q(\rho_{(n-1)}^{(n+j+2)})(1),\quad |I_1^\la(r^\la)|
 \leq C(|\rho_{(n-1)}|_{n+j+3}),
  \quad \la=0,\ r=1.
 $$
To compute  $I_2^\la$, we   express for $r\in(-1,1)$
\aln
&\pd_r
\int_{\pd\Om^\la}  f^\la(\zeta^\la)
 \f{d\zeta^\la}{\zeta^\la-r^\la}=
 \pd_rr^\la
\int_{\pd\Om^\la} \{ (\ov{\ta^\la}\pd_{\ta^\la})f^\la(\zeta^\la)\}
 \f{d\zeta^\la}{\zeta^\la-r^\la},\\
& \pd_r^j
\int_{\pd\Om^\la}  f^\la(\zeta^\la)
\f{d\zeta^\la}{\zeta^\la-r^\la}=
 (\pd_rr^\la)^j
\int_{\pd\Om^\la} \{ (\ov{\ta^\la}\pd_{\ta^\la})^jf^\la(\zeta^\la)\}
 \f{d\zeta^\la}{\zeta^\la-r^\la}\\
 &\hspace{10ex}
  +\sum_{i>1, l<j}\pd_r^ir^\la Q_{jl}(\pd_r^{(j-i)}r^\la)\int_{\pd\Om^\la}
   \{ (\ov{\ta^\la}\pd_{\ta^\la})^lf^\la(\zeta^\la)\}
 \f{d\zeta^\la}{\zeta^\la-r^\la}.
\end{align*}
Recall that
 $\gaa_0(s)=e^{it}$. Write $\gaa^\la(e^{it})=\gaa(t,\la)$.
  We further require that the extension $\Gaa^\la(z)$ of $\gaa^\la(z)$ satisfy
$\Gaa^0(z)=z$.  Thus at $(r,\la)=(1,0)$, we have $\pd_rr^\la=1$ and $\pd_r^jr^\la=0$
 for all $j>1$.
Set  $\la=0$,
let  $r\to1^-$ in $I_2^\la$ and apply  the jump formula for Cauchy transform
on the unit circle. We get
\aln
I_2^0(1)&=\f{1}{2\pi i}\int_0^{2\pi}\Bigl\{(-ie^{-is}\pd_s )^j\var_n(s)-
(-ie^{-it}\pd_s)^j\var_n(t))|_{t=0}\Bigr\}
 \f{ie^{is}ds}{e^{is}-1}\\
 &\quad +(\pd_t \cdot ie^{-it})^r\var_n(t)|_{t=0}\\
&=-\f{1}{4\pi i}\int_0^{2\pi}\Bigl\{(-ie^{-is}\pd_s)^j\rho_n(s)-(-ie^{-it}
\pd_t)^j\rho_n(t))|_{t=0}\Bigr\}
 \f{ie^{is}ds}{e^{is}-1}\\
 &\quad -\yt(-ie^{-it}\pd_t)^j\rho_n(t)|_{t=0}+\cL C^0Q(\rho_{(n-1)}^{(2+j)})(1)
 +Q(\rho_{(n-1)}^{(2+j)})\\
 &=-\f{(-1)^jj!}{4\pi i}\int_0^{2\pi}\rho_n(s)
 \f{ie^{is}ds}{(e^{is}-1)^{j+1}} +Q(\rho_{(n-1)}^{(3+j)}).
\end{align*}
Here $\cL C^0$ stands for the Cauchy transform on the unit circle.
Recall in notation \re{pdstq}, we have $|Q(\rho_{(n-1)}^{(3+j)})|\leq
C(|\rho_{(n-1)}|_{j+3})$. Here
 the second last identity is obtained via integration by parts under the
 additional conditions that $n>0$  and $\rho_n$ vanish
 near $s=0$.
Therefore, we get for $n>0$
\al\label{pdrju}
\pd_r^j(U\var)_n(1)=-\f{(-1)^jj!}{4\pi}\RE\int_0^{2\pi}\rho_n(s)
 \f{e^{is}ds}{(e^{is}-1)^{j+1}} +Q(\rho_{(n-1)}^{(n+6)}).
\end{align}

 We use the Riemann mapping $R^\la$ satisfying $R^\la(0)=0$ and $(R^\la)(1)=1$.
 Near $(z,\la)=(1,0)$, we have $\gaa^\la(z)=z$ and
 $$
 R(z,\la)=R^\la(\gaa^\la(z))=z e^{h^\la(z)},
 \quad h^\la(z)=u^\la(z)+iv^\la(z)-u^\la(1)-iv^\la(1).
 $$
 Here $v^\la$ is a harmonic conjugate of $u^\la=U\var$. Since $(U\var)_0=0$,
  then $(U\var)_0$ is
 identically zero. Hence $R_0(z)=z$.  At $z=1$, we have
\gan
R'=1+\pd_r u^\la,\quad
R''=(h^\la)''+((h^\la)')^2+2(h^\la)',\\
R'''=(h^\la)'''+3(h^\la)'(h^\la)''+((h^\la)')^3+3(h^\la)''+3((h^\la)')^2.
\end{gather*}
 We get
\ga\label{rrr}
(R')_0=1,\quad
(R'')_0=0,\quad
(R''')_0=0,\\  \RE R_n'''(1) =\pd_r^3(u^\la)_n(1)+3\pd_r^2(u^\la)_n(1)
+  Q(\rho_{(n-1)}^{(3)}).
\label{rrr1}\end{gather}
 By \rl{rmp}, there exists a unique  Riemann mapping $S^\la$  for $\pd\Om^\la$ that
  satisfies $$S^\la(1)=(S^\la)'(1)=1,\quad
  (S^\la)''(1)\in\rr.
  $$
Thus,
$(S^\la)_n'''(1)=R_n'''(1)$ by \re{rrr}-\re{rrr1} and
the last identity in \rl{rmp}.  For $n>0$ we obtain
\aln
\RE R_n'''(1)&=\f{3!}{4\pi}\RE\int_0^{2\pi}\rho_n(s)
\Bigl(\f{e^{is}}{(1-e^{is})^4}+\f{e^{is}}{(1-e^{is})^3}\Bigr)\,
ds+  Q(\rho_{n-1}^{(n+6)} )\\
&=\f{3!}{4\pi}\int_0^{2\pi}\f{\rho_n(s)\cos(2s)}{16\sin^4(s/2)} \,
ds+  Q(\rho_{n-1}^{(n+6)} ).
\nonumber
\end{align*}
One can inductively choose $\rho_n(s)=\tilde\rho_n(s)\sin^4(s/2)\cos(2s)$ with
$\tilde\rho_n\geq0$
  such that $\rho_n(s)=0$ on $|s|<\pi/2$ and  $R_n'''(1)>(n!)^2$ for $n>0$.
This shows that $(S^\la)'''(1)$ is not real analytic at $\la=0$, provided that
$\rho_n(s)$ can be realized
via a family of embeddings $\Gaa^\la$ satisfying all the requirements. To
achieve the latter, we apply a non-parameter
version of \rl{whit} to the unit disc $\D$ and find
$\tilde\rho_n\in\cL C^\infty(\ov\D)$ such that $\tilde\rho_n(e^{is})=\rho_n(s)$.
Moreover, all $\tilde\rho_n$
vanish in a fixed neighborhood of $1\in\ov\D$. Applying \rl{whit+}, we find $\tilde
\rho\in\cL C^\infty(\ov\D\times[0,1])$
such that $\tilde\rho(z,\la)$ vanishes near $(z,\la)=(1,0)$ and
 $\pd_\la^{n-1}\tilde\rho(z,\la)=(n-1)!\tilde\rho_n(z)$ at $\la=0$.
 Let $\Gaa(z,\la)=(1+\la\tilde\rho(z,\la))z$.
As we already mentioned, we can
extend $\rho(t,\la)$ to be identically $1$ near $(1,0)\in\ov\D\times I$.
Thus $\Gaa(z,\la)$ is real analytic
near $(1,0)$.
Replacing $\Gaa^\la$ by $\Gaa^{\del\la}$ if necessary, $\Gaa^\la$ embeds
$\ov\D$ into $\ov{\Om^\la}$,
when $\del>0$ is sufficiently small and $0\leq\la\leq1$.

We now consider any family of Riemann mappings $R^\la$ from $\Om^\la$ onto $\D$. Assume
for the sake of contradiction that $R$ is real analytic at $(1,0)\in\ov\D\times[0,1]$.
 Replace $R^\la$
by $\ov {R^\la(1)}R^\la$. By \rl{rmp},
$(S^\la)'''(1)$
is real analytic at $\la=0$,  which is a contradiction.
\hfill $\square$

\medskip

We conclude the paper with a remark when
the domains are fixed and only the boundary values vary with a parameter. In this case
we can reduce
the solutions to the case without   parameter.
Recall that the solution for the Dirichlet and Neumann problems consists of
solving the integral equations and estimating  the simple and double layer potentials
via Cauchy transform.
When we differentiate integral equations or Cauchy transform in parameter $\la$,
 the kernels are unchanged for fixed domains. The difficulties with the chain
rule in our arguments disappear.
More specifically, the estimates for
  the integral equations in \rp{kl3} (without restriction $k\geq j$) extend to spaces
  of types $\cL B_*$
and $\cL C_*$. The
estimates on the layer potentials via Cauchy transform   in \rp{c0pa} (without restriction
$k\geq j$) extend
to spaces of types $\cL B_*$
and $\cL C_*$ too. Thus, we have the following.
\pr{easy} Let $k,j$ and $l$ be non negative integers. Assume that $l\leq k+1$ and $0<\all<1$.
Let $\Om$ be a bounded domain in the complex plane with $\pd\Om\in\cL C^{k+1+\all}$.
 Let $u^\la$ be harmonic functions on $\Om$ which
 are continuous up to   boundary. If $u\in\cL B^{l+\all,j}_*(\pd\Om)$
  {\rm(}resp. $\cL C_*^{l+\all,j}(\pd\Om)${\rm)}, then
 $u\in\cL B^{l+\all,j}_*(\ov\Om)$ {\rm(}resp. $\cL C_*^{l+\all,j}(\ov\Om)${\rm)}.
 If $\int_{\pd\Om}u^\la\, d\sigma=0$
 and $\{\pd_\nu  u^\la\}$ is in
 $\cL B^{k+\all,j}_*(\pd\Om)$ {\rm(}resp. $\cL C_*^{k+\all,j}(\pd\Om)${\rm)}, then
 $u\in\cL B^{k+1+\all,j}_*(\ov\Om)$ {\rm(}resp. $\cL C_*^{k+1+\all,j}(\ov\Om)${\rm)}.
\epr

%%%%%%%%%%%%%%%%

\newcommand{\Tsfini}{\bibitem{Tsfini} M. Tsuji,
{\it Potential theory in modern function theory},
 Maruzen Co., Ltd., Tokyo, 1959.
 }

 \newcommand{\Wathtw}{\bibitem{Wathtw} S.E. Warschawski,
{\it \"Uber einen Satz von O.D. Kellogg},
G\"ottinger Nachrichten, Math.-Phys. Klasse, 1932, 73-86.
 }

 \newcommand{\Wathtwb}{\bibitem{Wathtwb}\bysame,
 {\it \"Uber das Randverhalten der Ableitung der Abbildungsfunktion bei konformer Abbildung},
 Math. Z. {\bf 35}(1932), 321-456.}

\newcommand{\Befise}{\bibitem{Befise}L. Bers,
Riemann Surfaces (mimeographed lecture notes), New York
University, (1957-1958). }

\newcommand{\Vesitw}{\bibitem{Vesitw}
I.N. Vekua,  {\em Generalized analytic functions},
 Pergamon Press, London-Paris-Frankfurt; Addison-Wesley
Publishing Co., Inc., Reading, Mass. 1962. }

\newcommand{\Fonifi}{\bibitem{Fonifi} G.B. Folland, {\it
Introduction to partial differential equations},
second edition. Princeton University Press, Princeton, NJ, 1995.
}

\newcommand{\Kezeei}{\bibitem{Kezeei} O.D. Kellogg, {\it
Potential functions on the boundary of their regions of definition},
  Trans. Amer. Math. Soc.  {\bf 9}(1908),  no.~1, 39--50.
}

\newcommand{\Kezeeib}{\bibitem{Kezeeib}\bysame, {\it
Double distributions and the Dirichlet problem},
Trans. Amer. Math. Soc.  {\bf 9}(1908),  no.~1, 51--66.
}

\newcommand{\Keontw}{\bibitem{Keontw} \bysame, {\it
Harmonic functions and Green's integral},
  Trans. Amer. Math. Soc.  {\bf 13}(1912),  no.~1, 109--132.
}

\newcommand{\Ketwni}{\bibitem{Ketwni} \bysame, {\it
Foundations of potential theory},
 Reprint from the first edition of 1929.
 Die Grundlehren der Mathematischen Wissenschaften,
Band 31 Springer-Verlag, Berlin-New York 1967.}

\newcommand{\Plzefo}{\bibitem{Plzefo}
J. Plemelj, {\em \"Uber lineare Randwertaufgaben der Potentialtheorie},
  I. Teil.  Monatsh. Math. Phys.  {\bf 15}(1904),  no. 1, 337--411.
}

\newcommand{\BGR}{\bibitem{BGR} F. Bertrand, X. Gong and J.-P. Rosay,
{\it Common boundary values of holomorphic functions for two-sided complex structures},
submitted.
}

\newcommand{\Mirseze}{\bibitem{Mirseze}
C. Miranda, {\it
Partial differential equations of elliptic type},
Second revised edition.
Translated from the Italian by Zane C. Motteler.
Ergebnisse der Mathematik und ihrer Grenzgebiete,
Band 2. Springer-Verlag, New York-Berlin 1970.}

\newcommand{\Miseze}{\bibitem{Miseze} S.G.~Mikhlin, {\em Mathematical Physisc:
an advanced course}, North Holland, Amsterdam, 1970.
}

\newcommand{\Honize}{\bibitem{Honize}
L. H\"ormander, {\em The analysis of linear partial differential
operators.
 I. Distribution theory and Fourier analysis.}
 Springer-Verlag, Berlin, 1990.}


\begin{thebibliography}{99}

\newcommand{\Whthfo}{\bibitem{Whthfo}
H. Whitney, {\it
Analytic extensions of differentiable functions defined in closed sets},
 Trans. Amer. Math. Soc. {\bf 36}(1934), no. 1, 63-89.
 }

\newcommand{\Ponitw}{\bibitem{Ponitw}Ch. Pommerenke,
{\it Boundary behaviour of conformal maps},
Grundlehren der Mathematischen Wissenschaften, 299. Springer-Verlag, Berlin, 1992.
}

\newcommand{\GTzeon}{\bibitem{GTzeon}
D. Gilbarg and N.S. Trudinger, {\em Elliptic partial differential equations of second order}.
Reprint of the 1998 edition. Classics in Mathematics. Springer-Verlag, Berlin, 2001.}

\BGR\Fonifi\GTzeon\Honize
\Kezeei\Kezeeib\Keontw\Ketwni
\Miseze
\Mirseze\Plzefo\Ponitw\Tsfini\Vesitw\Wathtw\Wathtwb\Whthfo
\end{thebibliography}
\end{document}